\documentclass[a4paper,12pt]{amsart}

\usepackage[headings]{fullpage}

\usepackage{amsfonts,graphics,amsmath,mathrsfs,amsthm,amscd,amssymb,latexsym,euscript,enumerate}
\usepackage{epsfig}
\usepackage{flafter}
\usepackage[all,cmtip,line]{xy}
\usepackage{array}
\usepackage[english]{babel}
\usepackage{overpic}
\usepackage{subfig}
\usepackage{multirow}
\usepackage{microtype}
\usepackage{wrapfig}
\usepackage{longtable}
\usepackage{supertabular}
\usepackage[shortlabels]{enumitem}
\usepackage{tikz}
\usetikzlibrary{positioning}
\usepackage{tikz-cd}
\usepackage{float}
\allowdisplaybreaks

\usepackage[dvipsnames,svgnames,table]{xcolor}
\usepackage{graphicx}
\usepackage{hyperref}
\hypersetup{
    colorlinks=true,
    linkcolor=blue,
    citecolor=blue,
    filecolor=blue,
    urlcolor=blue
}
\usepackage{cleveref}

\newtheorem{theorem}{Theorem}[section]
\newtheorem{lemma}[theorem]{Lemma}
\newtheorem{proposition}[theorem]{Proposition}

\newtheorem*{theorem*}{Theorem}

\theoremstyle{plain}
\newtheorem{corollary}[theorem]{Corollary}

\theoremstyle{definition}
\newtheorem{definition}[theorem]{Definition}
\newtheorem{definition-lemma}[theorem]{Definition-Lemma}

\numberwithin{equation}{section}

\newcommand{\R}{\mathbb{R}}
\newcommand{\Z}{\mathbb{Z}}
\newcommand{\N}{\mathbb{N}}
\newcommand{\Q}{\mathbb{Q}}
\newcommand{\OO}{\mathcal{O}}

\DeclareMathOperator{\ord}{ord}

\DeclareMathOperator{\Val}{Val}

\DeclareMathOperator{\Nef}{Nef}

\DeclareMathOperator{\Eff}{Eff}

\DeclareMathOperator{\Bigdiv}{Big}
\DeclareMathOperator{\Div}{Div}

\def\Pic{\operatorname{Pic}}

\def\Spec{\operatorname{Spec}}

\def\mult{\operatorname{mult}}

\def\Mov{\operatorname{Mov}}

\def\Supp{\operatorname{Supp}}
\def\Exc{\operatorname{Exc}}

\def\lct{\operatorname{lct}}

\usepackage{mathtools}

\DeclarePairedDelimiterX{\norm}[1]{\lVert}{\rVert}{#1}

\newcommand{\floor}[1]{\left\lfloor #1 \right\rfloor}
\newcommand{\ceil}[1]{\left\lceil #1 \right \rceil}

\title[]{Coupled Pklt Tuples and Varieties of Pklt Type}

\author[D. Kim]{Donghyeon Kim}
\author[D.-W. Lee]{Dae-Won Lee}
\address[Donghyeon Kim]{Department of Mathematics, Yonsei University, 50 Yonsei-ro, Seodaemun-gu, Seoul 03722, Republic of Korea}
\email{narimial0@gmail.com, whatisthat@yonsei.ac.kr}
\address[Dae-Won Lee]{School of Mathematics, Korea Institute for Advanced Study, 85 Hoegiro, Dongdaemun-gu, Seoul 02455, Republic of Korea}
\email{daewonlee@kias.re.kr}

\subjclass[2020]{14E30, 14B05, 14C20}
\date{\today}
\keywords{coupled potentially klt tuple, log canonical threshold, asymptotic multiplier ideal, birational Zariski decomposition, geometric generic fiber}

\begin{document}

\begin{abstract}
We introduce asymptotic multiplier ideal sheaves and log canonical thresholds associated with tuples of pseudoeffective divisors on a projective klt pair. We prove that the threshold of a coupled potentially klt tuple is computed by a quasi-monomial valuation. For varieties of potentially klt type, we prove that every big divisor admits a birational Zariski decomposition with semiample positive part. We also prove finite generation of multisection rings of big divisors and give a criterion for a variety of potentially klt type to be a Mori dream space.
\end{abstract}

\maketitle



\section{Introduction}

The concept of a potentially klt pair was introduced in \cite{CP} in the context of studying the potentially non-klt locus and Fano type varieties. The potential log discrepancy measures the singularities contributed by the negative part of the anticanonical divisor via Nakayama’s asymptotic order. This viewpoint has recently been developed further in several directions. In \cite{CJK}, adjoint asymptotic multiplier ideal sheaves for potential triples were introduced, and a Nadel type vanishing theorem was established. Moreover, given a potential triple $(X,\Delta,D)$, when $D$ admits a birational Zariski decomposition, one can associate a generalized pair as in \cite{CJL2}. Finally, \cite[Theorem 1.1 and Corollary 1.3]{CJKL} showed that the log canonical threshold of a potentially klt triple is determined by a quasi-monomial valuation, and used this valuative characterization in developing the anticanonical minimal model program.

The first author showed that Fano type is generically invariant provided the anticanonical volume remains constant on a Zariski dense subset of the base \cite[Theorem 1.2]{Kim25}. In addition, the present authors proved that geometric generic fibers are potentially klt for fibrations whose closed fibers are uniformly of log Calabi--Yau type \cite[Theorem 1.2]{KL25}. These statements concern the variation of volumes and asymptotic orders in families, see \cite[Theorem 1.1]{Jun26}. In this article, we extend the valuative framework to tuples.

Recall that a projective pair $(X,\Delta)$ with $-(K_X+\Delta)$ pseudoeffective is \emph{potentially klt}, or \emph{pklt} if there exists $\varepsilon>0$ such that $A_{X,\Delta}(E)-\sigma_E(-(K_X+\Delta))\geq \varepsilon$ for every prime divisor $E$ over $X$.

Let $(X,\Delta)$ be a projective klt pair, and let \(\mathbf{D}=(D_1,\dots,D_r)\) be a tuple of pseudoeffective \(\Q\)-Cartier \(\Q\)-divisors on $X$. For a valuation \(\nu\in \Val_X\), set $\sigma_{\mathbf{D}}(\nu)\coloneqq \sum_{i=1}^r \sigma_{\nu}(D_i)$. We say that \((X,\Delta,\mathbf{D})\) is \emph{coupled potentially klt}, or \emph{coupled pklt} if there exists \(\varepsilon>0\) such that $A_{X,\Delta}(E)-\sigma_{\mathbf{D}}(E)\geq \varepsilon$ for every prime divisor \(E\) over \(X\). We define the \emph{log canonical threshold} as
\[
\lct_{\sigma}(X,\Delta,\mathbf{D})\coloneqq \inf_{\nu\in \Val^{\ast}_X}\frac{A_{X,\Delta}(\nu)}{\sigma_{\mathbf{D}}(\nu)}.
\]
The function $\sigma_{\mathbf{D}}$ is the sum of the asymptotic orders of the divisors in the tuple. In general, $\sigma_{\mathbf{D}}$ does not coincide with the asymptotic order of their sum.

The following is the first main result.

\begin{theorem} \label{thrm:1}
Let $(X,\Delta)$ be a projective klt pair, and let $\mathbf D=(D_1,\cdots,D_r)$ be a tuple of pseudoeffective $\Q$-Cartier $\Q$-divisors on $X$. If \((X,\Delta,\mathbf D)\) is coupled pklt, then there exists a quasi-monomial valuation $\omega\in \mathrm{Val}^*_X$ that computes $\mathrm{lct}_{\sigma}(X,\Delta,\mathbf D)$.
\end{theorem}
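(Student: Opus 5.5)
We follow the strategy of \cite[Theorem 1.1]{CJKL} for a single divisor, replacing $\sigma_D$ by the sum $\sigma_{\mathbf D}=\sum_i\sigma(D_i)$ and the asymptotic multiplier ideal of one divisor by a ``tuple'' version $\mathcal J(X,\Delta,\lambda\cdot\mathbf D)$.

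\textbf{Step 1 (algebraization).} Fix an ample $A$. For $m$ divisible enough, consider the ideals $\mathfrak a_{i,m}=\mathfrak b(|mD_i+A|)$, or $\mathfrak b(|m(D_i+\frac1m A)|)$ suitably normalized. By Nakayama's theory, $\sigma_\nu(D_i)=\lim_{m}\frac1m\nu(\mathfrak a_{i,m})$, and the convergence is uniform after normalizing by $A_{X,\Delta}(\nu)$. This uniformity is where the coupled pklt hypothesis enters. It gives $A_{X,\Delta}(E)-\sigma_{\mathbf D}(E)\ge\varepsilon$ together with $A\ge c\cdot\mathrm{ord}$-type bounds, so a valuation with small $A_{X,\Delta}(\nu)/\sigma_{\mathbf D}(\nu)$ satisfies $A_{X,\Delta}(\nu)$ comparable to $\sigma_{\mathbf D}(\nu)$. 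Define the mixed graded-type object $\mathfrak a_\bullet=\prod_i\mathfrak a_{i,\bullet}$. Since $\nu$ is additive on products, $\nu(\mathfrak a_m)/m\to\sigma_{\mathbf D}(\nu)$. Then $\lct_\sigma(X,\Delta,\mathbf D)=\lct(X,\Delta;\mathfrak a_\bullet)$ in the sense of Jonsson--Musta\c{t}\u{a}. Here $\mathfrak a_\bullet$ is an honest graded sequence of ideals, because products of graded sequences are graded. The main point to check is that the inequality $\nu(\mathfrak a_{i,m})\le m\sigma_\nu(D_i)+C\,A_{X,\Delta}(\nu)$ holds with $C$ independent of $m$ and $\nu$. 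We expect this to follow from a subadditivity and Nadel-vanishing argument applied to $\mathcal J(\|mD_i\|)\subseteq\mathfrak b(|mD_i+A'|)$ for a fixed $A'$, in the spirit of \cite{CJK}.

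\textbf{Step 2 (existence of a minimizer).} Given Step 1, the threshold of a graded sequence of ideals on a klt pair is computed by some valuation. This is Jonsson--Musta\c{t}\u{a} (klt case due to Blum), using compactness of normalized sublevel sets $\{A_{X,\Delta}\le M\}$ in $\Val_X$ and lower semicontinuity of $\nu\mapsto\nu(\mathfrak a_\bullet)$. The pklt bound guarantees the infimum is positive and finite. It also keeps minimizing sequences, normalized by $A_{X,\Delta}=1$, within a compact set on which $\sigma_{\mathbf D}$ is bounded below. Hence a minimizer $\nu_0$ with $\sigma_{\mathbf D}(\nu_0)>0$ exists.

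\textbf{Step 3 (quasi-monomiality).} This is the main obstacle: showing some minimizer is quasi-monomial, which is open for general graded sequences. We would adapt \cite[Theorem 1.1]{CJKL} via Xu's argument (as in Xu's proof of quasi-monomiality for lc places of complements). Take $\nu_0$ computing the threshold $c$. Using Step 1, choose $m$ large and a general $\Q$-divisor $G\sim_\Q\frac{c}{m}\sum_i(\text{general member of }|mD_i+A|)$. Then $(X,\Delta+G)$ is lc near the center of $\nu_0$, and $A_{X,\Delta+G}(\nu_0)$ is tiny. By a small perturbation, using that the tuple condition is preserved by a sum of general members, one obtains an lc pair $(X,\Delta+G')$ with $\nu_0$ an lc place. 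Now take a dlt modification, and let $\omega$ be the quasi-monomial valuation on the log-smooth stratum containing the center of $\nu_0$, for instance its retraction $r(\nu_0)$. Retraction satisfies $A_{X,\Delta}(r\nu)\le A_{X,\Delta}(\nu)$ and $\nu(\mathfrak a)\le r\nu(\mathfrak a)$ is false in general; we instead use $r\nu(\mathfrak a)\ge$ the log-smooth-model approximation. By \cite[Lemma]{JM}-type inequalities, $\sigma_{\mathbf D}(r\nu_0)\ge\sigma_{\mathbf D}(\nu_0)-\delta$ with $\delta\to0$ as $m\to\infty$. A limiting argument inside a fixed finite-dimensional simplex, using continuity of $\sigma_{\mathbf D}$ on simplices of quasi-monomial valuations (it is a sum of convex, hence continuous, functions there), then produces a quasi-monomial $\omega$ with $A_{X,\Delta}(\omega)/\sigma_{\mathbf D}(\omega)=c$. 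Ensuring the simplex can be fixed independently of $m$ is the delicate point. We would fix it via the ACC/boundedness of complements, as in \cite{CJKL}.
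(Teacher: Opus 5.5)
Your Step 3 has a genuine gap, and it is missing the idea the paper actually uses. Perturbing to an lc pair $(X,\Delta+G')$ that has the minimizer $\nu_0$ as an lc place is circular. Every lc place of an lc pair is already quasi-monomial, so for an arbitrary minimizer $\nu_0$ obtained from compactness this step is the still-open strong form of the Jonsson--Musta\c{t}\u{a} conjecture. Retraction does not help either. It lowers both quantities, $A_{X,\Delta}(r\nu_0)\le A_{X,\Delta}(\nu_0)$ and, since $r\nu_0\le\nu_0$, also $\sigma_{\mathbf D}(r\nu_0)\le\sigma_{\mathbf D}(\nu_0)$, so the ratio is not controlled. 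Nor is boundedness of complements available for an arbitrary projective klt pair with a pseudoeffective tuple to fix the simplex.

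What you call open is not open in its weak form: by \cite[Theorem 1.1]{Xu20}, the lct of any graded sequence of ideals on a klt pair is computed by \emph{some} quasi-monomial valuation. The paper transfers this to $\sigma_{\mathbf D}$ through the valuation ideals $\mathfrak a_m(\nu_0)=\{f\in\OO_{X,x}\mid\nu_0(f)\ge m\}$. Any $\omega$ with $\omega(\mathfrak a_\bullet(\nu_0))=1$ satisfies $\omega\ge\nu_0$, hence $\sigma_{\mathbf D}(\omega)\ge\sigma_{\mathbf D}(\nu_0)$ and $A_{X,\Delta}(\omega)\ge\lambda\sigma_{\mathbf D}(\omega)\ge A_{X,\Delta}(\nu_0)$. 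So $\nu_0$ computes $\lct(X_x,\Delta_x;\mathfrak a_\bullet(\nu_0))$. A quasi-monomial minimizer $\omega$ of this threshold, supplied by Xu, then has $A_{X,\Delta}(\omega)=A_{X,\Delta}(\nu_0)$ and $\sigma_{\mathbf D}(\omega)\ge\sigma_{\mathbf D}(\nu_0)$, so it computes $\lct_\sigma(X,\Delta,\mathbf D)$.

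Steps 1 and 2 also fail as written. With $\mathfrak a_{i,m}=\mathfrak b(|mD_i+A|)$ you only get $\mathfrak a_{i,m}\mathfrak a_{i,n}\subseteq\mathfrak b(|(m+n)D_i+2A|)$. For globally generated $A$ this ideal contains $\mathfrak a_{i,m+n}$ rather than being contained in it. So $\prod_i\mathfrak a_{i,\bullet}$ is not a graded sequence, and neither the Jonsson--Musta\c{t}\u{a}/Blum existence theorem nor Xu's theorem applies to it. The paper instead uses the big perturbations $\mathbf D_\ell=(D_i+\frac{A}{\ell!})_i$. For these, $\mathfrak a_\bullet(\mathbf D_\ell)$ is honestly graded with $\nu(\mathfrak a_\bullet(\mathbf D_\ell))=\sigma_{\mathbf D_\ell}(\nu)$ (Lemma \ref{lem-eq}), and $\lambda_\ell\downarrow\lambda$ (Lemma \ref{lem:decreasing}); your uniform estimate $\nu(\mathfrak a_{i,m})\le m\sigma_\nu(D_i)+CA_{X,\Delta}(\nu)$ is never needed.

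The cost is that a minimizer for $\mathbf D$ must be obtained as a limit of minimizers for varying $\ell$. Then compactness is a real issue, since centers can move and $\nu_j(\mathfrak m)$ can tend to $0$, and the pklt bound does not address this. That is the role of Proposition \ref{thickening}. One localizes at the generic point $x$ of a component of $Z(\mathcal J_\sigma(X,\Delta;\lambda\|\mathbf D\|))$ and replaces $\mathfrak a_\bullet(\mathbf D_\ell)$ by $\mathfrak c_{m,\ell}=\sum_{i=0}^m\mathfrak b_{i,\ell}\mathfrak m_x^{p(m-i)}$. Its threshold lies in $[\lambda_\ell,(1+\varepsilon)\lambda_\ell]$, and its minimizers, normalized by $A_{X,\Delta}=1$, are centered at $x$ with $\nu(\mathfrak m_x)\ge\delta$, so they lie in a compact set. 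The limit $\nu_0$ computes $\lambda$ by the continuity of $\sigma_{\mathbf D}$ on $\{A_{X,\Delta}\le M\}$ (Lemma \ref{lem:div}).

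Also, coupled pklt gives $\lambda\ge1$ but not $\lambda<\infty$: if all $D_i$ are nef, then $\sigma_{\mathbf D}\equiv0$.
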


The proof follows the strategy of \cite[Theorem 1.1]{CJKL}, together with the continuity  Lemma \ref{lem:div} and the concavity of $\sigma_{\mathbf D}$ on cones of quasi-monomial valuations.

\begin{corollary}\label{coro}
Let $(X,\Delta)$ be a projective klt pair, and let $\mathbf D=(D_1,\cdots,D_r)$ be a tuple of pseudoeffective $\Q$-Cartier $\Q$-divisors on $X$. Then, $(X,\Delta,\mathbf D)$ is coupled pklt if and only if $\lct_{\sigma}(X,\Delta,\mathbf D)>1$.
\end{corollary}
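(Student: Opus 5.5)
The equivalence has two directions, and the plan is to obtain both from homogeneity in $\nu$ together with Theorem \ref{thrm:1}. Throughout I use that $A_{X,\Delta}$ and each $\sigma_\nu(D_i)$ are homogeneous of degree one under scaling $\nu\mapsto t\nu$. Hence so is $\sigma_{\mathbf D}$, and the ratio $A_{X,\Delta}(\nu)/\sigma_{\mathbf D}(\nu)$ is scale invariant. For a divisorial valuation $\nu=\ord_E$ this ratio is just $A_{X,\Delta}(E)/\sigma_{\mathbf D}(E)$. I use the convention that the ratio is $+\infty$ when $\sigma_{\mathbf D}(\nu)=0$. If $\sigma_{\mathbf D}\equiv 0$, then $\lct_\sigma=+\infty>1$, and the tuple is coupled pklt because $(X,\Delta)$ is klt. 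Indeed, klt gives a uniform bound $A_{X,\Delta}(E)\ge \varepsilon_0>0$ for all prime divisors $E$ over $X$, by taking a log resolution and computing the minimal log discrepancy. So I assume below that $\sigma_{\mathbf D}\not\equiv 0$.

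\emph{($\Leftarrow$).} Suppose $\lambda\coloneqq\lct_\sigma(X,\Delta,\mathbf D)>1$. Then $\sigma_{\mathbf D}(E)\le \lambda^{-1}A_{X,\Delta}(E)$ for every prime divisor $E$ over $X$. It follows that
\[
A_{X,\Delta}(E)-\sigma_{\mathbf D}(E)\ \ge\ (1-\lambda^{-1})A_{X,\Delta}(E)\ \ge\ (1-\lambda^{-1})\varepsilon_0 ,
\]
using the klt lower bound $\varepsilon_0$ from above. This is exactly the coupled pklt condition with $\varepsilon=(1-\lambda^{-1})\varepsilon_0$. If $\lambda=+\infty$, then $\sigma_{\mathbf D}(E)=0$ for all $E$, and the condition holds trivially.

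\emph{($\Rightarrow$).} Suppose $(X,\Delta,\mathbf D)$ is coupled pklt. By Theorem \ref{thrm:1} there is a quasi-monomial $\omega\in\Val^*_X$ computing $\lambda=\lct_\sigma$, so $A_{X,\Delta}(\omega)=\lambda\,\sigma_{\mathbf D}(\omega)$. If $\sigma_{\mathbf D}(\omega)=0$, then $\lambda=+\infty$ and we are done, so assume $\sigma_{\mathbf D}(\omega)>0$. The goal is to show $\lambda>1$. Coupled pklt only gives a strict inequality on divisorial valuations, and a bare infimum over divisors could still equal $1$; that is why the attained minimum at $\omega$ is needed. The next step is to pass from divisors to $\omega$.

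Write $\omega$ as a monomial valuation with weights $\alpha\in\R_{>0}^k$ on a log smooth model $(Y,E_1+\dots+E_k)$ over $X$. Rational weight vectors $\alpha'$ close to $\alpha$ give valuations that are positive multiples $c_{\alpha'}\ord_{F_{\alpha'}}$ of divisorial valuations. For these, coupled pklt gives $A_{X,\Delta}-\sigma_{\mathbf D}\ge \varepsilon\, c_{\alpha'}$. The difficulty is that $c_{\alpha'}$ may tend to $0$ as $\alpha'\to\alpha$, so the bound $\varepsilon c_{\alpha'}$ carries no information in the limit. To avoid this, I will instead use that $A_{X,\Delta}$ is linear in $\alpha$, while $\sigma_{\mathbf D}$ is concave and continuous on the simplicial cone of weights. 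This is the concavity of $\sigma_{\mathbf D}$ on quasi-monomial cones and Lemma \ref{lem:div}. The function $f=A_{X,\Delta}-\sigma_{\mathbf D}$ is therefore convex and continuous on the cone. It is positive on the rays through the coordinate vertices $e_j$, which are the divisorial valuations $\ord_{E_j}$. By convexity, $f(\alpha)\ge \sum \alpha_j f(e_j)>0$, since $\alpha$ is a positive combination of the $e_j$. Hence $A_{X,\Delta}(\omega)>\sigma_{\mathbf D}(\omega)$, that is, $\lambda>1$. (If convexity were unavailable, continuity along a sequence of rational $\alpha'\to\alpha$, normalized so that $A_{X,\Delta}=1$, would give only $\lambda\ge1$, which is not enough.)

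The main obstacle is this last step: it needs the convexity of $A_{X,\Delta}-\sigma_{\mathbf D}$ on each quasi-monomial cone, with the vertex divisors $E_j$ taken over $X$ so that coupled pklt applies to them.
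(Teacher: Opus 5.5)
Your ($\Leftarrow$) direction is fine and matches the paper's; the paper gets the uniform bound $A_{X,\Delta}(E)\ge 1/m$ from the Cartier index via Lemma \ref{lem-logdiscr}. The gap is in the last step of ($\Rightarrow$): your convexity inequality goes the wrong way. $A_{X,\Delta}$ is linear and $\sigma_{\mathbf D}$ is concave and homogeneous on the weight cone, so $\sigma_{\mathbf D}(\alpha)\ge\sum_j\alpha_j\sigma_{\mathbf D}(e_j)$. Hence
\[
f(\alpha)\le \sum_j \alpha_j f(e_j).
\]
A convex, positively homogeneous function is subadditive, so vertex values bound it only from \emph{above} in the interior, and positivity at the $\ord_{E_j}$ tells you nothing at $\alpha$. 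For instance, $A=x+y$ and $\sigma=c\sqrt{xy}$ with $c>2$ give $f(e_1)=f(e_2)=1$ but $f(1,1)=2-c<0$. A sublinear $f\ge0$ such as $|x-\sqrt2\,y|$ vanishes on an irrational ray while being positive at every nonzero rational point. So what excludes $\lambda=1$ is the \emph{uniform} bound $A_{X,\Delta}-\sigma_{\mathbf D}\ge a$ at all divisorial valuations, not positivity at the vertices alone.

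The missing idea, which is what the paper does, is to keep your correct observation that normalized rational approximants lose the constant, and to compare at a large scale instead of normalizing. Use convexity only to get local Lipschitz continuity: $f$ is finite and convex on the relative interior of the cone, so $|f(\beta)-f(\beta')|\le L\|\beta-\beta'\|$ near $\alpha$. By homogeneity, $|f(q\alpha)-f(p)|=q\,|f(\alpha)-f(p/q)|\le L\|q\alpha-p\|$ whenever $p/q$ lies in that neighborhood. By Dirichlet's simultaneous approximation, choose $q\ge1$ and $p\in\Z_{>0}^k$ with $p/q$ close to $\alpha$ and $\|q\alpha-p\|<a/(2L)$. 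The monomial valuation with integral weights $p$ equals $c\,\ord_F$ for a prime divisor $F$ over $X$ and an integer $c\ge1$. Coupled pklt then gives $f(p)=c\,f(\ord_F)\ge a$, so $q\,f(\alpha)=f(q\alpha)\ge a-a/2>0$. Thus $A_{X,\Delta}(\omega)>\sigma_{\mathbf D}(\omega)$ and $\lambda>1$.
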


The next corollary extends the generic fiber results in \cite[Theorem 1.2]{Kim25}.

\begin{corollary}\label{coro2}
Let $(X,\Delta)\to S$ be a projective flat family of klt pairs over an integral variety $S$, with $K_{X/S}+\Delta$ $\Q$-Cartier. Let $\mathbf D=(D_1,\cdots,D_r)$ be a tuple of big $\Q$-Cartier $\Q$-divisors on $X$. Suppose that there exists a Zariski dense subset $S'\subseteq S$ of closed points such that, for each $i$, the volume $\mathrm{vol}(D_{i,s})$ is constant for $s\in S'$. Let $\eta\in S$ be the generic point. If $(X_s,\Delta_s,\mathbf D_s)$ is coupled pklt for every $s\in S'$, then the geometric generic fiber $(X_{\overline{\eta}},\Delta_{\overline{\eta}},\mathbf D_{\overline{\eta}})$ is also coupled pklt.
\end{corollary}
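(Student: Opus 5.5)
We plan to follow the strategy of \cite[Theorem 1.2]{Kim25}, combining it with Corollary \ref{coro}. The idea is to reduce coupled pklt-ness of the geometric generic fiber to a uniform lower bound $\lct_{\sigma}(X_s,\Delta_s,\mathbf D_s)>1$ for very general $s\in S'$. This bound is then transported to $\overline{\eta}$ by comparing asymptotic orders on special and generic fibers.

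\textbf{Step 1 (semicontinuity of asymptotic orders).} First we replace $S$ by a suitable open subset (after a finite base change if needed), so that all fibers are klt pairs and each $D_i$ restricts to a big divisor on every fiber. Since $\mathrm{vol}(D_{i,s})$ is constant on the dense set $S'$, it agrees with $\mathrm{vol}(D_{i,\overline{\eta}})$ for very general $s\in S'$, because volume is constant for very general fibers and lower semicontinuous. We then invoke the argument of \cite[Theorem 1.1]{Jun26} and \cite{Kim25}: if the volumes agree, then for every divisorial valuation $E$ on $X_{\overline{\eta}}$ that specializes, after spreading out, to a family of divisorial valuations $E_s$, we have $\sigma_{E}(D_{i,\overline{\eta}})=\sigma_{E_s}(D_{i,s})$ for very general $s$. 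Log discrepancies are preserved under such spreading out, since the family of log resolutions is fiberwise a log resolution over an open set. Summing over $i$ gives $A_{X_{\overline\eta},\Delta_{\overline\eta}}(E)-\sigma_{\mathbf D_{\overline\eta}}(E)=A(E_s)-\sigma_{\mathbf D_s}(E_s)$.

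\textbf{Step 2 (reduction to the threshold).} Using Corollary \ref{coro}, it suffices to show $\lct_\sigma(X_{\overline\eta},\Delta_{\overline\eta},\mathbf D_{\overline\eta})>1$. Suppose instead that this threshold is at most $1$. Since $X_{\overline{\eta}}$ might not be coupled pklt, we cannot directly apply Theorem \ref{thrm:1}. Instead we use that the infimum is approximated by divisorial valuations. If the threshold is $<1$, some single divisor $E$ has $A(E)<\sigma_{\mathbf D}(E)$. If it equals $1$, we get a sequence $E_m$ with $A(E_m)-\sigma_{\mathbf D}(E_m)\to 0$ relative to $A(E_m)$.

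\textbf{Step 3 (contradiction).} Each such divisor over $X_{\overline\eta}$ is defined over a finite extension of the function field. It therefore spreads out over an étale neighborhood of the generic point, and by Step 1 it gives divisors $E_{m,s}$ on the fibers $X_s$ for very general $s\in S'$ with the same discrepancy data. The main obstacle is uniformity. Coupled pklt on each $X_s$ gives an $\varepsilon_s>0$ depending on $s$, while countably many $E_m$ force us to use very general $s$. To overcome this, we would first try to show that the threshold on $X_{\overline\eta}$ is achieved by a quasi-monomial valuation $\omega$ even in the borderline case $\lct_\sigma=1$. This should follow by running the compactness argument of Theorem \ref{thrm:1}, which only needs $\lct_\sigma\ge$ some positive constant, valid because $D_i$ is big and $(X_{\overline{\eta}},\Delta_{\overline{\eta}})$ is klt. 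We would then approximate $\omega$ inside a single simplicial cone $\mathrm{QM}(Y,E)$ of a log resolution. All rational points of this cone spread out together over one open set, so a single very general $s\in S'$ suffices. The concavity of $\sigma_{\mathbf D}$ on this cone, together with Lemma \ref{lem:div}, gives continuity, so the equality $A(\omega)=\sigma_{\mathbf D}(\omega)$ transfers to a quasi-monomial valuation $\omega_s$ on $X_s$ with $A(\omega_s)\le\sigma_{\mathbf D_s}(\omega_s)$. This contradicts $\lct_\sigma(X_s,\Delta_s,\mathbf D_s)>1$ from Corollary \ref{coro}, and hence $(X_{\overline\eta},\Delta_{\overline\eta},\mathbf D_{\overline\eta})$ is coupled pklt.
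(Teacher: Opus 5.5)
There is a genuine gap in how you move information from $X_{\overline\eta}$ to the fibers $X_s$ with $s\in S'$. Every transfer in your argument requires $s$ to lie outside a countable union of proper closed subsets of $S$. This includes the identification $\mathrm{vol}(D_{i,s})=\mathrm{vol}(D_{i,\overline\eta})$ in Step 1, the equality $\sigma_E(D_{i,\overline\eta})=\sigma_{E_s}(D_{i,s})$, and the ``single very general $s\in S'$'' in Step 3. But $S'$ is only a Zariski dense set of closed points. It may be countable and contained in such a union, and over a countable field such as $\overline{\Q}$ there are no very general closed points at all. So there may be no $s\in S'$ where your equalities hold, and the coupled pklt hypothesis on $S'$ is never actually used. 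Note also that equality of $\sigma$ at very general $s$ holds with no assumption on volumes, so your argument never uses the volume hypothesis, which signals that something is missing. (Separately, $s\mapsto \mathrm{vol}(D_{i,s})$ can only jump up under specialization, by upper semicontinuity of $h^0$, so ``lower semicontinuous'' is the wrong direction.)

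The missing idea is that you only need a one-sided inequality, and you can get it at every $s\in S'$ over one fixed dense open set.

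First, constancy of $\mathrm{vol}(D_{i,s})$ on the dense set $S'$ gives $\mathrm{vol}(D_{i,\overline\eta})=\mathrm{vol}(D_{i,s})$ for all $s\in S'$. This is a nontrivial input (the paper cites \cite[Theorem 1.1]{Jun26}), not a consequence of constancy at very general points.

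Second, let $E$ be a divisor obtained by a toroidal blowup of the spread-out log smooth model. If $a>\sigma_{E_s}(D_{i,s})$, then
\[
\mathrm{vol}(f^*D_{i,\overline\eta}-aE_{\overline\eta})\le \mathrm{vol}(f^*D_{i,s}-aE_s)<\mathrm{vol}(D_{i,s})=\mathrm{vol}(D_{i,\overline\eta}).
\]
The first inequality is semicontinuity of $h^0$, and the second is the volume characterization of $\sigma_E$ in \cite[Proposition 2.1]{FKL16}. Applying that characterization once more gives $\sigma_{E_{\overline\eta}}(D_{i,\overline\eta})\le\sigma_{E_s}(D_{i,s})$. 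Since all toroidal blowups of the cone are defined over the same open subset of $S$, this holds simultaneously for all divisorial points of the cone and all $s\in S'$ in that open set.

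Continuity then gives $\sigma_{\mathbf D_{\overline\eta}}(\omega)\le\sigma_{\mathbf D_s}(\omega_s)$. Together with $A(\omega)=A(\omega_s)$, this is exactly what your Step 3 needs.

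The rest of your outline matches the paper's proof: a quasi-monomial minimizer on $X_{\overline\eta}$, spreading out a single simplicial cone, and comparison with Corollary \ref{coro} on $X_s$. Your remark that the proof of Theorem \ref{thrm:1} only needs $0<\lct_\sigma<\infty$ is correct and necessary. It is what justifies applying it on $X_{\overline\eta}$ before coupled pklt is known, and the paper does this without comment.
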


The remaining results deal with varieties of pklt type. A normal projective variety \(X\) is said to be of \emph{potentially klt type} (\emph{pklt type}) if there is an effective \(\Q\)-divisor \(\Delta\) for which \((X,\Delta)\) forms a potentially klt pair. As a first consequence, any big $\Q$-Cartier divisor on a variety of pklt type has a birational Zariski decomposition whose positive part is semiample.

\begin{theorem}\label{thm:BZD}
    Let $X$ be a projective variety of pklt type. Then every big $\Q$-Cartier $\Q$-divisor on $X$ admits a birational Zariski decomposition with semiample positive part.
\end{theorem}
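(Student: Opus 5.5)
The plan is to reduce to the existence of a good minimal model for a suitable klt pair of log general type, and then read off the decomposition from that model. Let $(X,\Delta)$ be a pklt pair witnessing that $X$ is of pklt type, and let $D$ be a big $\Q$-Cartier $\Q$-divisor on $X$.

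\textbf{Step 1: a klt complement-type divisor.} Since $(X,\Delta)$ is pklt, $-(K_X+\Delta)$ is pseudoeffective, and $A_{X,\Delta}(E)-\sigma_E(-(K_X+\Delta))\ge\varepsilon$ for every prime divisor $E$ over $X$. Take the $\sigma$-decomposition $-(K_X+\Delta)=N_\sigma+P_\sigma$. Perturbing $P_\sigma$ by a small multiple of $D$ gives a big class
\[
L_t \coloneqq P_\sigma+tD
\]
for small rational $t>0$. Choose a general effective $\Q$-divisor $G_t\sim_\Q L_t$ whose asymptotic orders along every $E$ are close to $\sigma_E(L_t)\le \sigma_E(P_\sigma)+t\,\sigma_E(D)$. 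Then set
\[
\Gamma_t\coloneqq \Delta+N_\sigma+G_t ,
\]
so that $K_X+\Gamma_t\sim_\Q tD$. The pklt margin $\varepsilon$ is what makes $(X,\Gamma_t)$ klt for $t$ small. Concretely, $\sigma_E(-(K_X+\Delta))\ge \mult_E N_\sigma+\sigma_E(P_\sigma)$ does not hold in general, so I would instead use the approach of \cite{CP}/\cite{CJL2}: there is a $\Q$-divisor $\Delta'\ge\Delta$ with $(X,\Delta')$ klt and $-(K_X+\Delta')$ big, up to replacing by a small perturbation. In particular $X$ is then of Fano type after a small modification, or at least $(X,\Delta')$ is klt with $K_X+\Delta'+tD$ big. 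The clean route is therefore to produce a klt $\Delta'$ with $K_X+\Delta'\sim_\Q tD$ (or $\sim_\Q tD+$ an effective part avoiding trouble) using the bound $A-\sigma\ge\varepsilon$ together with the fact that asymptotic multiplier ideals of $-(K_X+\Delta)$ are trivial at scale $1+\delta$.

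\textbf{Step 2: run the MMP.} With a klt pair $(X,\Gamma)$ such that $K_X+\Gamma\sim_\Q tD$ is big, apply \cite{BCHM}: the log canonical ring is finitely generated and a good minimal model $\phi\colon X\dashrightarrow Y$ exists. On a common resolution $p\colon W\to X$, $q\colon W\to Y$ we obtain
\[
p^*(K_X+\Gamma)=q^*(K_Y+\Gamma_Y)+F,
\]
with $F\ge0$ and $K_Y+\Gamma_Y$ semiample. Dividing by $t$ gives $p^*D=P+N$, where $P=\tfrac1t q^*(K_Y+\Gamma_Y)$ is semiample, hence nef, and $N=\tfrac1t F$. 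Negativity of the MMP shows that $N=N_\sigma(p^*D)$, so this is a birational Zariski decomposition with semiample positive part.

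\textbf{Main obstacle.} The hard part is Step 1: converting the pklt condition, which controls only asymptotic orders, into an honest effective klt boundary $\Gamma$ with $K_X+\Gamma\sim_\Q tD$. The plan is to use the asymptotic multiplier ideal $\mathcal J(\|{-(1+\delta)(K_X+\Delta)}\|)$, which is trivial because $A-\sigma\ge\varepsilon$. A general member of $|m(-(K_X+\Delta)+\delta' A)|$, with $A$ ample and small $\delta'$ absorbed into $tD$ using bigness of $D$ (write $D\sim_\Q A+E_D$), then gives $\Gamma$ klt. The absorption and the klt check together will be the technical core.
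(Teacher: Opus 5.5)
Your overall architecture matches the paper's: realize a multiple of $D$ as a klt adjoint divisor $K_X+\Gamma$, take a good minimal model (BCHM plus the basepoint-free theorem), and read off $p^*D=P+N$ with $N\ge 0$ exceptional over the model. Step 2 is fine; you only need to replace ``$=$'' by ``$\sim_{\Q}$'' and absorb a principal $\Q$-divisor into $P$. The isomorphism on sections follows directly from $F$ being effective and $q$-exceptional.

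\textbf{The gap is in Step 1, at the point you yourself call the technical core.} You assert that $\mathcal{J}(\|-(1+\delta)(K_X+\Delta)\|)$ (after a small ample perturbation) is trivial ``because $A-\sigma\geq\varepsilon$''. This does not follow. The pklt condition is an \emph{additive} margin. It only gives
\[
\frac{\sigma_E(-(K_X+\Delta))}{A_{X,\Delta}(E)}\leq 1-\frac{\varepsilon}{A_{X,\Delta}(E)},
\]
and $A_{X,\Delta}(E)$ is unbounded as $E$ ranges over divisors over $X$. So this ratio may a priori tend to $1$, and no single $\delta>0$ makes $A_{X,\Delta}(E)-(1+\delta)\sigma_E(-(K_X+\Delta))$ positive for all $E$.

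What you need is the \emph{multiplicative} bound $\sup_{\nu}\sigma_\nu(-(K_X+\Delta))/A_{X,\Delta}(\nu)<1$. Equivalently, $(X,\Delta,-(1+\delta)(K_X+\Delta))$ must still be pklt for some $\delta>0$. This is \cite[Proposition 4.1]{CJKL}, the $r=1$ case of Corollary~\ref{coro}, and it is genuinely nontrivial. Its proof needs a quasi-monomial valuation computing the threshold (Theorem~\ref{thrm:1}). It then uses a convexity argument on the corresponding quasi-monomial cone: this produces divisorial valuations $\ord_F$ with $A_{X,\Delta}(F)-\sigma_F$ arbitrarily small, contradicting the margin $\varepsilon$. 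The paper invokes exactly this result in Proposition~\ref{prop:uniform-adjointization}.

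Once you have that input, your absorption step works and is essentially Proposition~\ref{prop:uniform-adjointization}. Let $a<1$ be the uniform ratio for $-(K_X+\Delta)$, and let $b<\infty$ be the ratio for $D$, which is finite by Lemma~\ref{lem:sup}. Then the big divisor $-(K_X+\Delta)+tD$ has ratio at most $a+tb<1$ for small $t$. So its asymptotic multiplier ideal is trivial, and a general member yields a klt $\Gamma$ with $K_X+\Gamma\sim_{\Q}tD$.

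Separately, drop the side claim that there is $\Delta'\geq\Delta$ with $(X,\Delta')$ klt and $-(K_X+\Delta')$ big, so that $X$ is of Fano type up to a small modification. This is false for varieties of pklt type in general. An abelian variety with $\Delta=0$ is pklt, but $-\Delta'$ is never big for effective $\Delta'$.
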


On a variety of pklt type, the log canonical threshold of a klt pair with a tuple of big divisors is computed by a divisorial valuation.

\begin{corollary}\label{cor:divisorial}
    Let \(X\) be a projective \(\Q\)-factorial variety of pklt type, \((X,\Delta)\) be a klt pair with $\Delta$ a $\Q$-divisor, and \(\mathbf{D}=(D_1,\dots,D_r)\) be a tuple of big \(\Q\)-Cartier \(\Q\)-divisors. Then
    \[
    \lct_{\sigma}(X,\Delta,\mathbf{D})\in \Q_{>0}\cup \{+\infty\}.
    \]
    Moreover, if $\lct_{\sigma}(X,\Delta,\mathbf{D})$ is finite, then the threshold is computed by a divisorial valuation.
\end{corollary}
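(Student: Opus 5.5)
Since $X$ is of pklt type, fix an effective $\Q$-divisor $\Delta_0$ with $(X,\Delta_0)$ pklt; we use this pair only to run an MMP, and the threshold is for the given $\Delta$. By Theorem \ref{thm:BZD}, each $D_i$ has a birational Zariski decomposition. Concretely, there are a log resolution $f\colon Y\to X$ and semiample $\Q$-divisors $P_i$ with $f^*D_i=P_i+N_i$ and $N_i\ge 0$ such that $\sigma_\nu(D_i)=\nu(N_i)$ for every $\nu\in\Val_X$. Taking a common resolution, we may assume the same $Y$ works for all $i$ and that $\bigcup_i\Supp N_i\cup \Supp f^{-1}_*\Delta\cup\Exc(f)$ is snc. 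Then
\[
\sigma_{\mathbf D}(\nu)=\nu(N),\qquad N\coloneqq \sum_{i=1}^r N_i ,
\]
so $\sigma_{\mathbf D}$ is the order function of a single effective $\Q$-divisor on $Y$. Writing $K_Y+\Delta_Y=f^*(K_X+\Delta)$, we get
\[
\lct_{\sigma}(X,\Delta,\mathbf D)=\inf_{\nu}\frac{A_{Y,\Delta_Y}(\nu)}{\nu(N)} ,
\]
and this is the ordinary log canonical threshold $\lct(Y,\Delta_Y;N)$ of the sub-pair $(Y,\Delta_Y)$ with respect to $N$.

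Because $(Y,\Delta_Y+N)$ is snc, the usual computation on one log resolution applies. The ratio $A/\nu(N)$ is minimized, and equals $\infty$ only when $N=0$, on the prime components $E$ of $\Delta_Y+N$ together with the $f$-exceptional divisors. We should check this directly rather than quote the effective case, since $\Delta_Y$ may have negative coefficients. For a quasi-monomial valuation with weights $\alpha_j$ on components $E_j$, both $A_{Y,\Delta_Y}(\nu)=\sum\alpha_j A(E_j)$ and $\nu(N)=\sum\alpha_j\ord_{E_j}N$ are linear in $\alpha$, with $A(E_j)>0$ because $(X,\Delta)$ is klt. A ratio of linear forms is minimized at a vertex. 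For arbitrary $\nu$, the standard retraction argument gives $A(\nu)\ge A(r_Y\nu)$ and $\nu(N)=r_Y\nu(N)$, since $N$ is snc-supported on $Y$. Hence the infimum is the minimum of the finitely many rational numbers $A_{X,\Delta}(E_j)/\ord_{E_j}N$, taken over $j$ with $\ord_{E_j}N>0$. These values are rational because $\Delta$ and the $N_i$ are $\Q$-divisors and the $E_j$ are prime divisors. So the threshold is positive and rational, it is attained by the divisorial valuation $\ord_{E_j}$, and it is $+\infty$ exactly when $N=0$.

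The main obstacle is the first step. We need every $D_i$ to have a birational Zariski decomposition in which the negative part computes $\sigma_\nu$ for all valuations, not only divisorial ones, and in which $N_i$ is a $\Q$-divisor. If Theorem \ref{thm:BZD} only gives a decomposition on some model $Y$ with $P_i$ semiample, we deduce the valuative statement as follows. Since $P_i$ is semiample, $\sigma_\nu(P_i)=0$. Also $\sigma_\nu(f^*D_i)=\sigma_\nu(D_i)$, and subadditivity gives $\sigma_\nu(D_i)\le\nu(N_i)$. For the reverse inequality we use that every member of $|mf^*D_i|$ contains $mN_i$. Rationality of $N_i$ should follow because the decomposition arises from an MMP with scaling on $\Q$-divisors. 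Here $\Q$-factoriality is needed to run the $D_i$-MMP for $(X,\Delta_0+\epsilon D_i)$, which we expect gives the Mori dream–type finiteness used in Theorem \ref{thm:BZD}.
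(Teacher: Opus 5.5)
Your proposal is correct and follows essentially the same route as the paper. Both arguments pass to a common model carrying birational Zariski decompositions $f^*D_i=P_i+N_i$ with $\Supp(\Delta_Y)\cup\Supp(N)$ snc, identify $\sigma_{\mathbf D}(\nu)=\nu(N)$ (your ``main obstacle'' paragraph is exactly the paper's Lemma~\ref{lem-bzd}), and compute $\lct(Y,\Delta_Y;N)$ as the minimum of $A_{Y,\Delta_Y}(F)/\mult_F N$ over components $F$ of $N$; your retraction argument just spells out the snc step the paper leaves implicit. Your worry about rationality of $N_i$ is unnecessary, since $N_i=f^*D_i-P_i$ with $P_i$ a $\Q$-divisor by definition.
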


The multisection ring is finitely generated for any tuple of big divisors on a variety of pklt type.

\begin{theorem}\label{thm:fg}
Let $X$ be a projective variety of pklt type, and let $D_1,\dots,D_r$ be big $\Q$-Cartier \(\Q\)-divisors on $X$. Then the multisection ring
\[
R(X;D_1,\dots,D_r)
\coloneqq
\bigoplus_{(m_1,\dots,m_r)\in\N^r}
H^0\!\left(X,\sum_{i=1}^r m_iD_i\right)
\]
is finitely generated.
\end{theorem}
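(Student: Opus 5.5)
The plan is to reduce to the finite generation of adjoint rings in \cite{BCHM} (Corollary 1.1.9 there). To do this, I will show that, after rescaling, each $D_i$ is $\Q$-linearly equivalent to $K_X+\Delta_i$ for a klt pair $(X,\Delta_i)$ with $\Delta_i$ big. Rescaling is harmless for two reasons. First, for a fixed positive integer $k$, the ring $R(X;kD_1,\dots,kD_r)$ is a Veronese-type subring of $R(X;D_1,\dots,D_r)$, and finite generation passes between the two; this uses that the integral closure is finite and that the ring is integral over the subring. Second, replacing $D_i$ by a $\Q$-linearly equivalent divisor changes the ring only by an isomorphism, up to passing to a common Veronese subring. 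So it is enough to prove finite generation of $R(X;\varepsilon D_1,\dots,\varepsilon D_r)$ for one fixed rational $0<\varepsilon\ll1$.

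First, fix $\Delta$ with $(X,\Delta)$ pklt. By Corollary \ref{coro} with $r=1$ and $\mathbf D=(-(K_X+\Delta))$, we get $\lct_\sigma(X,\Delta,-(K_X+\Delta))>1$. Choose a rational $\delta>0$ with $(1+\delta)\,\sigma_E(-(K_X+\Delta))< A_{X,\Delta}(E)$ uniformly, that is, $A_{X,\Delta}(E)-(1+\delta)\sigma_E \ge \delta' >0$ for all prime divisors $E$ over $X$.

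Next, since $D_i$ is big, write $D_i\sim_\Q A_i+E_i$ with $A_i$ ample and $E_i\ge0$. Then
\[
\varepsilon D_i-(K_X+\Delta)\sim_\Q \bigl(-(K_X+\Delta)+\varepsilon A_i\bigr)+\varepsilon E_i .
\]
The divisor $B_i:=-(K_X+\Delta)+\varepsilon A_i$ is big, and $\sigma_E(B_i)\le \sigma_E(-(K_X+\Delta))$ for every $E$.

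The key step, and the one I expect to be the main obstacle, is to produce an effective $G_i\sim_\Q B_i$ for which $(X,\Delta+G_i)$ is klt. What I will try first goes through asymptotic multiplier ideals. The uniform gap $A_{X,\Delta}-(1+\delta)\sigma\ge\delta'$, combined with $\sigma(B_i)\le\sigma(-(K_X+\Delta))$, should show that $\mathcal J\bigl(X,\Delta,(1+\delta)\|B_i\|\bigr)=\OO_X$. The point is that for $B_i$ big, $\mathcal J(\|B_i\|)$ is computed on a single log resolution of $|mB_i|$ for $m$ divisible, and the orders $\frac1m\ord_E|mB_i|$ tend to $\sigma_E(B_i)$; the extra factor $1+\delta$ absorbs the finitely many exceptional divisors on that resolution where the convergence is not yet attained. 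Once triviality holds, a general member $G_i\in\frac1m|mB_i|$ for $m$ sufficiently divisible gives $(X,\Delta+(1+\delta)G_i)$ klt, and in particular $(X,\Delta+G_i)$ is klt with some room to spare. If the resolution-dependence becomes delicate, I would instead apply Theorem \ref{thrm:1} to the tuple $\bigl((1+\delta)B_i\bigr)$ to obtain a quasi-monomial valuation and argue by its discreteness.

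Finally, I choose $\varepsilon$ so small that $\varepsilon E_i$ does not destroy the klt condition; this is possible because klt is an open condition in the coefficients on a fixed log resolution of $\Delta+G_i+E_i$. Set $\Delta_i:=\Delta+G_i+\varepsilon E_i$. Then $(X,\Delta_i)$ is klt, $\Delta_i$ is big because $G_i$ is big, and $\varepsilon D_i\sim_\Q K_X+\Delta_i$. If \cite{BCHM} requires $\Q$-factoriality, I first pass to a small $\Q$-factorialization $\pi\colon X'\to X$, which exists since $X$ is klt-type, and pull back the $D_i$ and the $\Delta_i$; section rings are unchanged under small birational maps. Applying \cite[Corollary 1.1.9]{BCHM} then gives finite generation of $R(X;K_X+\Delta_1,\dots,K_X+\Delta_r)$, and hence of $R(X;D_1,\dots,D_r)$ by the first paragraph.
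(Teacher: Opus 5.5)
Your overall route is the paper's. You produce klt boundaries $\Delta+G_i$ with $G_i$ big and $K_X+\Delta+G_i\sim_\Q\lambda D_i$ (the paper's Proposition \ref{prop:uniform-adjointization}), pass to a small $\Q$-factorialization, and invoke finite generation of adjoint rings together with the rescaling/Veronese lemma. Two steps, however, do not work as written.

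\textbf{Triviality of the asymptotic multiplier ideal.} Your justification of $\mathcal J(X,\Delta;(1+\delta)\|B_i\|)=\OO_X$ is circular. The log resolution computing $\mathcal J(X,\Delta;\mathfrak b(|mB_i|)^{(1+\delta)/m})$ depends on $m$. The convergence $\frac1m\ord_F\mathfrak b(|mB_i|)\to\sigma_F(B_i)$ needs larger $m'$, whose base ideals are not resolved on that model. So there is no fixed finite set of divisors for the factor $1+\delta$ to absorb. The missing input is the valuative description of the threshold of a graded sequence. By Lemma \ref{lem-eq}, $\lct(X,\Delta;\mathfrak a_\bullet(B_i))=\inf_\nu A_{X,\Delta}(\nu)/\sigma_\nu(B_i)\ge\lct_\sigma(X,\Delta,-(K_X+\Delta))>1+\delta$. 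Then \cite[Theorem 3.7]{CJKL} (resting on \cite{JM12}) converts this into triviality of the asymptotic multiplier ideal, and Proposition \ref{prop:sum} supplies $G_i$. This is exactly how the paper proceeds, and your fallback via Theorem \ref{thrm:1} is unnecessary.

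\textbf{Order of quantifiers in the last step.} Here the argument fails as written. $B_i=-(K_X+\Delta)+\varepsilon A_i$ depends on $\varepsilon$, hence so do $G_i$ and the log resolution of $\Delta+G_i+E_i$. You therefore cannot shrink $\varepsilon$ afterwards by openness of klt on that resolution. The repair uses the room you built in. Since $\delta$ does not depend on $\varepsilon$, $(X,\Delta+(1+\delta)G_i)$ is klt for every $\varepsilon$. Fix $s>0$ with $(X,\Delta+sE_i)$ klt; this depends only on $E_i$. Log discrepancies are affine in the boundary, so the convex combination $\frac{1}{1+\delta}\bigl(\Delta+(1+\delta)G_i\bigr)+\frac{\delta}{1+\delta}(\Delta+sE_i)=\Delta+G_i+\frac{\delta s}{1+\delta}E_i$ is klt. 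Hence any rational $\varepsilon\le\frac{\delta s}{1+\delta}$ works.

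The paper avoids the decomposition $D_i\sim_\Q A_i+E_i$ entirely. Subadditivity of $\sigma$ plus Lemma \ref{lem:sup} (whose proof is your constant $s$ at the valuative level) give $\sigma_\nu(-(K_X+\Delta)+\lambda D_i)\le(a+\lambda b_i)A_{X,\Delta}(\nu)$ with $a+\lambda b_i<1$. A single application of the threshold/multiplier ideal correspondence then yields $G_i$ with no separate perturbation step.
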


The last application concerns the boundary of the movable cone.

\begin{theorem}\label{thm:Mov cone}
Let $X$ be a projective $\Q$-factorial variety of pklt type such that $\Pic(X)_{\Q}=N^1(X)_{\Q}$. Assume that
\[
\overline{\Mov}(X)\setminus\{0\}\subset\Bigdiv(X).
\]
Then $X$ is a Mori dream space. 
\end{theorem}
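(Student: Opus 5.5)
The plan is to apply the Hu--Keel characterization of Mori dream spaces. A projective $\Q$-factorial variety with $\Pic(X)_{\Q}=N^1(X)_{\Q}$ is a Mori dream space as soon as its Cox ring is finitely generated. Hence it suffices to show two things: that $\overline{\Eff}(X)$ is spanned by finitely many classes of effective $\Q$-divisors, and that the Cox ring is finitely generated.

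\emph{Step 1: every movable class is semiample after a small modification.} Let $D$ be a nonzero class in $\overline{\Mov}(X)$. By hypothesis $D$ is big. By Theorem~\ref{thm:BZD}, $D$ has a birational Zariski decomposition $f^*D=P+N$ on some $f\colon Y\to X$, with $P$ semiample.

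Since $D$ lies in the closed movable cone, $\sigma_E(D)=0$ for every prime divisor $E$ on $X$, so $N$ is $f$-exceptional. The semiample divisor $P$ defines a model $X_D=\operatorname{Proj}R(X,D)$, and $X\dashrightarrow X_D$ contracts no divisor. The ring $R(X,D)$ is finitely generated by Theorem~\ref{thm:fg} with $r=1$.

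\emph{Step 2: finiteness of the decomposition of $\Mov(X)$.} I would use that $X$ is of pklt type to run MMPs. Choose $\Delta$ with $(X,\Delta)$ pklt. For a big divisor $D$, one can perturb to a klt pair $(X,\Delta')$ with $K_X+\Delta'\sim_{\Q}\epsilon D$. This uses the birational Zariski decomposition of $-(K_X+\Delta)$ and the pklt condition to absorb the negative part, and it is what makes $X$ behave like a Fano type variety on the big cone.

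With this in hand, BCHM-type finiteness of ample models gives finitely many small $\Q$-factorial modifications $X_i$ together with rational polyhedral cones whose union is $\Mov(X)\cap\Bigdiv(X)$. The hypothesis $\overline{\Mov}(X)\setminus\{0\}\subset\Bigdiv(X)$ makes the closed movable cone lie in the big cone away from the origin. A compactness argument on a slice of $\overline{\Mov}(X)$ then shows that finitely many chambers suffice. So $\overline{\Mov}(X)=\Mov(X)$ is rational polyhedral and each chamber is $f_i^*\Nef(X_i)$ with $\Nef(X_i)$ semiample.

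\emph{Step 3: conclusion.} Take finitely many big divisors $D_1,\dots,D_r$ whose classes generate the effective cone, including the rays of all chambers and the exceptional prime divisors. Theorem~\ref{thm:fg} shows that $R(X;D_1,\dots,D_r)$ is finitely generated. Since $\Pic(X)_{\Q}=N^1(X)_{\Q}$, the Cox ring is a finite module or Veronese-type extension of such a multisection ring, and so it is finitely generated. Hu--Keel then yields that $X$ is a Mori dream space.

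\emph{Main obstacle.} I expect the main difficulty to be Step 2, namely producing, uniformly in a neighborhood of each movable big class, a klt boundary that realizes $D$ as an adjoint divisor. This is what allows finiteness of models to apply. I would first try Theorem~\ref{thm:BZD} applied to $-(K_X+\Delta)$ together with the openness of the pklt condition (Corollary~\ref{coro}) to obtain an $\varepsilon$-room that is uniform over a compact slice.
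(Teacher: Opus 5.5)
\textbf{There is a genuine gap: Step 3 fails.} Any nonzero nonnegative combination of big classes is big. So big divisors $D_1,\dots,D_r$ span a cone inside $\Bigdiv(X)\cup\{0\}$, whereas $\overline{\Eff}(X)$ has non-big boundary classes as soon as $\rho(X)\ge 2$. In particular, the prime divisors $E$ with $\sigma_E(E)>0$ that you want to include are never big. Indeed, $N_\sigma(E)=\sigma_E(E)E$, and if $E$ were big, then $P_\sigma(E)=(1-\sigma_E(E))E$ would be big with $N_\sigma(P_\sigma(E))=0$, which forces $\sigma_E(E)=0$. Theorem \ref{thm:fg} concerns only big divisors, so it tells you nothing about the Cox ring. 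You also never show that $\overline{\Eff}(X)$ is rational polyhedral, or how sections of non-big effective divisors are generated.

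The Cox ring is not needed at all. The hypothesis $\overline{\Mov}(X)\setminus\{0\}\subset\Bigdiv(X)$ is there precisely so that one can stay inside the big cone. The paper checks the definition of a Mori dream space directly: finitely many small $\Q$-factorial modifications $f_i\colon X\dashrightarrow X_i$ with $\overline{\Mov}(X)=\bigcup_i f_i^*\Nef(X_i)$, and each $\Nef(X_i)$ rational polyhedral and spanned by semiample classes. Had your Step 2 been complete, you could have stopped there.

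\textbf{Step 2 leaves its key input open.} The fix you suggest does not apply: $-(K_X+\Delta)$ is only pseudoeffective, so Theorem \ref{thm:BZD} says nothing about it. The uniform adjoint presentation you need is Proposition \ref{prop:uniform-adjointization}. Take finitely many big $\Q$-divisors $D_1,\dots,D_r$ spanning a neighbourhood of a class $\alpha$ in the compact slice. The proposition gives a single $\lambda\in\Q_{>0}$ and klt pairs $(X,\Delta+G_j)$ with $K_X+\Delta+G_j\sim_{\Q}\lambda D_j$. Kltness is preserved under convex combinations of boundaries. Hence the divisors $\Delta+\sum_j t_jG_j$ ($t_j\ge 0$, $\sum_j t_j=1$) form a rational polytope of klt boundaries with big part, whose adjoint classes cover that neighbourhood. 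BCHM finiteness of log terminal models then applies, and compactness of the slice finishes.

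\textbf{Step 1 uses the wrong models.} The small modifications come from log terminal (minimal) models, not ample models. Your claim that $X\dashrightarrow\operatorname{Proj}R(X,D)$ contracts no divisor fails on chamber walls. For example, take a K3 surface whose Picard lattice is spanned by two $(-2)$-curves $C_1,C_2$ with $C_1\cdot C_2\ge 3$; it satisfies all the hypotheses. A big nef class orthogonal to $C_1$ has an ample model that contracts $C_1$. Smallness should instead come from $N_\sigma(D)=0$, as in the paper.

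\textbf{Comparison with the paper.} With these repairs your route is a valid alternative: it uses BCHM finiteness of models directly. The paper instead reduces everything to finite generation. It gets local rational polyhedrality of $\overline{\Mov}(X)$ from Theorem \ref{thm:fg} and \cite[Corollary 3.4]{KKL}, globalizes with Lemma \ref{lem:fdrvs}, and obtains the finitely many models from \cite[Theorem 1.1]{KKL}.
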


The rest of this paper is organized as follows. In Section \ref{sect-2}, we recall the definitions and properties of valuations, asymptotic orders, potentially klt pairs, birational Zariski decompositions, and Mori dream spaces. In Section \ref{sect-3}, we develop coupled asymptotic and diminished multiplier ideals and prove Theorem \ref{thrm:1} and Corollary \ref{coro}. In Subsection \ref{subsect-ggf}, we prove Corollary \ref{coro2}. The applications to varieties of pklt type are proved in Subsection \ref{subsect-appl}.

\section*{Acknowledgement}
The authors are partially supported by Samsung Science and Technology Foundation under Project Number SSTF-BA2302-03. The first author was supported by Basic Science Research Program through the National Research Foundation of Korea (NRF) funded by the Ministry of Education (No. RS-2026-25555589), and the second author is partially supported by the National Research Foundation of Korea (NRF) (No. RS-2025-00513064 and 2021R1A6A1A10039823), and by a KIAS Individual Grant (MG111201) at Korea Institute for Advanced Study

\section{Preliminaries}\label{sect-2}
Throughout the paper, we work over an algebraically closed field $k$ of characteristic zero. Varieties are normal and projective unless stated otherwise. A pair $(X,\Delta)$ consists of a normal projective variety $X$ and an effective $\Q$-divisor $\Delta$ such that $K_X+\Delta$ is $\Q$-Cartier.

\subsection{Valuations and asymptotic orders}
Let \(X\) be a normal variety with function field \(K\). We write \(\Val_X\) for the set of (real) valuations on \(K\) centered on \(X\), and \(\Val_X^{\ast}\coloneqq \Val_X\setminus \{0\}\). If \(E\) is a prime divisor on a birational model of \(X\), then \(\ord_E\) denotes the associated divisorial valuation. We refer to \cite{JM12} for quasi-monomial valuations and the topology on \(\Val_X\). 

Let \((X,\Delta)\) be a klt pair. The log discrepancy extends to a lower semicontinuous, homogeneous function $A_{X,\Delta}\colon \Val_X\longrightarrow [0,+\infty]$. Let $(Y,E)$ be a log smooth model of $(X,\Delta)$ and let $\eta$ be the generic point of a stratum of $E$. On $\mathrm{QM}_\eta(Y,E)$, associated with a log smooth model $(Y,E)$ of $(X,\Delta)$, $A_{X,\Delta}$ is the linear function $\sum_{j=1}^q \alpha_j A_{X,\Delta}(E_j)$.

Let \(D\) be a pseudoeffective \(\Q\)-Cartier \(\Q\)-divisor. For a big divisor $D$, set $\nu(|D|)\coloneqq \inf_m \frac{\nu(\mathfrak{b}(|mD|))}{m}$, where $mD$ is Cartier and $|mD|\neq \emptyset$. For an ample Cartier divisor $A$, set $\sigma_{\nu}(D)\coloneqq \lim_{t\to 0^{+}}\nu(|D+tA|)$, taking rational $t>0$. The limit is independent of $A$ and equals $\nu(|D|)$ when $D$ is big, see \cite[III.1.5]{Nak}. The function \(D\mapsto \sigma_{\nu}(D)\) is homogeneous and subadditive on the pseudoeffective cone, and it is continuous on the big cone. 

For a point $x\in X$, write $X_x\coloneqq \Spec \OO_{X,x}$, let $\Delta_x$ be the induced boundary, and let $\mathfrak{m}_x$ be the maximal ideal of $\OO_{X,x}$. The notation $\Val_{X,x}$ means that the center is exactly $x$. For a graded sequence $\mathfrak{a}_{\bullet}$, we use $\mathfrak{a}_0=\OO_X$ and $\nu(\mathfrak{a}_{\bullet})=\inf_{m\geq 1} \frac{\nu(\mathfrak{a}_m)}{m}$.

\subsection{Potentially klt pairs and coupled tuples}
Let \((X,\Delta)\) be a pair such that \(-(K_X+\Delta)\) is pseudoeffective. We say that $(X,\Delta)$ is \emph{potentially klt (pklt)} if there exists $\varepsilon > 0$ such that
\[
A_{X,\Delta}(E)-\sigma_E(-(K_X+\Delta))\geq \varepsilon,
\]
where \(E\) runs through all prime divisors over \(X\).

\begin{definition}
    A normal projective variety \(X\) is of \emph{pklt type} if there exists an effective \(\Q\)-divisor \(\Delta\) on \(X\) such that \((X,\Delta)\) is pklt.
\end{definition}

\begin{definition}
    Let \((X,\Delta)\) be a projective klt pair, and let \(\mathbf{D}=(D_1,\dots,D_r)\) be a tuple of pseudoeffective \(\Q\)-Cartier \(\Q\)-divisors on \(X\). For \(\nu\in \Val_X\), set $\sigma_{\mathbf{D}}(\nu)\coloneqq \sum_{i=1}^r \sigma_{\nu}(D_i)$. We define the \emph{log canonical threshold} of \((X,\Delta)\) with respect to \(\mathbf{D}\) as 
    \[
    \lct_{\sigma}(X,\Delta,\mathbf{D})\coloneqq \inf_{\nu\in \Val_X^{\ast}}\frac{A_{X,\Delta}(\nu)}{\sigma_{\mathbf{D}}(\nu)}.
    \]
    The tuple \((X,\Delta,\mathbf{D})\) is \emph{coupled pklt} if there exists \(\varepsilon>0\) such that $A_{X,\Delta}(E)-\sigma_{\mathbf{D}}(E)\geq \varepsilon$
    for every prime divisor \(E\) over \(X\).
\end{definition}

The following observation will be used repeatedly.
\begin{lemma}\label{lem:sup}
    Let \((X,\Delta)\) be a projective klt pair, and let \(D\) be a big \(\Q\)-Cartier \(\Q\)-divisor. Then
    \[
    \sup_{\nu\in \Val_X^{\ast}}\frac{\sigma_{\nu}(D)}{A_{X,\Delta}(\nu)}<+\infty.
    \]
\end{lemma}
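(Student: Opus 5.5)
The idea is to reduce to the valuations of a single very ample linear system, where the log canonical threshold of an ideal is available.

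First reduce to the case where $D$ is effective. Since $D$ is big and $\Q$-Cartier, choose $m\geq 1$ with $mD$ Cartier and $|mD|\neq\emptyset$. Fix any $G\in|mD|$. By definition $\sigma_\nu(D)=\nu(|D|)\le \nu(\mathfrak b(|mD|))/m$, and since $\mathfrak b(|mD|)\supseteq\OO_X(-G)$ we get
\[
\sigma_\nu(D)\le \tfrac1m\,\nu(\OO_X(-G))=\tfrac1m\,\nu(G)
\]
for every $\nu\in\Val_X$. It therefore suffices to bound $\nu(\mathfrak a)/A_{X,\Delta}(\nu)$ uniformly in $\nu$, where $\mathfrak a\coloneqq\mathfrak b(|mD|)$, or more simply $\mathfrak a=\OO_X(-G)$. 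This is a nonzero coherent ideal sheaf on $X$.

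The key input is that $(X,\Delta)$ is klt, so the log canonical threshold $\lct(X,\Delta;\mathfrak a)$ of a nonzero ideal is a positive real number. This holds by taking a log resolution of $(X,\Delta,\mathfrak a)$: the threshold is a minimum over finitely many exceptional and strict transform divisors of a ratio that is positive because all log discrepancies are positive. By the valuative characterization of \cite{JM12}, valid for klt pairs, we have
\[
\lct(X,\Delta;\mathfrak a)=\inf_{\nu\in\Val_X^\ast}\frac{A_{X,\Delta}(\nu)}{\nu(\mathfrak a)}.
\]
Hence $\nu(\mathfrak a)\le A_{X,\Delta}(\nu)/\lct(X,\Delta;\mathfrak a)$ for all $\nu$ with $A_{X,\Delta}(\nu)<\infty$. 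When $A_{X,\Delta}(\nu)=+\infty$ the ratio $\sigma_\nu(D)/A_{X,\Delta}(\nu)$ is $0$, since $\sigma_\nu(D)\le\nu(G)/m<\infty$ because $\nu$ is a real valuation centered on $X$.

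Combining the two bounds gives
\[
\frac{\sigma_\nu(D)}{A_{X,\Delta}(\nu)}\le \frac{1}{m\,\lct(X,\Delta;\OO_X(-G))}<+\infty
\]
for all $\nu\in\Val_X^\ast$, where $A_{X,\Delta}(\nu)>0$ for $\nu\neq0$ by the klt condition. The only point needing care is the valuative formula for the lct over all real valuations, rather than just divisorial ones. If I wanted to avoid it, I would argue directly on a log resolution $(Y,E)$ of $(X,\Delta,\mathfrak a)$. Every $\nu$ satisfies $A_{X,\Delta}(\nu)\ge A_{X,\Delta}(r_Y\nu)$ and $\nu(\mathfrak a)=r_Y\nu(\mathfrak a)$. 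On each quasi-monomial cone $\mathrm{QM}_\eta(Y,E)$, both functions are linear in the weights $\alpha_j$, with $\nu(\mathfrak a)=\sum\alpha_j\ord_{E_j}(\mathfrak a)$ and $A=\sum\alpha_jA(E_j)$. So the ratio is bounded by $\max_j \ord_{E_j}(\mathfrak a)/A_{X,\Delta}(E_j)$, a finite maximum.
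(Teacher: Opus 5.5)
Your proposal is correct and is essentially the paper's argument. The paper picks an effective $B\sim_{\Q}D$, uses $\sigma_\nu(D)\le\nu(B)$, and uses that $(X,\Delta+tB)$ is klt for some $t>0$ to get $t\nu(B)<A_{X,\Delta}(\nu)$ for every $\nu\in\Val_X^{\ast}$; this is exactly your bound via $\lct(X,\Delta;\OO_X(-G))>0$ with $B=\frac{1}{m}G$. Your retraction argument on a log resolution makes explicit the passage from divisorial to arbitrary real valuations, which the paper leaves implicit.
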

\begin{proof}
    Choose an effective \(\Q\)-divisor \(B\sim_{\Q} D\). Since \((X,\Delta)\) is klt, there exists \(t>0\) such that \((X,\Delta+tB)\) is klt. For every \(\nu\in \Val_X^{\ast}\), one has \(t\nu(B)<A_{X,\Delta}(\nu)\). Since \(\sigma_{\nu}(D)\leq \nu(B)\), the assertion follows.
\end{proof}

\begin{lemma}\label{lem-logdiscr}
    Let $(X,\Delta)$ be a klt pair, and let $m$ be a positive integer such that $m(K_X+\Delta)$ is Cartier. Then $A_{X,\Delta}(E)\geq \frac{1}{m}$ for every prime divisor $E$ over $X$.
\end{lemma}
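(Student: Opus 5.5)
The plan is to compute the log discrepancy on a log resolution and read off integrality from the Cartier condition. Let \(E\) be a prime divisor over \(X\). Choose a proper birational morphism \(f\colon Y\to X\) from a normal variety \(Y\) on which \(E\) appears as a prime divisor; we may take \(Y\) smooth near the generic point of \(E\), for instance a resolution. Write
\[
K_Y+\Delta_Y=f^{*}(K_X+\Delta),
\]
with the usual convention \(f_*K_Y=K_X\). Then \(A_{X,\Delta}(E)=1-\mult_E(\Delta_Y)=1+\mult_E(K_Y-f^*(K_X+\Delta))\).

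Next we use the Cartier assumption. Since \(m(K_X+\Delta)\) is Cartier, \(f^*(m(K_X+\Delta))\) is a Cartier divisor with integer coefficients. Moreover \(mK_Y\) is an integral Weil divisor, so
\[
m\,\mult_E(\Delta_Y)=\mult_E\bigl(f^*(m(K_X+\Delta))-mK_Y\bigr)\in\Z .
\]
This needs a little care. If \(E\) is \(f\)-exceptional, the coefficient above is an integer because both divisors have integral coefficients along \(E\). If \(E\) is a divisor on \(X\) itself, then \(\mult_E\Delta_Y=\mult_E\Delta\). In that case \(m\Delta=m(K_X+\Delta)-mK_X\) is integral along \(E\) after choosing \(K_X\) to be an integral Weil divisor, so again \(m\,\mult_E(\Delta_Y)\in\Z\).

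It follows that \(mA_{X,\Delta}(E)=m-m\,\mult_E(\Delta_Y)\) is an integer. Since \((X,\Delta)\) is klt, \(A_{X,\Delta}(E)>0\). A positive number whose \(m\)-multiple is an integer is at least \(1/m\), which gives \(A_{X,\Delta}(E)\geq \frac1m\).

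The only subtle point is the independence of the choice of canonical divisor. The quantity \(K_Y-f^*(K_X+\Delta)\) depends only on choosing \(K_Y\) and \(K_X\) compatibly as integral Weil divisors, and \(f^*\) of the Cartier divisor \(m(K_X+\Delta)\) is integral. With these choices in place, everything else is routine.
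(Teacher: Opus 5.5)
Your proof is correct and follows the paper's approach: the paper's proof is the one-line observation that $mA_{X,\Delta}(E)$ is a positive integer, and you verify exactly this (integrality from $f^{*}(m(K_X+\Delta))$ and $mK_Y$ being integral, positivity from klt). Your case split between exceptional and non-exceptional $E$ is unnecessary, since the integrality argument applies to every $E$ on $Y$, but it does no harm.
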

\begin{proof}
    Since $mA_{X,\Delta}(E)$ is a positive integer for every prime divisor $E$ over $X$, we obtain the assertion.
\end{proof}

\subsection{Birational Zariski decompositions and multisection rings}
\begin{definition}
    Let \(D\) be a big \(\Q\)-Cartier \(\Q\)-divisor on \(X\). A birational Zariski decomposition of \(D\) with semiample positive part consists of a projective birational morphism \(f\colon Y\to X\) and a decomposition $f^{\ast}D=P+N$, where \(P\) is a semiample \(\Q\)-Cartier \(\Q\)-divisor and \(N\geq 0\), such that the natural inclusion
    \[
    H^0(Y,mP)\hookrightarrow H^0(Y,mf^{\ast}D)
    \]
    is an isomorphism for every sufficiently divisible positive integer \(m\).
\end{definition}

For a $\Q$-Cartier $\Q$-divisor $D$, we set $H^0(X,D)\coloneqq \{0\}\cup\{f\in k(X)^{\ast}\mid \mathrm{div}(f)+D\ge 0\}$. For $D_1,\dots,D_r\in\Div_{\Q}(X)$, the associated multisection ring is
\[
R(X;D_1,\dots,D_r)
\coloneqq
\bigoplus_{(m_1,\dots,m_r)\in\N^r}
H^0\!\left(X,\sum m_iD_i\right).
\]
Finite generation is preserved after replacing each $D_i$ by a positive rational multiple \cite[Lemma 3.1]{KKL}.

We denote by $\mathrm{Big}(X)$ the big cone in $N^1(X)_{\R}$, by $\overline{\Eff}(X)$ the pseudoeffective cone, and by $\overline{\Mov}(X)$ the closure of the cone generated by movable divisor classes.

A projective $\Q$-factorial variety $X$ is a \emph{Mori dream space} if $\Pic(X)_{\Q}=N^1(X)_{\Q}$, the nef cone is generated by finitely many semiample classes, and there are finitely many small $\Q$-factorial modifications $\varphi_j\colon X\dashrightarrow X_j$ such that
\[
\overline{\Mov}(X)=\bigcup_j \varphi_j^{\ast}\Nef(X_j).
\]

For a rational polyhedral cone $\mathcal C\subset\Div_{\R}(X)$, we use the divisorial ring $R(X,\mathcal C)$ as in \cite[Section 3]{KKL}. Every cone considered below satisfies $\mathcal C\setminus\{0\}\subset\Bigdiv(X)$.

\section{Main results}\label{sect-3}
\subsection{Asymptotic multiplier ideal sheaves associated to tuples}
\begin{definition}
Let $(X,\Delta)$ be a projective klt pair, and let $\mathbf D=(D_1,\ldots,D_r)$ be a tuple of big $\mathbb Q$-Cartier $\mathbb Q$-divisors on $X$. For every positive integer $m$, define
\[
\mathfrak a_m(\mathbf D)
\coloneqq
\begin{cases}
\displaystyle
\prod_{j=1}^r \mathfrak b(|mD_j|),
&
\begin{array}{l}
\text{if every $mD_j$ is Cartier and every \(|mD_i|\) is nonempty},
\end{array}
\\[4mm]
(0),& \text{ otherwise.}
\end{cases}
\]
The inclusions
\[
\mathfrak b(|mD_j|)\mathfrak b(|nD_j|)
\subseteq
\mathfrak b(|(m+n)D_j|)
\]
show that $\mathfrak a_\bullet(\mathbf D)$ is a graded sequence of ideals.

For positive real numbers \(\lambda_1,\dots,\lambda_r\),
we define
\[
\mathcal J(X,\Delta;\lambda_1\|D_1\|,\dots,\lambda_r\|D_r\|)\coloneqq\max_{m}
\mathcal J\left(X,\Delta;\prod_{j=1}^r \mathfrak b(|mD_j|)^{\frac{\lambda_j}{m}}\right),
\]
where \(m\) runs through sufficiently divisible positive integers. By the Noetherian property, the collection has a maximum. We write
\[
\mathcal J(X,\Delta;c\|\mathbf{D}\|)\coloneqq \mathcal J\left(X,\Delta;c\|D_1\|,\dots,c\|D_r\|\right)
\]
for $c>0$.
\end{definition}

\begin{lemma}\label{lem-eq}
    For every \(\nu\in \Val_X\), we have $\nu(\mathfrak{a}_{\bullet}(\mathbf{D}))=\sum_{i=1}^r \sigma_{\nu}(D_i)$. In particular, we have
    \[
    \lct(X,\Delta;\mathfrak{a}_{\bullet}(\mathbf{D}))=\lct_{\sigma}(X,\Delta,\mathbf{D}).
    \]
\end{lemma}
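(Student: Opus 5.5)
We compute both sides from the definitions. Fix $\nu\in\Val_X$. Since $\nu$ is a valuation, $\nu$ of a product of ideals is the sum of the values, so for every $m$ with all $mD_j$ Cartier and all $|mD_j|$ nonempty we have
\[
\frac{\nu(\mathfrak a_m(\mathbf D))}{m}=\sum_{j=1}^r\frac{\nu(\mathfrak b(|mD_j|))}{m}.
\]
For every other $m$ we have $\mathfrak a_m(\mathbf D)=(0)$, hence $\nu(\mathfrak a_m)=+\infty$, and these $m$ do not contribute to the infimum. Thus
\[
\nu(\mathfrak a_\bullet(\mathbf D))=\inf_{m\in M}\sum_j\frac{\nu(\mathfrak b(|mD_j|))}{m},
\]
where $M$ is the set of "good'' $m$. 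Since each $D_j$ is big, $M$ contains all sufficiently divisible integers and is closed under multiplication by positive integers.

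The key step is to exchange the infimum and the sum. The inequality $\nu(\mathfrak a_\bullet)\ge\sum_j\nu(|D_j|)=\sum_j\sigma_\nu(D_j)$ is immediate, since each summand is at least its own infimum; here we use that $\nu(|D_j|)=\sigma_\nu(D_j)$ for big $D_j$ \cite[III.1.5]{Nak}. For the reverse inequality, set $a_{j,m}:=\nu(\mathfrak b(|mD_j|))$ for $m\in M$. By $\mathfrak b(|mD_j|)\mathfrak b(|nD_j|)\subseteq\mathfrak b(|(m+n)D_j|)$, the sequence $a_{j,m}$ is subadditive on $M$. By Fekete's lemma, applied along a cofinal divisibility chain, $\inf_{m\in M}a_{j,m}/m=\lim a_{j,m}/m$ as $m\to\infty$ through sufficiently divisible integers. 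More directly: given $\epsilon>0$, pick $m_j\in M$ with $a_{j,m_j}/m_j<\nu(|D_j|)+\epsilon$, and let $m=\prod_j m_j$, or a common multiple lying in $M$. Subadditivity gives $a_{j,m}/m\le a_{j,m_j}/m_j$, so $\sum_j a_{j,m}/m<\sum_j\nu(|D_j|)+r\epsilon$. Hence equality holds. Infinite values cause no trouble, because for $\nu$ centered on $X$ and big $D_j$ each $a_{j,m}$ is finite.

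For the "in particular'' statement, we recall that for a graded sequence of ideals $\lct(X,\Delta;\mathfrak a_\bullet)=\inf_{\nu\in\Val_X^*}A_{X,\Delta}(\nu)/\nu(\mathfrak a_\bullet)$, as in \cite{JM12}. Substituting the first part, $\nu(\mathfrak a_\bullet(\mathbf D))=\sigma_{\mathbf D}(\nu)$, turns this into the definition of $\lct_\sigma(X,\Delta,\mathbf D)$. The convention for quotients with zero denominator is the same on both sides. No step is a serious obstacle; the only point needing care is the exchange of infimum and sum, which rests on subadditivity and on the passage to common multiples described above.
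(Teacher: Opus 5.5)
Your proof is correct and follows the same route as the paper: additivity of $\nu$ on products of base ideals, then passage to a common sufficiently divisible index, and finally $\nu(|D_j|)=\sigma_\nu(D_j)$ for big $D_j$. Your subadditivity argument, using $\mathfrak b(|m_jD_j|)^k\subseteq\mathfrak b(|km_jD_j|)$ and $m=\prod_j m_j$, spells out the common-index step more fully than the paper, and the threshold equality then follows by substituting $\nu(\mathfrak a_\bullet(\mathbf D))=\sigma_{\mathbf D}(\nu)$ into the valuative formula for the log canonical threshold of a graded sequence.
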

\begin{proof}
    The valuation of a product of ideals is the sum of the valuations. Computing along a common sufficiently divisible subsequence gives
    \[
    \nu(\mathfrak{a}_{\bullet}(\mathbf{D}))=\sum_{i=1}^r \nu(\|D_i\|)=\sum_{i=1}^r \sigma_{\nu}(D_i).
    \]
    Hence, we complete the proof.
\end{proof}

\begin{proposition}\label{prop:sum}
Let $(X,\Delta)$ be a projective klt pair, and let $\mathbf D=(D_1,\ldots,D_r)$ be a tuple of big $\mathbb Q$-Cartier $\mathbb Q$-divisors on $X$. Then
\[
\mathcal J(X,\Delta;\|\mathbf D\|)=\bigcup_{\substack{0\leq D_j'\sim_{\mathbb Q}D_j\\1\leq j\leq r}}
\mathcal J\left(X,\Delta+\sum_{j=1}^rD_j'\right).
\]
More precisely, $\mathcal J(X,\Delta;\|\mathbf D\|)$ is the maximum element.
\end{proposition}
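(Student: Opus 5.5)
We prove the two inclusions separately, following the standard argument for a single divisor, namely that $\mathcal J(X,\Delta;\|D\|)$ is the largest among the $\mathcal J(X,\Delta+D')$ with $0\le D'\sim_\Q D$. Then we check that the right-hand union is actually attained.

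\emph{Inclusion ``$\supseteq$''.} Fix $0\leq D_j'\sim_\Q D_j$ for $1\le j\le r$. Choose $m$ sufficiently divisible so that each $mD_j'$ is an integral Cartier divisor linearly equivalent to $mD_j$. Then the local equation of $mD_j'$ lies in $\mathfrak b(|mD_j|)$, so
\[
\OO_X(-mD_j')\subseteq \mathfrak b(|mD_j|).
\]
For any log resolution $\mu\colon Y\to X$ of everything involved, this gives $\frac1m\,\ord_F\mathfrak b(|mD_j|)\le \ord_F(D_j')$ for every prime $F$ on $Y$. Summing over $j$ and comparing the defining round-downs yields
\[
\mathcal J\Bigl(X,\Delta+\sum_j D_j'\Bigr)\subseteq \mathcal J\Bigl(X,\Delta;\prod_j \mathfrak b(|mD_j|)^{1/m}\Bigr)\subseteq \mathcal J(X,\Delta;\|\mathbf D\|).
\]
Alternatively, one can use the monotonicity of multiplier ideals under inclusion of ideals, together with $\mathcal J(X,\Delta;\mathfrak a^{c})$ for principal $\mathfrak a$.

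\emph{Inclusion ``$\subseteq$'' and the maximum.} Fix $m$ sufficiently divisible so that
\[
\mathcal J(X,\Delta;\|\mathbf D\|)=\mathcal J\Bigl(X,\Delta;\prod_j\mathfrak b(|mD_j|)^{1/m}\Bigr).
\]
For each $j$, pick $p\ge 1$ (say $p>\dim X$ plus enough room) and general members $G_{j,1},\dots,G_{j,p}\in|mD_j|$. Set
\[
D_j'\coloneqq \frac{1}{mp}\sum_{k=1}^p G_{j,k}\ \sim_\Q D_j .
\]
We then take a log resolution $\mu\colon Y\to X$ that resolves all $\mathfrak b(|mD_j|)$ simultaneously together with $\Delta$, so that
\[
\mu^{-1}\mathfrak b(|mD_j|)=\OO_Y(-F_j)
\]
and $|\mu^*(mD_j)|=|M_j|+F_j$ with $|M_j|$ base point free. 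By Bertini, the general divisors $\mu^*G_{j,k}=M_{j,k}+F_j$ have $M_{j,k}$ smooth and meeting everything transversally. This gives $\mu^*D_j'=F_j/m+\frac1{mp}\sum_k M_{j,k}$.

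The key step, which I expect to be the main (though standard) obstacle, is to show that
\[
\Bigl\lfloor K_{Y/X}-\mu^*\Delta-\sum_j\mu^*D_j'\Bigr\rfloor=\Bigl\lfloor K_{Y/X}-\mu^*\Delta-\sum_j F_j/m\Bigr\rfloor .
\]
Here the difference consists of divisors $M_{j,k}$ with coefficients $\frac1{mp}<1$, which are in normal crossing position with the rest. This is the usual argument as in Lazarsfeld's proof that multiplier ideals of ideals are computed by general $\Q$-divisors (Proposition 9.2.26 of Positivity II); here it is carried out simultaneously for $r$ ideals. The one subtlety is that the $M_{j,k}$ for different $j$ must also be mutually transversal and must not meet coefficient-$\ge1$ strata badly. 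Generality handles both, since each $|M_j|$ is base point free on $Y$ and we choose its members successively in general position with respect to the earlier ones. With equality of round-downs, pushing forward gives
\[
\mathcal J\Bigl(X,\Delta;\prod_j\mathfrak b(|mD_j|)^{1/m}\Bigr)=\mathcal J\Bigl(X,\Delta+\sum_jD_j'\Bigr).
\]
So $\mathcal J(X,\Delta;\|\mathbf D\|)$ belongs to the family on the right-hand side, and by the first inclusion it contains every member. Hence it is the maximum element and equals the union.
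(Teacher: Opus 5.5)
Your proposal is correct and follows essentially the same route as the paper: $\supseteq$ comes from $\prod_j\OO_X(-mD_j')\subseteq\mathfrak a_m(\mathbf D)$ and monotonicity, and $\subseteq$ from general members on a common log resolution. The paper takes a single general $H_j\in|mD_j|$ for each $j$ and insists on $m>1$ so that the moving parts get coefficient $\frac1m<1$, whereas you average $p$ general members to get coefficient $\frac{1}{mp}<1$ (which needs $mp>1$).

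One correction to your key identity: it should compare $\bigl\lfloor\mu^*(K_X+\Delta)+\sum_j\mu^*D_j'\bigr\rfloor$ with $\bigl\lfloor\mu^*(K_X+\Delta)+\frac1m\sum_jF_j\bigr\rfloor$, or equivalently the round-ups of your two expressions. As you wrote it, the round-down of $-\frac{1}{mp}M_{j,k}$ is $-M_{j,k}$, so the equality would fail.
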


\begin{proof}
Fix effective $\mathbb Q$-divisors $D_j'\sim_{\mathbb Q}D_j$. Choose a sufficiently divisible integer $m>1$ such that every $mD_j'$ is an integral Cartier divisor. Set $H_j\coloneqq mD_j'\in |mD_j|$. Then we have 
\[
\prod_{j=1}^r\mathcal O_X(-H_j)\subseteq \prod_{j=1}^r\mathfrak b(|mD_j|) = \mathfrak a_m(\mathbf D).
\]
The monotonicity of multiplier ideals gives
\begin{align*}
\mathcal J\left(X,\Delta+\sum_{j=1}^rD_j'\right)
&=
\mathcal J\left(X,\Delta;\left(\prod_{j=1}^r\mathcal O_X(-H_j)\right)^{1/m}\right)\\
&\subseteq
\mathcal J\left(X,\Delta;\mathfrak a_m(\mathbf D)^{1/m}\right)\\
&\subseteq
\mathcal J(X,\Delta;\|\mathbf D\|).
\end{align*}

For the reverse inclusion, choose a sufficiently divisible integer
$m>1$ such that
\[
\mathcal J(X,\Delta;\|\mathbf D\|) = \mathcal J\left(X,\Delta;\mathfrak a_m(\mathbf D)^{1/m}\right).
\]
Let \(f\colon Y\to X\) be a common log resolution of $(X,\Delta)$ and the ideals $\mathfrak b(|mD_j|)$. Write \(\mathfrak b(|mD_j|)\mathcal O_Y = \mathcal O_Y(-F_{j,m})\) and \(f^*|mD_j| = |M_{j,m}|+F_{j,m}\), where $|M_{j,m}|$ is basepoint free. Set $F_m\coloneqq\sum_{j=1}^rF_{j,m}$. Then we have
\[
\mathcal J(X,\Delta;\|\mathbf D\|)
= f_*\mathcal O_Y\left(K_Y-\left\lfloor f^*(K_X+\Delta)+\frac{1}{m}F_m\right\rfloor\right).
\]

Choose general members $H_j\in |mD_j|$, and after replacing the resolution if necessary, one may write \(f^*H_j=M_{j,H_j}+F_{j,m}\), where
\[
\Supp\left(f_*^{-1}\Delta+\Exc(f)+F_m+\sum_{j=1}^rM_{j,H_j}\right)
\]
has simple normal crossings. We choose the divisors $M_{j,H_j}$ without common components and without any component in the support of
\[
f^*(K_X+\Delta)+\frac{1}{m}F_m.
\]
Since $m>1$, every component of $M_{j,H_j}$ occurs with coefficient
$\frac{1}{m}<1$. Consequently,
\[
\left\lfloor
f^*(K_X+\Delta)
+\frac{1}{m}F_m
+\frac{1}{m}\sum_{j=1}^rM_{j,H_j}
\right\rfloor
=
\left\lfloor
f^*(K_X+\Delta)+\frac{1}{m}F_m
\right\rfloor.
\]
It follows that
\begin{align*}
\mathcal J(X,\Delta;\|\mathbf D\|)
&=
\mathcal J\left(X,\Delta+\frac{1}{m}\sum_{j=1}^rH_j\right).
\end{align*}
The divisors $\frac{1}{m}H_j$ are effective and $\mathbb Q$-linearly equivalent to $D_j$. The ideal $\mathcal J(X,\Delta;\|\mathbf D\|)$ therefore belongs to the collection.
\end{proof}

We first establish a Nadel type vanishing theorem for asymptotic multiplier ideals associated with tuples of big divisors.

\begin{proposition}\label{prop;Nadel type}
Let $(X,\Delta)$ be a projective klt pair, and let $D_1,\cdots,D_r$ be big $\Q$-Cartier $\Q$-divisors on $X$. Let $\lambda_1,\dots,\lambda_r$ be positive real numbers, and let $L$ be a Cartier divisor satisfying
$$L-(K_X+\Delta)\sim_{\R}\sum_{i=1}^r \lambda_iD_i+A$$
for an ample $\R$-Cartier $\R$-divisor $A$ on $X$. Then we have
$$ H^q(X,\mathcal{O}_X(L)\otimes \mathcal{J}(X,\Delta,\lambda_1\|D_1\|,\cdots,\lambda_r\|D_r\|))=0$$
for every \(q>0\).
\end{proposition}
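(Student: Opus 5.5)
The plan is to reduce the statement to the classical Nadel vanishing theorem for a klt pair with an effective $\R$-divisor. The approach follows the proof of Proposition \ref{prop:sum}, now carried out with general real weights $\lambda_j$ in place of $1$.

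\emph{Step 1: fix a level computing the ideal.} Choose a sufficiently divisible integer $m>1$ such that every $mD_j$ is Cartier with $|mD_j|\neq\emptyset$, and such that
\[
\mathcal J(X,\Delta;\lambda_1\|D_1\|,\dots,\lambda_r\|D_r\|)=\mathcal J\Bigl(X,\Delta;\prod_{j=1}^r\mathfrak b(|mD_j|)^{\lambda_j/m}\Bigr).
\]
Such an $m$ exists by the definition, which uses the Noetherian maximum. We may also enlarge $m$ so that $\lambda_j/m<1$ for all $j$.

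\emph{Step 2: replace the ideals by a divisor.} Take a common log resolution $f\colon Y\to X$ of $(X,\Delta)$ and of all the base ideals, with $f^*|mD_j|=|M_{j,m}|+F_{j,m}$ and $|M_{j,m}|$ basepoint free. Choose general members $H_j\in|mD_j|$, so that $f^*H_j=M_{j,H_j}+F_{j,m}$ with $M_{j,H_j}$ reduced, in snc position with everything else, and sharing no components with the other divisors involved. The components of $M_{j,H_j}$ then enter with coefficient $\lambda_j/m<1$, so they do not change the round-down. The argument of Proposition \ref{prop:sum} therefore gives
\[
\mathcal J(X,\Delta;\lambda_1\|D_1\|,\dots,\lambda_r\|D_r\|)=\mathcal J\Bigl(X,\Delta+\sum_{j=1}^r\frac{\lambda_j}{m}H_j\Bigr).
\]

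\emph{Step 3: apply Nadel vanishing.} Set $B\coloneqq\sum_j\frac{\lambda_j}{m}H_j$. Then $B$ is an effective $\R$-divisor with $B\sim_{\R}\sum_j\lambda_jD_j$, so
\[
L-(K_X+\Delta+B)\sim_{\R}A
\]
is ample. Nadel vanishing for the pair $(X,\Delta+B)$ holds on a normal projective variety with $K_X+\Delta+B$ $\R$-Cartier; it is proved via Kawamata--Viehweg vanishing on $Y$ together with local vanishing of $R^if_*$. It gives
\[
H^q\bigl(X,\mathcal O_X(L)\otimes\mathcal J(X,\Delta+B)\bigr)=0\quad\text{for } q>0.
\]
Combining this with Step 2 yields the claim.

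The main obstacle is Step 2 with real coefficients. We need that adding the general moving parts with coefficients $\lambda_j/m$ does not alter $\lfloor f^*(K_X+\Delta)+\sum_j\frac{\lambda_j}{m}F_{j,m}\rfloor$. This requires $\lambda_j/m<1$, which is arranged by taking $m$ large; enlarging $m$ along a divisible sequence keeps the maximum, since the maximum is stable once it is attained. It also requires genericity, so that no component of $M_{j,H_j}$ lies in the support of the other terms and distinct $M_{j,H_j}$ cross normally. Bertini on the basepoint-free systems $|M_{j,m}|$ provides this. If real coefficients create any difficulty with Nadel vanishing, the fallback is to perturb $A$: write $A=A'+\sum_j\epsilon_jD_j$ with rational weights, which is possible since ampleness is open. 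Alternatively, one can note that the standard Kawamata--Viehweg vanishing for $\R$-divisors already suffices on $Y$.
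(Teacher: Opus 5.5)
Your argument is correct, but it is organized differently from the paper's. You first turn the asymptotic ideal into the multiplier ideal of an honest boundary. You do this by running the general-member argument of Proposition \ref{prop:sum} with weights $\lambda_j/m<1$:
\[
\mathcal J(X,\Delta;\lambda_1\|D_1\|,\dots,\lambda_r\|D_r\|)=\mathcal J(X,\Delta+B),\qquad B=\sum_j\tfrac{\lambda_j}{m}H_j\sim_{\R}\sum_j\lambda_jD_j.
\]
You then quote Nadel vanishing for $(X,\Delta+B)$ as a black box.

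The paper never chooses general members. On the log resolution it writes the ideal as $f_*\OO_Y(K_Y-\lfloor T\rfloor)$, where $T=f^*(K_X+\Delta)+\sum_i\frac{\lambda_i}{m}F_{m,i}$. It observes that $f^*L-T\sim_{\R}f^*A+\sum_i\frac{\lambda_i}{m}M_{m,i}$ is nef and big, because the moving parts are basepoint free. Three ingredients then finish the proof:
\begin{itemize}
\item Kawamata--Viehweg vanishing on $Y$;
\item its relative version for $f$, applied to $-T$, which is $f$-nef and $f$-big;
\item the Leray spectral sequence.
\end{itemize}
In effect, the paper reproves Nadel vanishing directly for the asymptotic ideal.

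The paper's route is shorter: it needs neither Bertini genericity for the $H_j$ nor enlarging $m$ to force $\lambda_j/m<1$. Your route instead reduces everything to the standard divisorial Nadel theorem. As a by-product it gives the weighted analogue of Proposition \ref{prop:sum}.

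Your fallback of perturbing $A$ is unnecessary. Kawamata--Viehweg vanishing already holds with $\R$-coefficients, and the paper uses it in that form via \cite[Theorem 3.2.9]{Fuj17}. As written, the fallback would also change the weights $\lambda_j$, so it would additionally need right-continuity of the ideal in the weights.
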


\begin{proof}
Fix a sufficiently divisible positive integer $m$ such that every $mD_i$ is Cartier, and
$$ \mathcal{J}(X,\Delta,\lambda_1\|D_1\|,\cdots,\lambda_r \|D_r\|)=\mathcal{J}\left(X,\Delta,\mathfrak{b}\left(|mD_1|\right)^{\frac{\lambda_1}{m}}\cdots \mathfrak{b}(|mD_r|)^{\frac{\lambda_r}{m}}\right).$$
Let $f\colon Y\to X$ be a common log resolution of $(X,\Delta)$ and the base ideals $\mathfrak{b}(|mD_i|)$. Write $f^*|mD_i|=|M_{m,i}|+F_{m,i}$, with $|M_{m,i}|$ basepoint free, and set $F_{\mathbf{\lambda}}\coloneqq \sum_i \frac{\lambda_i}{m}F_{m,i}$. Let $T\coloneqq f^{\ast}(K_X+\Delta)+F_{\mathbf{\lambda}}$. Then 
\begin{equation}\label{eq:J}
    \mathcal{J}(X,\Delta;\lambda_1|D_1|,\dots,\lambda_r|D_r|)=f_{\ast}\OO_Y(K_Y-\floor{T}).
\end{equation}
The fractional part $\{T\}\coloneqq T-\floor{T}$ has simple normal crossings support and coefficients in $[0,1)$. For the integral divisor $B\coloneqq K_Y+f^{\ast}L-\floor{T}$, we have
\begin{equation}\label{eq:B}
    B-(K_Y+\{T\})=f^{\ast}L-T\sim_{\R} f^{\ast}A+\sum_i\frac{\lambda_i}{m}M_{m,i}.
\end{equation}
The divisor on the right of \eqref{eq:B} is nef and big. Thus, Kawamata--Viehweg vanishing gives $H^q(Y,\OO_Y(B))=0$ for $q>0$, see \cite[Theorem 3.2.9]{Fuj17}. The divisor $-T$ is numerically equivalent to $\sum_i \frac{\lambda_i}{m}M_{m,i}$ over $X$, and is therefore nef over $X$. Since $f$ is birational, $-T$ is also big over $X$. The relative Kawamata--Viehweg vanishing theorem gives $R^qf_{\ast}\OO_Y(K_Y-\floor{T})=0$ for $q>0$ by \cite[Theorem 3.2.9]{Fuj17} again. The projection formula, \eqref{eq:J}, and the Leray spectral sequence give the assertion.
\end{proof}

We extend the definition of asymptotic multiplier ideals associated with tuples to pseudoeffective divisors. Let \(A\) be an ample Cartier divisor and set \(D_{i,\ell}\coloneqq D_i+\frac{1}{\ell !}A\). The following stabilization is proved by applying Proposition \ref{prop;Nadel type} and Castelnuovo--Mumford regularity, as in \cite[Proposition 3.10 and Lemma 3.11]{CJKL}.

\begin{lemma}\label{lem;decrease}
Let $(X,\Delta)$ be a projective klt pair, $D_1,\cdots,D_r$ pseudoeffective $\Q$-Cartier $\Q$-divisors on $X$, $\lambda_1,\cdots,\lambda_r>0$ real numbers, and let $A$ be an ample Cartier divisor on $X$. Then the sequence
\[
\mathcal{J}_{\ell}^A\coloneqq \mathcal{J}\left(X,\Delta;\lambda_1\|D_{1,\ell}\|,\cdots,\lambda_r\|D_{r,\ell}\|\right)
\]
is eventually decreasing and constant. Its stable value is independent of the choice of \(A\).
\end{lemma}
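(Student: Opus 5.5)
The argument has two parts. First we show that $\mathcal J_\ell^A$ is eventually decreasing, using only monotonicity of asymptotic orders. Then we show that the chain stabilizes and that the limit does not depend on $A$; here we combine Proposition~\ref{prop;Nadel type} with Castelnuovo--Mumford regularity, following \cite[Proposition 3.10 and Lemma 3.11]{CJKL}.

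\emph{Monotonicity.} For $\ell'\geq \ell$ we have $D_{i,\ell}=D_{i,\ell'}+\bigl(\tfrac{1}{\ell!}-\tfrac{1}{\ell'!}\bigr)A$, and the second summand is ample. Hence for every valuation $\nu$ we get $\sigma_\nu(D_{i,\ell})\leq \sigma_\nu(D_{i,\ell'})$, by subadditivity of $\sigma_\nu$ together with $\sigma_\nu$ of an ample class being zero. Multiplier ideals of graded sequences are governed valuatively: $f\in\mathcal J$ iff $\nu(f)+A_{X,\Delta}(\nu)>\nu(\mathfrak a_\bullet)$ for all $\nu$, in the graded-sequence form of Jonsson--Mustaţă. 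Through Lemma~\ref{lem-eq}, adapted to the weights $\lambda_i$, this description gives
\[
\mathcal J(X,\Delta;\lambda_1\|D_{1,\ell'}\|,\dots)\subseteq \mathcal J(X,\Delta;\lambda_1\|D_{1,\ell}\|,\dots).
\]
If one prefers to avoid valuations, the same inclusion follows directly from Proposition~\ref{prop:sum}, extended to weights $\lambda_j$. Indeed, any effective $D'_{j}\sim_{\Q} D_{j,\ell'}$ yields $D'_j+(\tfrac1{\ell!}-\tfrac1{\ell'!})A'\sim_{\Q}D_{j,\ell}$ with $A'$ a general effective ample divisor, and adding a general ample boundary with small coefficients does not change the multiplier ideal.

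\emph{Stabilization.} A decreasing chain of coherent ideal sheaves need not stabilize by Noetherianity alone, so we need a uniform lower bound. Fix a Cartier divisor $L_0$ such that $L_0-(K_X+\Delta)-\sum_i\lambda_i D_{i}$ is ample, after enlarging by a multiple of $A$ to absorb the $\tfrac{\lambda_i}{\ell!}A$ terms for $\ell\geq1$. Set $L=L_0+(\dim X)H$ with $H$ very ample and globally generated. Proposition~\ref{prop;Nadel type} gives $H^q(X,\mathcal O_X(L-qH)\otimes\mathcal J_\ell^A)=0$ for all $q>0$ and all $\ell$. So $\mathcal O_X(L)\otimes\mathcal J_\ell^A$ is $0$-regular in the sense of Castelnuovo--Mumford, uniformly in $\ell$, and is therefore globally generated. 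Consequently $\mathcal J_\ell^A$ is determined by the finite-dimensional subspace $V_\ell\coloneqq H^0(X,\mathcal O_X(L)\otimes\mathcal J_\ell^A)\subseteq H^0(X,\mathcal O_X(L))$. These subspaces form a decreasing chain in a fixed finite-dimensional vector space, so they stabilize, and hence so does $\mathcal J_\ell^A$.

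\emph{Independence of $A$.} Given two ample divisors $A,A'$, choose $c$ with $cA-A'$ ample. For each $\ell$ there is $\ell'$ with $\tfrac1{\ell'!}A'\leq \tfrac1{\ell!}A$ up to an ample difference, and symmetrically. The monotonicity comparison above then interleaves the two sequences: $\mathcal J^{A'}_{\ell'}\subseteq\mathcal J^A_\ell$, and conversely. Hence the two stable values coincide.

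The main obstacle is the uniformity in the regularity step. We must choose $L$ independently of $\ell$ while the hypothesis of Proposition~\ref{prop;Nadel type} holds for each $\ell$. This works because $\tfrac{\lambda_i}{\ell!}A$ decreases, so a single ample margin suffices. Note also that the $D_{i,\ell}$ are big, so Proposition~\ref{prop;Nadel type} applies to them.
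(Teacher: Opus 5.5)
Your proof is correct and follows essentially the same route as the paper: an inclusion $\mathcal J^{A'}_{\ell'}\subseteq\mathcal J^{A}_{\ell}$ whenever the difference of the two perturbations is ample, then uniform Nadel vanishing plus Castelnuovo--Mumford regularity to turn the decreasing chain into a chain of subspaces of one fixed $H^0$, then interleaving the two sequences to get independence of $A$. The only difference is how you get the inclusion. The paper takes it directly from the base-ideal inclusion $\mathfrak{b}(|ND_j+mA'|)\subseteq\mathfrak{b}(|ND_j+m\ell A|)$, obtained by multiplying with sections of the globally generated $m(\ell A-A')$. That is more elementary than your valuative characterization, whose converse direction with real weights needs the theory of minimizers, and than your Proposition~\ref{prop:sum} and Bertini argument.
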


\begin{proof}
Let $A'$ be another ample Cartier divisor on $X$. For every sufficiently large \(\ell\), the divisor \(\ell A-A'\) is very ample. Choose a sufficiently divisible positive integer \(m\), and set \(N\coloneqq m\ell!\). For every \(j\), the image of 
\[
H^0(X,ND_j+mA')\otimes H^0(X,m(\ell A-A'))\longrightarrow H^0(X,ND_j+m\ell A)
\]
defines a sublinear series of \(|ND_j+m\ell A|\). Since \(m(\ell A-A')\) is globally generated, the base ideal of this sublinear series is \(\mathfrak{b}(|ND_j+mA'|)\). Hence, 
\[
\mathfrak{b}(|ND_j+mA'|)\subset \mathfrak{b}(|ND_j+m\ell A|).
\]
Taking products over \(j\), we obtain 
\begin{equation}\label{incl}
    \mathcal{J}_{\ell}^{A'}\subseteq \mathcal{J}_{\ell-1}^A.
\end{equation} 
By taking \(A'=A\), we have the eventual decreasing property.

Choose an ample Cartier divisor \(B\) on \(X\) such that 
\[
B-\left(K_X+\Delta+\sum_{j=1}^r \lambda_j(D_j+A)\right)
\]
is ample. Choose a very ample Cartier divisor \(H\) on \(X\). Let $d\coloneqq \dim X$. For every \(\ell\), and every \(0<i\le d\), Proposition \ref{prop;Nadel type} gives
\[
H^i(X,\mathcal{J}_{\ell}^A \otimes \OO_X(B+(d+1-i)H))=0.
\]
By the Castelnuovo--Mumford regularity, \(\mathcal{J}_{\ell}^A \otimes \OO_X(B+(d+1)H)\) is globally generated. After discarding finitely many terms, the inclusion \eqref{incl} gives a descending chain
\[
H^0(X,\mathcal{J}_{\ell+1}^A \otimes \OO_X(B+(d+1)H))\subseteq H^0(X,\mathcal{J}_{\ell}^A \otimes \OO_X(B+(d+1)H))
\]
inside the finite dimensional vector space \(H^0(X,\OO_X(B+(d+1)H))\). The descending chain of finite dimensional spaces of sections therefore stabilizes. Since each sheaf $\mathcal{J}_{\ell}^A\otimes \OO_X(B+(d+1)H)$ is globally generated, equality of these spaces of sections for all sufficiently large $\ell$ implies $\mathcal{J}_{\ell+1}^A=\mathcal{J}_{\ell}^A$ after tensoring by $\OO_X(-B-(d+1)H)$.

Interchanging \(A\) and \(A'\) shows that the two stable ideals obtained from $A$ and $A'$ contain one another. Therefore, the stable ideal is independent of the ample divisor.
\end{proof}

\begin{definition}
Let $(X,\Delta)$ be a projective klt pair, let $D_1,\dots,D_r$ be pseudoeffective $\Q$-Cartier $\Q$-divisors, and let $A$ be an ample Cartier divisor on $X$. Fix positive real numbers $\lambda_1,\dots,\lambda_r$.
\begin{itemize}
\item[(1)] We denote by $\mathcal{J}_-(X,\Delta,\lambda_1\|D_1\|,\dots,\lambda_r\|D_r\|)$ the stable element of
$$ \left\{\mathcal{J}\left(X,\Delta,\lambda_1\|D_{1,\ell}\|,\dots,\lambda_r\|D_{r,\ell}\|\right)\right\}_{\ell\in \Z_{>0}}.$$
The stable element exists and is independent of $A$ by Lemma \ref{lem;decrease}.
\item[(2)] The ideals $\mathcal{J}_-(X,\Delta,\lambda'_1\|D_1\|,\cdots,\lambda'_r\|D_r\|)$ with $\lambda_i'>\lambda_i$ form a directed collection under inclusion. The Noetherian property gives a maximum, denoted by 
\[
\mathcal{J}_{\sigma}(X,\Delta,\lambda_1\|D_1\|,\dots,\lambda_r\|D_r\|).
\]
We call the maximum the \emph{diminished multiplier ideal}. 
\end{itemize}
We use $\mathcal{J}_{-}(X,\Delta;c\|\mathbf{D}\|)$ and $\mathcal{J}_{\sigma}(X,\Delta;c\|\mathbf{D}\|)$ when all coefficients equal $c$.
\end{definition}

The following continuity result will be used in the proof of Theorem \ref{thrm:1} to pass to a limit of valuations and in the proof of Corollary \ref{coro2} to approximate a quasi-monomial valuation by divisorial valuations.

\begin{lemma}\label{lem:div}
    Let $(X,\Delta)$ be a projective klt pair, and let $\mathbf{D}$ be a tuple of pseudoeffective $\Q$-Cartier $\Q$-divisors. For every $M>0$, the function $\sigma_{\mathbf{D}}$ is finite and continuous on the set $\{\nu\in\Val_X\mid A_{X,\Delta}(\nu)\leq M\}$. Moreover,
    \[
    \lct_{\sigma}(X,\Delta,\mathbf{D})=\inf_E \frac{A_{X,\Delta}(E)}{\sigma_{\mathbf{D}}(E)},
    \]
    where $E$ runs through prime divisors over $X$. 
\end{lemma}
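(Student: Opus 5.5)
Since $\sigma_{\mathbf D}=\sum_i\sigma_\nu(D_i)$ is a finite sum, I will reduce the first assertion to a single pseudoeffective $\Q$-Cartier $\Q$-divisor $D$. The task is then to show that $\nu\mapsto\sigma_\nu(D)$ is finite and continuous on $V_M\coloneqq\{\nu\in\Val_X\mid A_{X,\Delta}(\nu)\le M\}$. The plan is to write $\sigma_\nu(D)$ as a limit, uniform on $V_M$, of continuous functions.

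\textbf{Step 1: the big case.} Suppose first that $D$ is big. Then $\sigma_\nu(D)=\nu(\|D\|)=\inf_m \nu(\mathfrak b(|mD|))/m$. Each function $\nu\mapsto\nu(\mathfrak b)$ is continuous on $\Val_X$, so this infimum is upper semicontinuous. To get continuity on $V_M$, I will use the standard uniform approximation argument of \cite{JM12} via multiplier ideals. By the subadditivity/Nadel-type comparison (Proposition \ref{prop;Nadel type} shows the asymptotic multiplier ideals behave as usual), one has $\mathfrak b(|mD|)\supseteq \mathcal J(X,\Delta;\|mD\|)\otimes \OO_X(-G)$ for a fixed Cartier $G$, uniformly in $m$; more precisely one compares with $\mathfrak b(|mD+G|)$. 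Since $\nu(\mathcal J(X,\Delta;\|mD\|))\ge m\nu(\|D\|)-A_{X,\Delta}(\nu)$, this yields
\[
\nu(\|D\|)\le \frac{\nu(\mathfrak b(|m(D+G/m)|))}{m}\le \nu(\|D\|)+\frac{A_{X,\Delta}(\nu)}{m}.
\]
Alternatively, I can use the Lipschitz-type bound $\nu(\|D\|)-\nu(\|D'\|)\le \nu(\text{effective difference})$ together with Lemma \ref{lem:sup}. The upshot is that on $V_M$ the infimum is approximated uniformly, with error $O(M/m)$, by continuous functions, and hence is continuous. Finiteness follows from Lemma \ref{lem:sup}: $\sigma_\nu(D)\le C\,A_{X,\Delta}(\nu)\le CM$.

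\textbf{Step 2: the pseudoeffective case.} For general pseudoeffective $D$, I set $D_t\coloneqq D+tA$, which is big, and have $\sigma_\nu(D)=\lim_{t\to0^+}\sigma_\nu(D_t)$, a monotone limit. I need this convergence to be uniform on $V_M$. Subadditivity gives $\sigma_\nu(D_t)\le\sigma_\nu(D_s)+\sigma_\nu((t-s)A)=\sigma_\nu(D_s)$ for $s<t$, so the family is monotone. For the reverse estimate, I plan to use the klt bound $t\nu(B)<A_{X,\Delta}(\nu)$ for an effective $B\sim_\Q A$ in the style of Lemma \ref{lem:sup}, together with homogeneity, by writing
\[
D_s=\tfrac{s}{t}D_t+\bigl(1-\tfrac{s}{t}\bigr)D .
\]
This is the main obstacle: bounding $\sigma_\nu(D_s)-\sigma_\nu(D_t)$ by $\epsilon(t)\,A_{X,\Delta}(\nu)$ with $\epsilon(t)\to0$ uniformly. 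What I will try first is the multiplier ideal comparison of Lemma \ref{lem;decrease}. The stabilized ideals $\mathcal J_-(X,\Delta;c\|D\|)$, which are globally generated after a fixed twist, give $\sigma_\nu(D)\ge \sigma_\nu(D_t)-(\text{const})\cdot A_{X,\Delta}(\nu)/c$ for $t$ small depending on $c$; letting $c\to\infty$ then gives uniformity on $V_M$. A uniform limit of continuous functions is continuous, and summing over $i$ handles $\sigma_{\mathbf D}$.

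\textbf{Step 3: the formula for $\lct_\sigma$.} The inequality $\le$ is trivial. For $\ge$, I take any $\nu$ with $\sigma_{\mathbf D}(\nu)>0$ and $A_{X,\Delta}(\nu)<\infty$; I may normalize $A_{X,\Delta}(\nu)\le M$. By \cite{JM12}, $\nu=\lim_Y r_Y(\nu)$ through the retractions onto the quasi-monomial cones of log smooth models $(Y,E)$, with $A_{X,\Delta}(r_Y\nu)\le A_{X,\Delta}(\nu)$ and $A_{X,\Delta}(r_Y\nu)\to A_{X,\Delta}(\nu)$. By the continuity just proved, $\sigma_{\mathbf D}(r_Y\nu)\to\sigma_{\mathbf D}(\nu)$. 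This reduces the claim to quasi-monomial $\nu$. On a single cone $\mathrm{QM}_\eta(Y,E)$, $A_{X,\Delta}$ is linear and $\sigma_{\mathbf D}$ is continuous, so rational points, which are multiples of divisorial valuations, approximate $\nu$. By homogeneity of both $A_{X,\Delta}$ and $\sigma_{\mathbf D}$, the ratio at these points is a ratio over prime divisors $E$. Hence the infimum over $\Val_X^*$ equals the infimum over prime divisors.
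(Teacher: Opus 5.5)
Your overall architecture is sound, and Step 3 works. Retracting onto quasi-monomial cones and then taking rational points is equivalent to the paper's use of a net of divisorial valuations from \cite[Lemma 2.3]{Leh}.

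The gap is Step 2, which carries the whole content of the continuity claim. The inequality you propose, $\sigma_\nu(D)\ge\sigma_\nu(D_t)-(\mathrm{const})\,A_{X,\Delta}(\nu)/c$, is automatic from monotonicity ($\sigma_\nu(D_t)\le\sigma_\nu(D)$) and gives no uniformity. What is needed is the reverse bound $\sigma_\nu(D)\le\sigma_\nu(D_t)+\varepsilon(t)\,A_{X,\Delta}(\nu)$, that is, an upper bound on $\sigma_\nu(D)$ itself. Neither convexity along $D_s=\frac{s}{t}D_t+(1-\frac{s}{t})D$ nor the global generation coming from Castelnuovo--Mumford regularity produces such a bound. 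For the same reason, finiteness in the pseudoeffective case does not follow from Lemma \ref{lem:sup}. That lemma bounds $\sigma_\nu(D_t)/A_{X,\Delta}(\nu)$ for each fixed $t$, but gives no bound uniform in $t$. The first inequality in your Step 1 display is also reversed. If $\mathcal J(\|mD\|)\otimes\OO_X(mD+G)$ is globally generated, then $\nu(\mathfrak b(|mD+G|))\le\nu(\mathcal J(\|mD\|))$, which bounds $\nu(\mathfrak b(|mD+G|))/m$ from above by $\nu(\|D\|)$ (on a smooth model), not from below.

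The missing idea is to sandwich a single fixed ideal, and to do so on a log resolution $f\colon Y\to X$ with $L\coloneqq f^*D$, $H$ ample on $Y$, and empty boundary.
\begin{itemize}
\item Let $\mathfrak J_c$ be the stable value of $\mathcal J(Y,0;c\|L+tH\|)$ for $t=1/\ell!$ with $\ell\gg0$, as in Lemma \ref{lem;decrease}.
\item For each such $t$ one has $\nu(\mathfrak J_c)\ge c\sigma_\nu(L+tH)-A_Y(\nu)$. Since the ideal does not depend on $t$, letting $t\to0$ gives $\nu(\mathfrak J_c)\ge c\sigma_\nu(L)-A_Y(\nu)$. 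This also proves finiteness.
\item Conversely, subadditivity on $Y$ gives $\nu(\mathfrak J_c)\le c\sigma_\nu(L+tH)\le c\sigma_\nu(L)$.
\item Hence $0\le \sigma_\nu(L)-\nu(\mathfrak J_c)/c\le A_Y(\nu)/c$. So $\sigma_\nu(L)$ is a uniform limit on $\{A_Y\le M'\}$ of the continuous functions $\nu\mapsto\nu(\mathfrak J_c)/c$, and your Step 1 becomes unnecessary.
\end{itemize}
This is the paper's argument in different packaging. There, the ideals $\mathcal J_\sigma(tL)$ form a subadditive system with controlled growth, and \cite[Corollary 6.6]{JM12} applies. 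The bound $A_{X,\Delta}\ge c_0A_Y$ then transfers continuity to $\{A_{X,\Delta}\le M\}$.

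The passage to $Y$ is not cosmetic. The upper bound $\nu(\mathfrak J_c)\le c\sigma_\nu(\cdot)$ relies on subadditivity of multiplier ideals, which on a singular klt pair holds only up to a Jacobian-ideal correction. Working directly on $(X,\Delta)$, as you do, therefore leaves that inequality unjustified.
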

\begin{proof}
    We first consider a pseudoeffective $\Q$-divisor $L$ on a smooth projective variety $Y$. Let $\mathfrak{b}_0=\OO_Y$ and for $t>0$, let $\mathfrak{b}_t\coloneqq \mathcal{J}_{\sigma}(L)$ be the diminished ideal of \cite[Definition 6.2]{Leh}. Fix an ample Cartier divisor $H$ on $Y$. Then by \cite[Theorem 4.2 and Definitions 4.3 and 6.2]{Leh}, $\mathfrak{b}_t$ is a nonzero coherent ideal and, for every $a>t$ sufficiently close to $t$, we have
    \begin{equation}\label{eq:b}
        \mathfrak{b}_t=\mathcal{J}_{-}(aL)=\mathcal{J}(a(L+\varepsilon H))
    \end{equation}
    for all sufficiently small rational $\varepsilon>0$.

    For $s,t>0$, choose $\eta>0$ and then a rational number $\varepsilon>0$ sufficiently small such that $\mathfrak{b}_u=\mathcal{J}((1+\eta)u(L+\varepsilon H))$ for $u\in \{s,t,s+t\}$. By subadditivity, we have $\mathfrak{b}_{s+t}\subseteq \mathfrak{b}_s\mathfrak{b}_t$, see \cite[Section 2.3]{JM12}. 

    Fix a nontrivial $\nu\in \Val_Y$ with $A_Y(\nu)<\infty$. For $t>0$, choose a rational number $a>t$ for which \eqref{eq:b} holds. By \cite[Proposition 4.7]{Leh}, we have
    \begin{equation}\label{eq:a}
        a\sigma_{\nu}(L)\leq \nu(\mathfrak{b}_t)+A_Y(\nu).
    \end{equation}
    In particular, $\sigma_{\nu}(L)$ is finite. For the graded sequence of base ideals of $L+\varepsilon H$, \cite[Lemma 2.6 and Proposition 6.2]{JM12} give
    \[
    \nu(\mathcal{J}(a(L+\varepsilon H)))\leq a\sigma_{\nu}(L+\varepsilon H).
    \]
    By \eqref{eq:b}, letting $\varepsilon\to 0^+$ and then $a\downarrow t$ through rational numbers yields $\nu(\mathfrak{b}_t)\leq t\sigma_{\nu}(L)$. Since $a>t$, \eqref{eq:a} also gives $\nu(\mathfrak{b}_t)>t\sigma_{\nu}(L)-A_Y(\nu)$ when $\sigma_{\nu}(L)>0$. When $\sigma_{\nu}(L)=0$, this strict inequality follows from $\nu(\mathfrak{b}_t)\geq 0$ and $A_Y(\nu)>0$. Therefore, we have
    \[
    \nu(\mathfrak{b}_{\bullet})\coloneqq \lim_{t\to\infty} \frac{\nu(\mathfrak{b}_t)}{t}=\sigma_{\nu}(L).
    \]
    For every prime divisor $F$ over $Y$ and every $t>0$, we therefore obtain 
    \[
    \ord_F(\mathfrak{b}_t)>t\ord_F(\mathfrak{b}_{\bullet})-A_Y(F).
    \]
    Thus, $\mathfrak{b}_{\bullet}$ has controlled growth as in \cite[Definition 2.9]{JM12}. By \cite[Corollary 6.6]{JM12}, the function $\nu\mapsto \sigma_{\nu}(L)$ is continuous on every subset of $\Val_Y$ on which $A_Y$ is bounded.

    Now, let $f\colon Y\to X$ be a log resolution of $(X,\Delta)$, and write $K_Y+\Delta_Y=f^{\ast}(K_X+\Delta)$. Let $B$ be the reduced divisor with support $\Supp(\Delta_Y)$. Choose $0<c<1$ such that $\Delta_Y\leq (1-c)B$. Since $(Y,B)$ is log canonical, we have
    \[
    A_{X,\Delta}(\nu)=A_Y(\nu)-\nu(\Delta_Y)\geq A_Y(\nu)-(1-c)\nu(B)\geq cA_Y(\nu).
    \]
    Note that the morphism $f$ induces a homeomorphism $\Val_Y\to \Val_X$, and $\sigma_{\nu}(f^{\ast}D_i)=\sigma_{\nu}(D_i)$ for every $i$. On the set $\{\nu\in \Val_X\mid A_{X,\Delta}(\nu)\leq M\}$, we have $A_Y(\nu)\leq \frac{M}{c}$. By the smooth case, each function $\nu\mapsto \sigma_{\nu}(D_i)$ is finite and continuous on this set. Their finite sum $\sigma_{\mathbf{D}}$ is therefore finite and continuous.

    For the last assertion, fix $\nu\in \Val_X^{\ast}$ with $A_{X,\Delta}(\nu)<\infty$ and $\sigma_{\mathbf{D}}>0$. By \cite[Lemma 2.3]{Leh}, there is a net of divisorial valuations $\nu_{\alpha}$ on $Y$ such that $\nu_{\alpha}\to \nu$ and $A_Y(\nu_{\alpha})\to A_Y(\nu)$. Since $\Delta_Y$ is a fixed $\Q$-divisor on $Y$, the function $\nu\mapsto \nu(\Delta_Y)$ is continuous. Thus, $A_{X,\Delta}(\nu_{\alpha})\to A_{X,\Delta}(\nu)$, and there exists $M>0$ such that $A_{X,\Delta}(\nu_{\alpha})\leq M$ for all sufficiently large $\alpha$. Therefore, the continuity of $\sigma_{\mathbf{D}}$ gives
    \[
    \frac{A_{X,\Delta}(\nu_{\alpha})}{\sigma_{\mathbf{D}}(\nu_{\alpha})}\longrightarrow \frac{A_{X,\Delta}(\nu)}{\sigma_{\mathbf{D}}(\nu)}.
    \]
    Each $\nu_{\alpha}$ is of the form $\nu_{\alpha}=c_{\alpha}\ord_{E_{\alpha}}$ for some prime divisor $E_{\alpha}$ over $X$ and some real number $c_{\alpha}>0$. Since both $A_{X,\Delta}$ and $\sigma_{\mathbf{D}}$ are homogeneous of degree one, we have
    \[ 
    \frac{A_{X,\Delta}(\nu_\alpha)}{\sigma_{\mathbf D}(\nu_\alpha)}=\frac{A_{X,\Delta}(E_\alpha)}{\sigma_{\mathbf D}(E_\alpha)}.
    \]
    Therefore, we obtain $\inf_E\frac{A_{X,\Delta}(E)}{\sigma_{\mathbf D}(E)}\leq \frac{A_{X,\Delta}(\nu_\alpha)}{\sigma_{\mathbf D}(\nu_\alpha)}$ for every $\alpha$, and by taking the limit, we have $\inf_E\frac{A_{X,\Delta}(E)}{\sigma_{\mathbf D}(E)}\leq \frac{A_{X,\Delta}(\nu)}{\sigma_{\mathbf D}(\nu)}$.
    Since $\nu$ is arbitrary, we therefore have 
    \[
    \inf_E \frac{A_{X,\Delta}(E)}{\sigma_{\mathbf{D}}(E)}\leq \lct_{\sigma}(X,\Delta,\mathbf{D}).
    \]

    Conversely, for every prime divisor $E$ over $X$, the divisorial valuation $\ord_E$ has finite log discrepancy. Hence, $\lct_{\sigma}(X,\Delta,\mathbf{D})\leq \frac{A_{X,\Delta}(E)}{\sigma_{\mathbf{D}}(E)}$. By taking the infimum over all prime divisors $E$ over $X$, we obtain
    \[
    \lct_{\sigma}(X,\Delta,\mathbf{D})\leq \inf_E \frac{A_{X,\Delta}(E)}{\sigma_{\mathbf{D}}(E)},
    \]
    and this completes the proof.
\end{proof}

\begin{lemma}\label{lem:decreasing}
    Let $\mathbf{D}=(D_1,\dots,D_r)$ be a tuple of pseudoeffective $\Q$-Cartier $\Q$-divisors on a projective klt pair $(X,\Delta)$. Fix an ample Cartier divisor $A$, and let $\mathbf{D}_{\ell}\coloneqq (D_1+\frac{A}{\ell!},\dots,D_r+\frac{A}{\ell!})$. Then
    \[
    \lct_{\sigma}(X,\Delta,\mathbf{D}_{\ell})\downarrow\lct_{\sigma}(X,\Delta,\mathbf{D}).
    \]
\end{lemma}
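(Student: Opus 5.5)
We need to show that $\lct_{\sigma}(X,\Delta,\mathbf{D}_\ell)$ is non-increasing in $\ell$ and converges to $\lct_{\sigma}(X,\Delta,\mathbf{D})$. By Lemma \ref{lem:div}, every threshold in sight can be computed over prime divisors $E$ over $X$, so the plan is to work divisor by divisor and then control the infimum.

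\textbf{Monotonicity.} For each prime divisor $E$, the divisor $D_i+\frac{A}{\ell!}$ differs from $D_i+\frac{A}{(\ell+1)!}$ by the ample class $\left(\frac{1}{\ell!}-\frac{1}{(\ell+1)!}\right)A$. Subadditivity of $\sigma_E$, together with $\sigma_E(\text{ample})=0$, gives
\[
\sigma_E\Bigl(D_i+\tfrac{A}{\ell!}\Bigr)\leq \sigma_E\Bigl(D_i+\tfrac{A}{(\ell+1)!}\Bigr)\leq \sigma_E(D_i).
\]
Hence $\sigma_{\mathbf{D}_\ell}(E)$ is non-decreasing in $\ell$ and bounded by $\sigma_{\mathbf{D}}(E)$. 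By the definition of $\sigma_E$ on pseudoeffective classes as $\lim_{t\to0^+}\sigma_E(D+tA)$, which is independent of $A$, this sequence converges to $\sigma_{\mathbf{D}}(E)$.

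Therefore the ratios $A_{X,\Delta}(E)/\sigma_{\mathbf{D}_\ell}(E)$ are non-increasing in $\ell$ and bounded below by $A_{X,\Delta}(E)/\sigma_{\mathbf{D}}(E)$. Taking infima over $E$ shows that the sequence $\lct_{\sigma}(X,\Delta,\mathbf{D}_\ell)$ is non-increasing and satisfies $\lct_\sigma(X,\Delta,\mathbf{D}_\ell)\geq \lct_\sigma(X,\Delta,\mathbf{D})$. Consequently the limit $L\coloneqq\lim_\ell \lct_\sigma(X,\Delta,\mathbf{D}_\ell)$ exists and $L\ge \lct_\sigma(X,\Delta,\mathbf{D})$.

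\textbf{The reverse inequality.} Suppose $c>\lct_\sigma(X,\Delta,\mathbf{D})$, and treat the value $+\infty$ trivially. Choose a prime divisor $E$ with $A_{X,\Delta}(E)/\sigma_{\mathbf{D}}(E)<c$. Since $\sigma_{\mathbf{D}_\ell}(E)\to\sigma_{\mathbf{D}}(E)>0$ and $A_{X,\Delta}(E)$ is a fixed finite number, we get $A_{X,\Delta}(E)/\sigma_{\mathbf{D}_\ell}(E)<c$ for all $\ell\gg0$. Hence $\lct_\sigma(X,\Delta,\mathbf{D}_\ell)<c$ for $\ell\gg0$, so $L\le c$. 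Letting $c\downarrow\lct_\sigma(X,\Delta,\mathbf{D})$ gives equality. No uniformity in $E$ is needed, because the infimum is approached by a single fixed divisor.

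\textbf{Main obstacle.} The only delicate point is the convergence $\sigma_E(D_i+tA)\to\sigma_E(D_i)$ along the specific sequence $t=1/\ell!$. This follows from the definition of $\sigma_\nu$ on pseudoeffective divisors via the limit over rational $t\to 0^+$ (\cite[III.1.5]{Nak}), combined with the monotonicity in $t$ established above. The finiteness of $\sigma_E(D_i)$ is provided by Lemma \ref{lem:div}.
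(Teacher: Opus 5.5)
Your proof is correct and follows the same approach as the paper. The paper observes that $\sigma_{\mathbf{D}_\ell}(\nu)$ increases to $\sigma_{\mathbf{D}}(\nu)$ for each $\nu$ and then interchanges the limit with the infimum as in \cite[Lemma 2.11]{CJKL}, which is what your monotonicity and fixed-divisor arguments spell out; your restriction to prime divisors via Lemma~\ref{lem:div} is a harmless convenience that causes no circularity.
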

\begin{proof}
    For each $\nu$, $\sigma_{\mathbf{D}_{\ell}}(\nu)$ increases to $\sigma_{\mathbf{D}}(\nu)$. As in the argument of \cite[Lemma 2.11]{CJKL}, we obtain the assertion.
\end{proof}

\begin{proposition} \label{thickening}
Let $(X,\Delta)$ be a projective klt pair, and let $\mathbf{D}=(D_1,\cdots,D_r)$ be a tuple of pseudoeffective \(\Q\)-Cartier $\Q$-divisors. Let \(\lambda\coloneqq \lct_{\sigma}(X,\Delta,\mathbf D)\), and assume that $0<\lambda<\infty$. Fix an ample Cartier divisor $A$, and let $\mathbf{D}_{\ell}\coloneqq (D_1+\frac{A}{\ell!},\dots,D_r+\frac{A}{\ell!})$ and $\lambda_{\ell}\coloneqq \lct_{\sigma}(X,\Delta,\mathbf{D}_{\ell})$. Let $x$ be the generic point of an irreducible component of $Z\left(\mathcal{J}_{\sigma}\left(X,\Delta,\lambda\|\mathbf{D}\|\right)\right)$.

Then there exist $\varepsilon_0>0$, a positive integer $p$, and graded sequences $\mathfrak{c}_{\bullet,\ell}$ of ideals in $\mathcal{O}_{X,x}$ such that, for every $0<\varepsilon'_0<\frac{\varepsilon_0}{2}$, there exists an integer $\ell_0=\ell_0(\varepsilon_0')$ such that the following assertions hold for all $\ell\ge \ell_0$:
\begin{itemize}
    \item[(1)] $\lambda_{\ell}\leq \mathrm{lct}(X_x,\Delta_x,\mathfrak{c}_{\bullet,\ell})\le (1+\varepsilon'_0)\lambda_{\ell}$,
    \item[(2)] $\mathfrak{m}^p_x\subseteq \mathfrak{c}_{1,\ell}$, and
    \item[(3)] for every \(\nu\in \Val_{X,x}\), 
    $$\nu(\mathfrak{c}_{\bullet,\ell})=\min\left\{\sigma_{\mathbf{D}_{\ell}}(\nu),p\nu(\mathfrak{m}_x)\right\}.$$
\end{itemize}
The integer $p$ is independent of $\varepsilon_0'$ and $\ell$.
\end{proposition}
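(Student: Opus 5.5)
The plan is to follow the thickening construction of \cite[Proposition 3.10 / Lemma 3.11]{CJKL}, working with the graded sequences $\mathfrak a_\bullet(\mathbf D_\ell)$ of Lemma \ref{lem-eq}, localized at $x$. Set
\[
\mathfrak c_{m,\ell}\coloneqq \sum_{i+j=m}\mathfrak a_i(\mathbf D_\ell)_x\cdot \mathfrak m_x^{pj}.
\]
This is a graded sequence: products of the summands land in the right degree because both $\mathfrak a_\bullet(\mathbf D_\ell)$ and $\mathfrak m_x^{p\bullet}$ are graded. Assertion (2) is immediate, since the summand with $i=0,\ j=1$ is $\mathfrak m_x^p\subseteq\mathfrak c_{1,\ell}$. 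For (3), the standard formula for the valuation of such a ``sum'' graded sequence gives
\[
\nu(\mathfrak c_{\bullet,\ell})=\min\{\nu(\mathfrak a_\bullet(\mathbf D_\ell)),\,p\nu(\mathfrak m_x)\}.
\]
The inequality $\le$ is clear from the two extreme summands. For $\ge$, note that $\nu(\mathfrak a_i)+pj\,\nu(\mathfrak m_x)\ge i\,\nu(\mathfrak a_\bullet)+jp\,\nu(\mathfrak m_x)\ge m\min\{\cdot,\cdot\}$. Combined with Lemma \ref{lem-eq}, this gives $\nu(\mathfrak a_\bullet(\mathbf D_\ell))=\sigma_{\mathbf D_\ell}(\nu)$.

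For (1), the lower bound is formal. By (3), $\nu(\mathfrak c_{\bullet,\ell})\le \sigma_{\mathbf D_\ell}(\nu)$ for every $\nu\in\Val_{X,x}$, so
\[
\mathrm{lct}(X_x,\Delta_x,\mathfrak c_{\bullet,\ell})\ge \inf_{\nu\in\Val_{X,x}}A/\sigma_{\mathbf D_\ell}\ge\lambda_\ell.
\]
Here we use that the lct on $X_x$ is an infimum over valuations centered at $x$.

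The upper bound is the real content, and it is where $x$, $p$ and $\varepsilon_0$ enter. First, since $x$ is a generic point of a component of $Z(\mathcal J_\sigma(X,\Delta,\lambda\|\mathbf D\|))$, unwinding the definitions gives some $\lambda'$ slightly larger than $\lambda$ and some large $\ell$ with $\mathcal J(X,\Delta;\lambda'\|\mathbf D_\ell\|)_x\ne\OO_{X,x}$. We then need this at a uniform level. Choose $\varepsilon_0$ with $\lambda(1+\varepsilon_0)\ge\lambda'$ for a $\lambda'$ for which this nontriviality holds for all large $\ell$; this uses the stabilization of Lemma \ref{lem;decrease} together with the fact that $\mathcal J_\sigma$ is the maximum over $\lambda'>\lambda$.

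Nontriviality of the multiplier ideal at $x$ yields, via a log resolution computing $\mathcal J(X,\Delta;\mathfrak a_m(\mathbf D_\ell)^{\lambda'/m})$, a prime divisor $E_\ell$ centered at $x$ with
\[
A_{X,\Delta}(E_\ell)\le \lambda'\,\sigma_{\mathbf D_\ell}(E_\ell).
\]
For this $E_\ell$ we must also compare with the $\mathfrak m_x$ term, which requires $p\,\ord_{E_\ell}(\mathfrak m_x)\ge\sigma_{\mathbf D_\ell}(E_\ell)$. Here $p$ is chosen using Lemma \ref{lem:sup}, which gives uniform boundedness of $\sigma/A$ because the $\mathbf D_\ell$ are dominated by $\mathbf D_1$, together with an Izumi-type bound $A_{X,\Delta}(\nu)\le C\,\nu(\mathfrak m_x)$ for $\nu\in\Val_{X,x}$ on the klt germ. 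Taking $p\ge C\cdot\sup_\nu \sigma_{\mathbf D_1}/A$ gives $p\,\nu(\mathfrak m_x)\ge\sigma_{\mathbf D_\ell}(\nu)$ for all $\nu$ centered at $x$. Then $\nu(\mathfrak c_{\bullet,\ell})=\sigma_{\mathbf D_\ell}(\nu)$ on $\Val_{X,x}$, and
\[
\mathrm{lct}(\mathfrak c_{\bullet,\ell})\le A(E_\ell)/\sigma_{\mathbf D_\ell}(E_\ell)\le\lambda'.
\]
Finally, Lemma \ref{lem:decreasing} gives $\lambda_\ell\downarrow\lambda$. Given $\varepsilon_0'<\varepsilon_0/2$, we pick $\lambda'\le(1+\varepsilon_0')\lambda\le(1+\varepsilon_0')\lambda_\ell$ and then $\ell_0$ large enough that nontriviality holds at level $\lambda'$. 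This yields (1).

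The main obstacle is the step producing nontriviality of $\mathcal J(X,\Delta;\lambda'\|\mathbf D_\ell\|)$ at $x$ uniformly for all $\ell\ge\ell_0$ with $\lambda'$ arbitrarily close to $\lambda$. To get it, I would use the monotonicity $\mathcal J_\ell\supseteq\mathcal J_{\ell+1}$ from \eqref{incl} together with the identification of $\mathcal J_\sigma$ as $\mathcal J_-$ at a slightly larger coefficient. If the Izumi step is problematic for singular $x$, I would instead pass to a log resolution and use $A_{X,\Delta}\ge cA_Y$ as in Lemma \ref{lem:div}.
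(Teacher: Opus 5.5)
\textbf{There is a genuine gap: the choice of $p$.} Your construction of $\mathfrak c_{\bullet,\ell}$, your proofs of (2) and (3), and the lower bound in (1) are fine and agree with the paper. The upper bound in (1) is where the argument breaks.

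\textbf{The Izumi-type bound is false.} You claim $A_{X,\Delta}(\nu)\le C\,\nu(\mathfrak m_x)$ on $\Val_{X,x}$. The definition of $\lct(X_x,\Delta_x;\mathfrak m_x)$ only gives the reverse inequality $A_{X,\Delta}(\nu)\ge c\,\nu(\mathfrak m_x)$. Already on a smooth surface, the monomial valuation with weights $(t,1)$ at a closed point has $\nu(\mathfrak m_x)=1$ but $A(\nu)=t+1$.

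\textbf{The inequality you actually use is also false.} You need $p\,\nu(\mathfrak m_x)\ge\sigma_{\mathbf D_\ell}(\nu)$ for all $\nu\in\Val_{X,x}$. This fails in the typical case where $x$ has codimension $\ge 2$ and lies on a prime divisor $G$ with $\sigma_G(D_i+\tfrac{1}{\ell!}A)>0$, for instance when $x$ is an isolated point of $Z(\mathcal J_\sigma)$ on the support of the negative part. Take $\nu_t$ monomial with weight $t$ on a local equation of $G$ and weight $1$ on the other coordinates. Then $\nu_t(\mathfrak m_x)=1$, while $\sigma_{\mathbf D_\ell}(\nu_t)\ge t\,\sigma_G(D_i+\tfrac1{\ell!}A)\to\infty$.

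Your $p$ also never uses the hypothesis on $x$. If such a $p$ existed, $\mathfrak c_{\bullet,\ell}$ and $\mathfrak a_\bullet(\mathbf D_\ell)$ would have the same valuations at $x$, and the thickening would do nothing. Separately, note that $\sigma_{\mathbf D_1}\le\sigma_{\mathbf D_\ell}\le\sigma_{\mathbf D}$, so the $\mathbf D_\ell$ are dominated by $\mathbf D$, not by $\mathbf D_1$. The uniform bound $\sigma_{\mathbf D_\ell}\le A_{X,\Delta}/\lambda$ comes from $\lambda>0$.

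\textbf{What is needed instead.} You need a lower bound on $\nu(\mathfrak m_x)$ only for valuations that (nearly) compute the local threshold. It must come from $x$ being a generic point of a component of $Z(\mathcal I)$, where $\mathcal I\coloneqq\mathcal J_\sigma(X,\Delta;\lambda\|\mathbf D\|)$, so that $\mathfrak m_x^n\subseteq\mathcal I\,\OO_{X,x}$.

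This requires more than nontriviality at a single level $\lambda'$. The paper chooses $\varepsilon_0$ with $\mathcal I=\mathcal J_-(X,\Delta;(1+2\varepsilon_0)\lambda\|\mathbf D\|)$. A sandwich argument then shows $\mathcal I=\mathcal J(X,\Delta;(1+\varepsilon)\lambda_\ell\|\mathbf D_\ell\|)$ for all $\varepsilon\in[\varepsilon_0',\varepsilon_0]$ and $\ell\ge\ell_0$.

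With this in hand, $\mathfrak m_x^n\subseteq\mathcal J(X,\Delta;(1+\varepsilon_0)\lambda_\ell\|\mathbf D_\ell\|)$ gives valuatively
$n\,\nu(\mathfrak m_x)+A_{X,\Delta}(\nu)\ge(1+\varepsilon_0)\lambda_\ell\,\sigma_{\mathbf D_\ell}(\nu)$.
Hence every $\nu$ with $A_{X,\Delta}(\nu)\le(1+\varepsilon_0')\lambda_\ell\,\sigma_{\mathbf D_\ell}(\nu)$ satisfies $n\,\nu(\mathfrak m_x)\ge\tfrac{\varepsilon_0\lambda}{2}\,\sigma_{\mathbf D_\ell}(\nu)$. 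So $p=\lceil 2n/(\varepsilon_0\lambda)\rceil+1$ works for such $\nu$, uniformly in $\varepsilon_0'$ and $\ell$.

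The paper phrases this as the gap $\beta_\ell-\alpha_\ell\ge\varepsilon_0\lambda/2$ between the mixed threshold $\lct^{\mathfrak m_x^n}$ and the local threshold $\alpha_\ell$. It applies this to a minimizer $\omega$ of $\alpha_\ell$ (from Xu's theorem), which gives $p\,\omega(\mathfrak m_x)>\omega(\mathfrak a_\bullet(\mathbf D_\ell))$ and hence $\lct(\mathfrak c_{\bullet,\ell})=\alpha_\ell\le(1+\varepsilon_0')\lambda_\ell$.

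\textbf{A smaller point.} Nontriviality of $\mathcal J(X,\Delta;\lambda'\|\mathbf D_\ell\|)$ at $x$ only yields divisors with $A(E)\le\tfrac{\lambda'}{m}\ord_E(\mathfrak a_m)$, not $A(E)\le\lambda'\sigma_{\mathbf D_\ell}(E)$. You should go through the threshold characterization, i.e.\ $\alpha_\ell\le\lambda'$, and then take a minimizer or near-minimizer.
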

\begin{proof}
We follow \cite[Proof of Proposition 3.15]{CJKL}. The proof has four steps.

\noindent \textbf{Step 1.} Let $\mathcal{I}\coloneqq \mathcal{J}_{\sigma}(X,\Delta;\lambda\|\mathbf{D}\|)$ and $\mathcal{J}_{\ell}(t)\coloneqq \mathcal{J}(X,\Delta;t\|\mathbf{D}_{\ell}\|)$. By Lemma \ref{lem:decreasing}, we have $\lambda_{\ell}\downarrow \lambda$ and hence $\lambda_{\ell}<\infty$ for sufficiently large $\ell$. For any $t>\lambda$, we have $\mathcal{J}_{\ell}(t)\neq \OO_X$ for sufficiently large $\ell$ by Lemma \ref{lem-eq} and \cite[Theorem 3.7]{CJKL}. Thus, $\mathcal{J}_{-}(X,\Delta;t\|\mathbf{D}\|)\neq \OO_X$ and $\mathcal{I}$ is nonzero and proper.

Choose $\varepsilon_0>0$ such that
\begin{equation}\label{eq:I}
    \mathcal{I}=\mathcal{J}_{-}(X,\Delta;(1+2\varepsilon_0)\lambda\|\mathbf{D}\|).
\end{equation}
Fix $0<\varepsilon_0'<\frac{\varepsilon_0}{2}$. For all sufficiently large $\ell$ we have
\begin{equation}\label{eq:1plus}
    (1+\varepsilon)\lambda_{\ell}\leq (1+2\varepsilon_0)\lambda
\end{equation}
for $\varepsilon\in [\varepsilon_0',\varepsilon_0]$. 
We may also assume that
\begin{equation}\label{eq:Jl}
    \mathcal{J}_{\ell}((1+\varepsilon_0')\lambda)=\mathcal{J}_{-}(X,\Delta;(1+\varepsilon_0')\lambda\|\mathbf{D}\|).
\end{equation}
Thus, by \eqref{eq:I}, \eqref{eq:1plus}, and \eqref{eq:Jl}, we obtain
\begin{align*}
    \mathcal{I}&\subseteq \mathcal{J}_{-}(X,\Delta;(1+\varepsilon)\lambda_{\ell}\|\mathbf{D}\|)\\
    &\subseteq \mathcal{J}_{\ell}((1+\varepsilon)\lambda_{\ell})\\
    &\subseteq \mathcal{J}_{\ell}((1+\varepsilon_0')\lambda)\\
    &=\mathcal{J}_{-}(X,\Delta;(1+\varepsilon_0')\lambda\|\mathbf{D}\|)\subseteq \mathcal{I},
\end{align*}
where the last inclusion follows from the definition of $\mathcal{J}_{\sigma}$. Hence, for all $\ell\geq \ell_0'(\varepsilon_0')$ and all $\varepsilon\in [\varepsilon_0',\varepsilon_0]$, we obtain
\begin{equation}\label{eq:step1concl}
    \mathcal{I}=\mathcal{J}_{\ell}((1+\varepsilon)\lambda_{\ell}).
\end{equation}

\smallskip

\noindent \textbf{Step 2.} Let $R\coloneqq \OO_{X,x}$ and $\mathfrak{b}_{m,\ell}\coloneqq \mathfrak{a}_m(\mathbf{D}_{\ell})R$. In particular, we have $\mathfrak{b}_{0,\ell}=R$. Write $\alpha_{\ell}\coloneqq \lct(X_x,\Delta_x,\mathfrak{b}_{\bullet,\ell})$. Since $\mathcal{I}R\neq R$, by \eqref{eq:step1concl}, \cite[Theorem 3.7 and Lemma 3.14]{CJKL} and Lemma \ref{lem-eq}, we have
\begin{equation}\label{eq:step2concl}
\lambda_{\ell}\leq \alpha_{\ell}\leq (1+\varepsilon_0')\lambda_{\ell}.    
\end{equation}
In particular, $0<\alpha_{\ell}<\infty$.

\noindent \textbf{Step 3.} The choice of $x$ implies that $\mathcal{I}R$ is $\mathfrak{m}_x$-primary. Choose an integer $n>0$ such that $\mathfrak{m}_x^n\subseteq \mathcal{I}R$, and let $\beta_{\ell}\coloneqq \lct^{\mathfrak{m}_x^n}(X_x,\Delta_x,\mathfrak{b}_{\bullet,\ell})$. By \eqref{eq:step1concl} and \cite[Theorem 3.7]{CJKL}, we obtain $\beta_{\ell}\geq (1+\varepsilon_0)\lambda_{\ell}$. Together with \eqref{eq:step2concl}, this gives
\begin{equation}
    \beta_{\ell}-\alpha_{\ell}\geq \frac{\varepsilon_0\lambda_{\ell}}{2}\geq \frac{\varepsilon_0\lambda}{2}.
\end{equation}
Choose $p\coloneqq \ceil{\frac{2n}{\varepsilon_0\lambda}}+1$. Then $p>\frac{n}{\beta_{\ell}-\alpha_{\ell}}$ for every $\ell\geq\ell_0'(\varepsilon_0')$, and $p$ is independent of $\varepsilon_0'$.

\smallskip

\noindent \textbf{Step 4.} Define ideals of $R$ by
\[
\mathfrak{c}_{m,\ell}\coloneqq \sum_{i=0}^m \mathfrak{b}_{i,\ell}\mathfrak{m}_x^{p(m-i)}.
\]
Note that $\mathfrak{c}_{\bullet,\ell}$ is graded and the first term gives $\mathfrak{m}_x^{pm}\subseteq \mathfrak{c}_{m,\ell}$. For a valuation $\nu$ centered on $X_x$, we therefore obtain that
\[
\nu(\mathfrak{c}_{m,\ell})=\min_{0\leq i\leq m}\{\nu(\mathfrak{b}_{i,\ell})+p(m-i)\nu(\mathfrak{m}_x)\}.
\]
Since $\nu(\mathfrak{b}_{i,\ell})\geq i\nu(\mathfrak{b}_{\bullet,\ell})$, we obtain $\frac{\nu(\mathfrak{c}_{m,\ell})}{m}\geq \min\{\nu(\mathfrak{b}_{\bullet,\ell}),p\nu(\mathfrak{m}_x)\}$. The first and last terms give the reverse inequality, and Lemma \ref{lem-eq} proves the assertion (3).

Note that the local thresholds of $\mathfrak{b}_{\bullet,\ell}$ and $\mathfrak{c}_{\bullet,\ell}$ are equal. Choose, by \cite[Theorem 1.1]{Xu20}, a valuation $\omega$ computing $\alpha_{\ell}$, normalized by $\omega(\mathfrak{b}_{\bullet,\ell})=1$.  Now, from the definition of $\beta_{\ell}$, we have $\beta_{\ell}\leq A_{X,\Delta}(\omega)+n\omega(\mathfrak{m}_x)=\alpha_{\ell}+n\omega(\mathfrak{m}_x)$. Thus, $p\omega(\mathfrak{m}_x)>1$ and assertion (3) gives $\omega(\mathfrak{c}_{\bullet,\ell})=1$. Since $\mathfrak{b}_{m,\ell}\subseteq \mathfrak{c}_{m,\ell}$, we conclude that
\begin{equation}\label{eq:step4concl}
    \alpha_{\ell}\leq \lct(X_x,\Delta_x,\mathfrak{c}_{\bullet,\ell})\leq A_{X,\Delta}(\omega)=\alpha_{\ell}.
\end{equation}
Combining \eqref{eq:step4concl} with \eqref{eq:step2concl} gives the assertion (1).
\end{proof}

\subsection{Log canonical thresholds associated to tuples}

In this subsection, we give proofs of Theorem \ref{thrm:1} and Corollary \ref{coro}.

\begin{proof}[Proof of Theorem \ref{thrm:1}]
We adapt the argument of \cite[Proof of Theorem 1.1]{CJKL}. By Lemma \ref{lem:div}, $\lambda\coloneqq \lct_{\sigma}(X,\Delta,\mathbf{D})\geq 1$. 

\smallskip

\noindent\textbf{Step 1.} Fix an ample Cartier divisor $A$ and write $\mathbf{D}_{\ell}\coloneqq (D_1+\frac{A}{\ell!},\dots,D_r+\frac{A}{\ell!})$. Choose a generic point $x$ of an irreducible component of $Z(\mathcal{J}_{\sigma}(X,\Delta;\lambda\|\mathbf{D}\|))$. The proof of Proposition \ref{thickening} shows that this closed subset is nonempty. Let $\varepsilon_0,p$, and $\mathfrak{c}_{\bullet,\ell}$ be as in Proposition \ref{thickening}.

Choose a sequence $\varepsilon_j\downarrow 0$ with $0<\varepsilon_j<\frac{\varepsilon_0}{2}$, and choose a strictly increasing sequence of integers $k_j\geq \ell_0(\varepsilon_j)$. Let
\[
\mu_j\coloneqq \lct(X_x,\Delta_x,\mathfrak{c}_{\bullet,k_j}),\quad \lambda_{k_j}\coloneqq \lct_{\sigma}(X,\Delta,\mathbf{D}_{k_j}).
\]
Then $\lambda_{k_j}\leq \mu_j\leq (1+\varepsilon_j)\lambda_{k_j}$, and Lemma \ref{lem:decreasing} gives $\mu_j\to \lambda$.

By \cite[Theorem 1.1]{Xu20}, choose a quasi-monomial valuation $\nu_j$ computing $\mu_j$, normalized by $A_{X,\Delta}(\nu_j)=1$. Since $\mathfrak{m}_x^{pm}\subseteq \mathfrak{c}_{m,k_j}$, any valuation centered on $X_x$ away from $x$ evaluates $\mathfrak{c}_{\bullet,k_j}$ to zero. Thus, $c_X(\nu_j)=x$, and
\begin{equation}
    \nu_j(\mathfrak{m}_x)\geq \frac{1}{p}\nu_j(\mathfrak{c}_{\bullet,k_j})=\frac{1}{p\mu_j}.
\end{equation}
Choose $C>\sup_j \mu_j$ and set $\delta\coloneqq \frac{1}{2pC}$. Then all $\nu_j$ belong to
\[
W\coloneqq \{\nu\in \Val_{X,x}\mid \nu(\mathfrak{m}_x)\geq \delta, A_{X,\Delta}(\nu)\leq 1\}.
\]

\noindent\textbf{Step 2.} 
We follow the argument of \cite[Proposition 5.9]{JM12}. 

The set $W$ is compact. For each $0\neq f\in R\coloneqq \OO_{X,x}$, we have $c_f\coloneqq \lct(X_x,\Delta_x;(f))>0$ since the given pair is klt. Every $\nu\in W$ satisfies $0\leq \nu(f)\leq \frac{1}{c_f}$. Therefore, $W$ is contained in the product $\Pi_{0\neq f\in R} [0,\frac{1}{c_f}]$.

The values are finite on every nonzero element of $R$, and therefore the limit in this product extends to a valuation of $k(X)$. Also, every $\nu\in W$ satisfies $\nu(f)\geq \delta$ for $f\in \mathfrak{m}_x\setminus \{0\}$. Hence, the center of this limit is $x$. The function $\nu\mapsto \nu(\mathfrak{m}_x)$ is continuous since $\mathfrak{m}_x$ is finitely generated. Thus, $W$ is closed in the product and is therefore compact. Choose a convergent subsequence of $(\nu_j)$, with limit $\nu_0\in W$.

\smallskip

\noindent\textbf{Step 3.} Proposition \ref{thickening} gives $0<\nu_j(\mathfrak{c}_{\bullet,k_j})\leq \sigma_{\mathbf{D}_{k_j}}(\nu_j)\leq \sigma_{\mathbf{D}}(\nu_j)$. Since $A_{X,\Delta}(\nu_j)=1$, we obtain
\[
\lambda\leq \frac{1}{\sigma_{\mathbf{D}}(\nu_j)}\leq \frac{1}{\nu_j(\mathfrak{c}_{\bullet,k_j})}=\mu_j\longrightarrow \lambda.
\]
By Lemma \ref{lem:div}, we have $\sigma_{\mathbf{D}}(\nu_0)=\frac{1}{\lambda}$. Since $0<A_{X,\Delta}(\nu_0)\leq 1$, the definition of $\lambda$ gives
\[
\lambda\leq \frac{A_{X,\Delta}(\nu_0)}{\sigma_{\mathbf{D}}(\nu_0)}\leq \lambda.
\]
Thus, $\nu_0$ computes $\lambda$.

\smallskip

\noindent\textbf{Step 4}. For $m\geq 1$, let $\mathfrak{a}_m(\nu_0)\coloneqq \{f\in R\mid \nu_0(f)\geq m\}$ and $\mathfrak{a}_0(\nu_0)=R$. Then $\nu_0(\mathfrak{a}_{\bullet}(\nu_0))=1$. 

Let $\omega$ be a valuation centered on $X_x$ with $\omega(\mathfrak{a}_{\bullet}(\nu_0))=1$. For every $f\in R$ with $\nu_0(f)>0$, we have $n\omega(f)\geq \floor{n\nu_0(f)}$. Hence, $\omega(f)\geq \nu_0(f)$ for every $f\in R$, and $\omega$ is centered at $x$, and $\sigma_{\mathbf{D}}(\omega)\geq \sigma_{\mathbf{D}}(\nu_0)$. Step 3 then implies that
\[
A_{X,\Delta}(\omega)\geq \lambda\sigma_{\mathbf{D}}(\omega)\geq \lambda\sigma_{\mathbf{D}}(\nu_0)=A_{X,\Delta}(\nu_0).
\]
After rescaling, this proves that $\lct(X_x,\Delta_x,\mathfrak{a}_{\bullet}(\nu_0))=A_{X,\Delta}(\nu_0)$.

By \cite[Theorem 1.1]{Xu20}, choose a quasi-monomial minimizer $\omega$, normalized by $\omega(\mathfrak{a}_{\bullet}(\nu_0))=1$. Since we have $\omega\geq \nu_0$ on $R$ and $A_{X,\Delta}(\omega)=A_{X,\Delta}(\nu_0)$, we obtain
\[
\lambda\leq \frac{A_{X,\Delta}(\omega)}{\sigma_{\mathbf{D}}(\omega)}\leq \frac{A_{X,\Delta}(\nu_0)}{\sigma_{\mathbf{D}}(\nu_0)}=\lambda.
\]
Thus, the quasi-monomial valuation $\omega$ computes $\lct_{\sigma}(X,\Delta,\mathbf{D})$.
\end{proof}

\begin{proof}[Proof of Corollary \ref{coro}]
Assume first that $(X,\Delta,\mathbf{D})$ is coupled pklt, and let $\lambda\coloneqq \lct_{\sigma}(X,\Delta,\mathbf{D})$. Let $\nu_0$ be a quasi-monomial minimizer by Theorem \ref{thrm:1}.

Choose a log smooth model $(Y,E)$ and a stratum of $E$ with generic point $\eta$ such that $\nu_0$ belongs to the relative interior of $\mathrm{QM}_\eta(Y,E)$. The function $\phi(\nu)\coloneqq A_{X,\Delta}(\nu)-\sigma_{\mathbf{D}}(\nu)$ is finite and homogeneous of degree one. Each $\sigma_{\nu}(D_i)$ is concave in the weights. Therefore, $\phi$ is convex and locally Lipschitz near $\nu_0$.

Choose $L>0$ and a neighborhood $U$ of $\nu_0$ in this relative interior such that $|\phi(\nu_0)-\phi(\omega)|\leq L\|\nu_0-\omega\|$ for $\omega\in U$. Since $(X,\Delta,\mathbf{D})$ is coupled pklt, there exists $a>0$ such that $A_{X,\Delta}(E)-\sigma_{\mathbf{D}}(E)\geq a$ for every prime divisor $E$ over $X$. Then we have $\phi(\ord_F)\geq a$ for a prime divisor $F$ over $X$ and $q\|\nu_0-\omega\|<\frac{a}{2L}$. Since $\phi$ is homogeneous, we have
\[
q\phi(\nu_0)\geq \phi(q\omega)-Lq\|\nu_0-\omega\|>\frac{a}{2}>0.
\]
Thus, $A_{X,\Delta}(\nu_0)>\sigma_{\mathbf{D}}(\nu_0)$ and $\lambda>1$.

Conversely, assume that $\lct_{\sigma}(X,\Delta,\mathbf{D})>1$. Choose a real number $c$ with $1<c<\lct_{\sigma}(X,\Delta,\mathbf{D})$, and choose $m>0$ such that $m(K_X+\Delta)$ is Cartier. Then $\sigma_{\mathbf{D}}(E)\leq \frac{A_{X,\Delta}(E)}{c}$ for every prime divisor $E$ over $X$. By Lemma \ref{lem-logdiscr}, we have
\[
A_{X,\Delta}(E)-\sigma_{\mathbf{D}}(E)\geq \left(1-\frac{1}{c}\right)A_{X,\Delta}(E)\geq \frac{c-1}{cm}>0.
\]
This bound is independent of $E$ and hence, $(X,\Delta,\mathbf{D})$ is coupled pklt.
\end{proof}

Corollary \ref{coro} also shows that one can choose effective $\Q$-divisors $G_i\sim_{\Q}D_i$ simultaneously so that $(X,\Delta+\sum_i G_i)$ is klt.

\begin{corollary}
    Let \((X,\Delta,\mathbf{D})\) be a coupled potentially klt tuple, where \(\mathbf D=(D_1,\dots,D_r)\). Assume that all \(D_i\) are big. Then there exist effective \(\Q\)-divisors \(G_i\sim_{\Q} D_i\) such that $(X,\Delta+\sum_{i=1}^r G_i)$ is klt.
\end{corollary}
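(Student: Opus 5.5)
The plan is to combine Corollary \ref{coro} with Proposition \ref{prop:sum}, applied to a slightly enlarged tuple. Since $(X,\Delta,\mathbf D)$ is coupled pklt, Corollary \ref{coro} gives $\lambda\coloneqq\lct_{\sigma}(X,\Delta,\mathbf D)>1$. Fix a rational number $1<c<\lambda$ and consider the rescaled tuple $c\mathbf D=(cD_1,\dots,cD_r)$. By homogeneity of each $\sigma_\nu(D_i)$ in $D_i$, we have $\sigma_{c\mathbf D}=c\,\sigma_{\mathbf D}$. Hence
\[
\lct_{\sigma}(X,\Delta,c\mathbf D)=\frac{\lambda}{c}>1 .
\]

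Next we translate this threshold bound into triviality of the asymptotic multiplier ideal. By Lemma \ref{lem-eq}, $\lct(X,\Delta;\mathfrak a_\bullet(c\mathbf D))=\lambda/c>1$. By the standard characterization of the log canonical threshold of a graded sequence (cf. \cite[Theorem 3.7]{CJKL}, as already used in the proof of Proposition \ref{thickening}), we get $\mathcal J(X,\Delta;\mathfrak a_m(c\mathbf D)^{1/m})=\OO_X$ for $m$ sufficiently divisible. Concretely, if this ideal were proper, some divisorial valuation $E$ would satisfy
\[
A_{X,\Delta}(E)\le \frac{\ord_E(\mathfrak a_m(c\mathbf D))}{m},
\]
and the right-hand side is at most $\sigma_{c\mathbf D}(E)$ only up to the usual $\inf$ versus limit issue. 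I would handle this issue by using that $m\mapsto \ord_E(\mathfrak a_m)/m$ tends to $\sigma_{c\mathbf D}(E)$ together with the definition as a maximum over $m$. Thus $\mathcal J(X,\Delta;\|c\mathbf D\|)=\OO_X$. The $c\mathbf D$ are big, so Proposition \ref{prop:sum} applies. It yields effective $\Q$-divisors $D_j'\sim_{\Q} cD_j$ with $\mathcal J(X,\Delta+\sum_j D_j')=\OO_X$, that is, $(X,\Delta+\sum_j D_j')$ is klt.

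Finally set $G_j\coloneqq \frac1c D_j'\sim_{\Q}D_j$, which is effective. Since $G_j\le D_j'$ and $\Delta+\sum_j G_j\le \Delta+\sum_j D_j'$, the pair $(X,\Delta+\sum_j G_j)$ is klt as well. Note that the scaling by $c>1$ is not strictly necessary for the klt conclusion, since triviality of $\mathcal J(X,\Delta;\|\mathbf D\|)$ already suffices. However, it gives slack which makes the passage from $\lct>1$ to a trivial multiplier ideal cleaner: the condition $\lct>1$ only gives $\mathcal J$ trivial at exponent $1<\lambda$.

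The main obstacle is the middle step: rigorously deducing $\mathcal J(X,\Delta;\|\mathbf D\|)=\OO_X$ from $\lct(X,\Delta;\mathfrak a_\bullet(\mathbf D))>1$. I would first try invoking \cite[Theorem 3.7]{CJKL} directly, as in Step 1 of Proposition \ref{thickening}. Failing that, I would argue via a single log resolution computing $\mathcal J(X,\Delta;\mathfrak a_m^{1/m})$ and the inequality $\ord_E(\mathfrak a_m)/m\le (1+\delta)\sigma_{\mathbf D}(E)$. This inequality would have to be achieved for the finitely many relevant $E$ by choosing $m$ divisible enough.
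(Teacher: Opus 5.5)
Your proposal is correct and follows the paper's route. Corollary \ref{coro} gives $\lct_{\sigma}(X,\Delta,\mathbf D)>1$, then Lemma \ref{lem-eq} together with \cite[Theorem 3.7]{CJKL} gives a trivial asymptotic multiplier ideal, and Proposition \ref{prop:sum} produces the $G_i$. The rescaling by $c>1$ and the fallback sketch in your last paragraph are unnecessary, since the paper applies these steps directly to $\mathbf D$ to get $\mathcal J(X,\Delta;\|\mathbf D\|)=\OO_X$, but the rescaling does no harm.
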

\begin{proof}
    By Corollary \ref{coro}, we have \(\lct_{\sigma}(X,\Delta,\mathbf{D})>1\). Lemma \ref{lem-eq} and \cite[Theorem 3.7]{CJKL} give \(\mathcal{J}(X,\Delta;\|\mathbf{D}\|)=\OO_X\). By Proposition \ref{prop:sum}, we have effective divisors $G_i\sim_{\Q} D_i$ with $\mathcal{J}(X,\Delta+\sum_i G_i)=\OO_X$. Hence, $(X,\Delta+\sum_i G_i)$ is klt.
\end{proof}

\subsection{Geometric generic fibers}\label{subsect-ggf}
In this subsection, we establish Corollary \ref{coro2}. For a morphism $f\colon X\to S$ and a point $s\in S$, write $X_s\coloneqq X\times_S \Spec k(s)$. For a geometric point $\overline{s}\to S$, write $X_{\overline{s}}\coloneqq X\times_{S} \Spec \overline{k(s)}$.  Write $D_{i,s}$ and $\Delta_s$ for the corresponding restrictions.

\begin{proof}[Proof of Corollary \ref{coro2}]
Let $\nu\in \mathrm{Val}^*_{X_{\overline{\eta}}}$ be a quasi-monomial valuation computing $\mathrm{lct}_{\sigma}(X_{\overline{\eta}},\Delta_{\overline{\eta}},\mathbf{D}_{\overline{\eta}})$, and let $f\colon \left(X',E\coloneqq \sum^r_{i=1} E_i\right)\to (X_{\overline{\eta}},\Delta_{\overline{\eta}})$ be a log smooth model of $(X,\Delta)$ such that $\nu\in \mathrm{QM}(X',E)$. After generic finite base change of $S$, we can descend $\nu\in \mathrm{QM}(X',E)$ to $\nu_S\in \mathrm{QM}(X'_S,E_S)$ (cf. \cite[Remark 2.8]{KL25}).

Let $\nu_{i,S}\coloneqq \lambda_i \mathrm{ord}_{E_{i,S}}\in \mathrm{QM}(X'_S,E_S)$ be a sequence of divisorial valuations, and let $f_i\colon X'_{iS}\to X'_S$ be a toroidal blowup such that $E_{iS}$ is a prime divisor on $X'_{iS}$. The varieties and morphisms can be illustrated by the following diagram:
$$
\begin{tikzcd}
X'_{i,s}\ar[hook]{d}\ar["f_{i,s}"]{r}& (X'_{s},E_s) \ar[hook]{d}\ar{r}& (X_{s},\Delta_s)\ar[hook]{d} \\
X'_{i,S} \ar["f_i"]{r}& (X'_S,E_S) \ar{r}& (X,\Delta) \\
X'_{i,\overline{\eta}} \ar[hook]{u}\ar["f_{i,\overline{\eta}}"]{r}& (X',E) \ar[hook]{u}\ar{r}& (X_{\overline{\eta}},\Delta_{\overline{\eta}})\ar[hook]{u}
\end{tikzcd}
$$
By \cite[Theorem 1.1]{Jun26}, we obtain that
\begin{equation} \label{imrn} 
\mathrm{vol}(D_{i,\overline{\eta}})=\mathrm{vol}(D_{i,s})\text{ for }s\in S'.
\end{equation}
We claim that
\begin{equation} \label{michigan} 
\sigma_{E_{j,\overline{\eta}}}(D_{i,\overline{\eta}})\le \sigma_{E_{j,s}}(D_{i,s}) \text{ for every }i,j\text{ and }s\in S'.
\end{equation}
Indeed, if $a>\sigma_{E_{j,s}}(D_{j,s})$, then
$$
\begin{aligned}
\mathrm{vol}(f^*_{j,\overline{\eta}}D_{\overline{\eta}}-aE_{j,\overline{\eta}})&\le \mathrm{vol}(f^*_{j,s}D_{i,s}-aE_{j,s}) & (1)
\\ & <\mathrm{vol}(f^*_{i,s}D_{j,s})& (2)
\\ &=\mathrm{vol}(D_{i,s})& (3)
\\ &=\mathrm{vol}(D_{i,\overline{\eta}})& (4)
\\ &=\mathrm{vol}(f^*_{j,\overline{\eta}}D_{i,\overline{\eta}}),& (5)
\end{aligned}
$$
where
\begin{itemize}
    \item we used \cite[Theorem 12.8]{Har77} in (1),
    \item (2) follows from \cite[Proposition 2.1]{FKL16},
    \item (3), (5) is due to the invariance of volume under birational morphisms, and
    \item (4) follows from (\ref{imrn}).
\end{itemize}
Therefore, by \cite[Proposition 2.1]{FKL16} again, $a>\sigma_{E_{j\overline{\eta}}}(D_{i\overline{\eta}})$, and we proved (\ref{michigan}). Considering \cite[Corollary 2.13]{Kim25} gives $\sigma_{\mathbf{D}_{\overline{\eta}}}(\nu)\le \sigma_{\mathbf{D}_s}(\nu_s)$.

Hence, by \cite[Proposition 2.5]{Kim25},
$$ \mathrm{lct}_{\sigma}(X_{\overline{\eta}},\Delta_{\overline{\eta}},\mathbf{D}_{\overline{\eta}})=\frac{A_{X_{\overline{\eta}},\Delta_{\overline{\eta}}}(\nu)}{\sigma_{\mathbf{D}_{\overline{\eta}}}(\nu)}\ge \frac{A_{X_s,\Delta_s}(\nu_s)}{\sigma_{\mathbf{D}_s}(\nu_s)}>1,$$
where the last inequality follows from Corollary \ref{coro}. Hence, by Corollary \ref{coro} again, $(X_{\overline{\eta}},\Delta_{\overline{\eta}},\mathbf{D}_{\overline{\eta}})$ is coupled pklt.
\end{proof}

\subsection{Applications to varieties of pklt type}\label{subsect-appl}
We first reduce the applications to the $\Q$-factorial case.
\begin{lemma}\label{lem:qfactorial-reduction}
Let $(X,\Delta)$ be a projective pklt pair. There exists a projective small birational morphism \(q\colon X^{\mathrm q}\to X\) such that $X^{\mathrm q}$ is $\Q$-factorial. If $\Delta^{\mathrm q}$ is the strict transform of $\Delta$, then $(X^{\mathrm q},\Delta^{\mathrm q})$ is pklt. Moreover, for any $\Q$-Cartier $\Q$-divisors $D_1,\dots,D_r$ on $X$,
\[
R(X;D_1,\dots,D_r)
\simeq
R(X^{\mathrm q};q^*D_1,\dots,q^*D_r).
\]
\end{lemma}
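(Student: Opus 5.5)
Since $(X,\Delta)$ is pklt, we first produce a klt pair on $X$: because $-(K_X+\Delta)$ is pseudoeffective and the potential log discrepancies are bounded below by $\varepsilon$, the pair $(X,\Delta)$ is itself klt. Indeed, taking $\nu=\ord_E$ gives $A_{X,\Delta}(E)\geq \varepsilon+\sigma_E(-(K_X+\Delta))>0$. This is all we need to invoke the existence of a small $\Q$-factorialization. Concretely, take a log resolution $g\colon Y\to X$ and run an MMP over $X$ for $K_Y+g^{-1}_*\Delta+(1-\delta)\Exc(g)$ with $0<\delta\ll 1$, as in \cite{BCHM}. Alternatively, cite the standard corollary that every klt pair admits a small projective $\Q$-factorial modification. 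This gives a small projective birational $q\colon X^{\mathrm q}\to X$ with $X^{\mathrm q}$ $\Q$-factorial.

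Next we show that pklt passes to $(X^{\mathrm q},\Delta^{\mathrm q})$. Since $q$ is small, $K_{X^{\mathrm q}}+\Delta^{\mathrm q}=q^*(K_X+\Delta)$: the two sides agree away from a codimension-two set and the right side is $\Q$-Cartier. Hence $A_{X^{\mathrm q},\Delta^{\mathrm q}}(E)=A_{X,\Delta}(E)$ for every prime divisor $E$ over $X$, using that $X^{\mathrm q}$ and $X$ have the same function field, so the prime divisors over them coincide. For asymptotic orders, we use that $\sigma_E$ is a birational invariant under pullback, $\sigma_E(q^*L)=\sigma_E(L)$ for pseudoeffective $\Q$-Cartier $L$. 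For big $L$ this holds because $H^0(X^{\mathrm q},mq^*L)=H^0(X,mL)$ and $\mathfrak b(|mq^*L|)=\mathfrak b(|mL|)\OO_{X^{\mathrm q}}$. For pseudoeffective $L$ we take the limit with $A$ ample on $X$ and $q^*A$ nef and big. The main subtlety is that $\sigma$ was defined via ample perturbations, so on $X^{\mathrm q}$ we must check that perturbing by $q^*A$ (big and nef) gives the same limit as perturbing by an ample $A'$. I would handle this by \cite[III.1]{Nak}: the limit may be computed using any big divisor whose perturbations decrease. Concretely, $q^*A$ can be written as $A'+N$ with $N\geq0$ effective, which gives inequalities in both directions. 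Applying this to $L=-(K_X+\Delta)$ and combining with the equality of log discrepancies gives the uniform bound $\varepsilon$ on $X^{\mathrm q}$. We also need $-(K_{X^{\mathrm q}}+\Delta^{\mathrm q})$ pseudoeffective, which follows because pullback preserves pseudoeffectivity.

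Finally, for the multisection rings we use that $q$ is small, birational, and $X$ is normal. For any $\Q$-Cartier $\Q$-divisor $D$ on $X$, a rational function $f$ satisfies $\mathrm{div}(f)+q^*D\geq0$ on $X^{\mathrm q}$ if and only if $\mathrm{div}(f)+D\geq0$ on $X$. The forward direction follows by pushing forward; for the reverse, the divisors on $X^{\mathrm q}$ and on $X$ correspond bijectively, since no divisor is contracted and $q^*D$ is the strict transform of $D$. Hence $H^0(X^{\mathrm q},\sum m_iq^*D_i)=H^0(X,\sum m_iD_i)$ for every $(m_1,\dots,m_r)$, compatibly with multiplication inside $k(X)$. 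This gives the graded ring isomorphism. The step requiring the most care is the invariance of $\sigma_E$ for merely pseudoeffective divisors in the second paragraph; the rest is formal.
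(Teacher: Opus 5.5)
Your proposal is correct and is essentially the paper's argument: take the small $\Q$-factorialization from \cite[Corollary 1.4.3]{BCHM}, use $K_{X^{\mathrm q}}+\Delta^{\mathrm q}=q^*(K_X+\Delta)$, and note that the log discrepancies and the asymptotic orders of the anticanonical divisor agree on every prime divisor over $X$. You also supply details the paper leaves implicit: that pklt forces klt, so that BCHM applies, and the ring isomorphism via $H^0(X^{\mathrm q},\sum_i m_iq^*D_i)=H^0(X,\sum_i m_iD_i)$ inside $k(X)$; the one imprecision is in the perturbation comparison, where $q^*A\sim_{\Q}A'+N$ gives only $\sigma_E(L)\le\sigma_E(q^*L)$, and the routine fix for the reverse inequality is to use that $cA'-q^*A$ is ample for $c\gg 0$.
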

\begin{proof}
    By \cite[Corollary 1.4.3]{BCHM}, there exists a projective small $\Q$-factorialization $q$. Since $q$ is small, $K_{X^{\rm q}}+\Delta^{\rm q}=q^{\ast}(K_X+\Delta)$, and the prime divisors on $X$ and $X^{\rm q}$ correspond. Both the log discrepancies and the asymptotic orders of the anticanonical divisor coincide on every prime divisor over the two varieties. Hence, $(X^{\rm q},\Delta^{\rm q})$ is pklt.
\end{proof}

The following proposition relates big divisors on a pklt pair to klt adjoint divisors. This will be used in the applications to finite generation and birational Zariski decompositions.

\begin{proposition}\label{prop:uniform-adjointization}
Let $(X,\Delta)$ be a projective pklt pair, and let $D_1,\dots,D_r$ be big $\Q$-Cartier $\Q$-divisors. Then there exist a rational number $\lambda>0$ and effective big $\Q$-Cartier $\Q$-divisors $G_1,\dots,G_r$ such that $(X,\Delta+G_i)$ is klt and $K_X+\Delta+G_i\sim_{\Q}\lambda D_i$ for every $i$.
\end{proposition}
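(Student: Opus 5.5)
We prove the statement by using the pklt hypothesis on $-(K_X+\Delta)$ to absorb it into a small multiple of each $D_i$. Write $L\coloneqq -(K_X+\Delta)$, which is pseudoeffective. The idea is to write
\[
\lambda D_i-(K_X+\Delta)=L+\lambda D_i .
\]
For small rational $\lambda>0$, the coupled tuple $(X,\Delta,(L,\lambda D_i))$ should still be coupled pklt. We then produce effective representatives of $L+\lambda D_i$ that keep the pair klt.

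\textbf{Step 1: the tuples $(L,\lambda D_i)$ are coupled pklt.} Pklt gives $\varepsilon>0$ with $A_{X,\Delta}(E)-\sigma_E(L)\geq\varepsilon$ for all $E$. Since each $D_i$ is big, Lemma \ref{lem:sup} gives $C>0$ with $\sigma_E(D_i)\leq C\,A_{X,\Delta}(E)$ for all $E$ and all $i$. Hence
\[
A_{X,\Delta}(E)-\sigma_E(L)-\lambda\sigma_E(D_i)\geq \varepsilon-\lambda C A_{X,\Delta}(E).
\]
The right-hand side is not bounded below uniformly as $A_{X,\Delta}(E)\to\infty$, so a convex-combination argument is needed instead. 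Pklt implies $\sigma_E(L)\leq A_{X,\Delta}(E)-\varepsilon\le (1-\varepsilon')A_{X,\Delta}(E)$, using Lemma \ref{lem-logdiscr} in the form $A_{X,\Delta}(E)\geq 1/m$ with $\varepsilon'=\varepsilon m$ (or rather $\min\{1,\ldots\}$ appropriately). Then
\[
\sigma_E(L)+\lambda\sigma_E(D_i)\le (1-\varepsilon'+\lambda C)A_{X,\Delta}(E).
\]
So for rational $0<\lambda<\varepsilon'/(2C)$ we get $\lct_\sigma(X,\Delta,(L,\lambda D_i))\geq (1-\varepsilon'/2)^{-1}>1$. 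By Corollary \ref{coro}, these tuples are coupled pklt, uniformly in $i$.

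\textbf{Step 2: produce the $G_i$.} The preceding corollary, about simultaneous klt representatives, requires all members of the tuple to be big, but $L$ is only pseudoeffective. I would fix this by replacing the pair of divisors by their sum $B_i\coloneqq L+\lambda D_i$, which is big. Subadditivity of $\sigma$ gives $\sigma_E(B_i)\leq\sigma_E(L)+\lambda\sigma_E(D_i)$, so $\lct_\sigma(X,\Delta,(B_i))>1$ as well, and the one-element tuple $(B_i)$ consists of a big divisor. Applying that corollary, with Lemma \ref{lem-eq}, \cite[Theorem 3.7]{CJKL} and Proposition \ref{prop:sum} for $r=1$, yields an effective $G_i\sim_{\Q}B_i$ with $(X,\Delta+G_i)$ klt. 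Then
\[
K_X+\Delta+G_i\sim_{\Q}K_X+\Delta+L+\lambda D_i=\lambda D_i ,
\]
and $G_i$ is big because $B_i$ is big.

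\textbf{Main obstacle.} The delicate point is Step 1: converting the additive pklt gap $\varepsilon$ into a multiplicative gap $\sigma_E(L)\leq(1-\varepsilon')A_{X,\Delta}(E)$ that is uniform over all $E$. This uses $A_{X,\Delta}(E)$ being bounded below (Lemma \ref{lem-logdiscr}) together with an upper bound on $A_{X,\Delta}(E)$. Large $A_{X,\Delta}(E)$ is harmless since $\varepsilon/A_{X,\Delta}(E)$ is then small. I would argue via $\lct_\sigma(X,\Delta,(L))>1$, which holds by Corollary \ref{coro} applied to the one-element tuple $(L)$; this gives $\sigma_E(L)\leq c^{-1}A_{X,\Delta}(E)$ with $c>1$, which is exactly the needed multiplicative gap. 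A single $\lambda$ works for all $i$ because $C$ is taken as the maximum over the finitely many $D_i$.
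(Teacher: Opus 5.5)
Your final argument is correct and is essentially the paper's. Like the paper, you get a uniform multiplicative gap $\sup_\nu \sigma_\nu(L)/A_{X,\Delta}(\nu)<1$, bound $\sup_\nu\sigma_\nu(D_i)/A_{X,\Delta}(\nu)$ by Lemma \ref{lem:sup}, take $\lambda$ small, use subadditivity for the big divisor $L+\lambda D_i$, and conclude with Lemma \ref{lem-eq}, \cite[Theorem 3.7]{CJKL} and Proposition \ref{prop:sum}. The only difference is the source of the gap: you apply Corollary \ref{coro} to the one-element tuple $(L)$, whose coupled pklt condition is exactly pklt, while the paper cites \cite[Proposition 4.1]{CJKL}.

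Discard your first attempt via Lemma \ref{lem-logdiscr}, though. The inequality $A-\varepsilon\le(1-\varepsilon m)A$ is equivalent to $A\le 1/m$, so a lower bound on $A_{X,\Delta}(E)$ points the wrong way. Also, large $A_{X,\Delta}(E)$ is not harmless: it is precisely where an additive gap yields no multiplicative one.
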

\begin{proof}
By \cite[Proposition 4.1]{CJKL}, there exists $\varepsilon>0$ such that $(X,\Delta,-(1+\varepsilon)(K_X+\Delta))$ is a pklt triple. Lemma \ref{lem:div} then gives 
\[
a\coloneqq \sup_{\nu\in \Val_X^{\ast}} \frac{\sigma_{\nu}(-(K_X+\Delta))}{A_{X,\Delta}(\nu)}\leq \frac{1}{1+\varepsilon}<1.
\]
For each $i$, Lemma \ref{lem:sup} shows that $b_i\coloneqq \sup_{\nu} \frac{\sigma_{\nu}(D_i)}{A_{X,\Delta}(\nu)}$ is finite. Choose a rational number $\lambda>0$ such that $a+\lambda b_i<1$ for every $i$. Since the divisor $-(K_X+\Delta)+\lambda D_i$ is big, we have
\[
\frac{\sigma_{\nu}(-(K_X+\Delta)+\lambda D_i)}{A_{X,\Delta}(\nu)}\leq a+\lambda b_i<1.
\]
By Lemma \ref{lem-eq}, \cite[Theorem 3.7]{CJKL} and Proposition \ref{prop:sum}, there exists an effective divisor $G_i\sim_{\Q} -(K_X+\Delta)+\lambda D_i$ such that $(X,\Delta+G_i)$ is klt. 
\end{proof}

\begin{proof}[Proof of Theorem \ref{thm:BZD}]
    Let $D$ be a big $\Q$-Cartier $\Q$-divisor on $X$. By \cite[Theorem 3.3]{KM98} and \cite[Proof of Theorem 2.4]{KL25}, there exists a good $D$-minimal model $X\dashrightarrow Y$. Let $D_Y$ be the strict transform of $D$, and let $f\colon X'\to X$ and $g\colon X'\to Y$ be a common resolution. Then, $f^*D=g^*D_Y+E$ for some effective $g$-exceptional $\Q$-Cartier $\Q$-divisor $E$ on $X'$, and this is a birational Zariski decomposition of $D$ with semiample positive part.
\end{proof}

\begin{lemma}\label{lem-bzd}
    Let \(f\colon Y\to X\) be a projective birational morphism, and suppose that $f^{\ast}D=P+N$ is a birational Zariski decomposition with semiample positive part. Then $\sigma_{\nu}(D)=\nu(N)$ for every valuation \(\nu\in \Val_X\).
\end{lemma}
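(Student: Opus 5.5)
Since $f$ is birational, $\Val_Y=\Val_X$ and $\sigma_\nu(D)=\sigma_\nu(f^*D)$; this invariance was already used in the proof of Lemma \ref{lem:div}. So we may compute everything on $Y$. Replacing $D$ by a sufficiently divisible multiple and using homogeneity of $\sigma_\nu$ and of $\nu(N)$, we may assume $P$ and $N$ are integral, $P$ is basepoint free, and $H^0(Y,mP)\to H^0(Y,mf^*D)$ is an isomorphism for all $m\ge1$. Since $D$ is big, $\sigma_\nu(D)=\nu(\|f^*D\|)=\inf_m \frac{1}{m}\nu(\mathfrak b(|mf^*D|))$.

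The key step is to identify the base ideals. The isomorphism of sections gives $|mf^*D|=|mP|+mN$, hence
\[
\mathfrak b(|mf^*D|)=\mathfrak b(|mP|)\cdot\OO_Y(-mN).
\]
Since $mP$ is basepoint free, $\mathfrak b(|mP|)=\OO_Y$, so $\mathfrak b(|mf^*D|)=\OO_Y(-mN)$. Consequently
\[
\frac{1}{m}\nu(\mathfrak b(|mf^*D|))=\frac{1}{m}\nu(mN)=\nu(N)
\]
for every $m$, and the infimum equals $\nu(N)$.

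The only point needing care is the claim that every member of $|mf^*D|$ has the form $G+mN$ with $G\in|mP|$, so that the base ideal factors as above. This is exactly the content of the surjectivity of $H^0(Y,mP)\hookrightarrow H^0(Y,mf^*D)$, where the inclusion is multiplication by the canonical section of $mN$. One also has to pass from sufficiently divisible $m$ to the limit defining $\nu(\|f^*D\|)$; the infimum over a divisible subsequence gives the same value because the sequence is superadditive. I also need to check that Nakayama's $\sigma_\nu$ agrees with $\nu(\|\cdot\|)$ for big divisors, which is the cited fact \cite[III.1.5]{Nak}.
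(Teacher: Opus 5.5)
Your proof is correct and follows the paper's argument. Both use the isomorphism $H^0(Y,mP)\simeq H^0(Y,mf^{\ast}D)$ and basepoint freeness of $mP$ to get $\mathfrak b(|mf^{\ast}D|)=\OO_Y(-mN)$ for sufficiently divisible $m$, then read off $\sigma_\nu(D)=\sigma_\nu(f^{\ast}D)=\nu(N)$ from the infimum formula for big divisors. One small wording point: what justifies restricting to a divisible subsequence is that $m\mapsto\nu(\mathfrak b(|mf^{\ast}D|))$ is \emph{subadditive}, from $\mathfrak b_m\mathfrak b_n\subseteq\mathfrak b_{m+n}$, which gives $\frac{1}{km}\nu(\mathfrak b_{km})\le\frac{1}{m}\nu(\mathfrak b_m)$; "superadditive" is the wrong word here.
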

\begin{proof}
    Choose a sufficiently divisible positive integer \(m\) such that \(mP\) is basepoint free and \(mN\) is integral. Since we have $H^0(Y,mP)\simeq H^0(Y,mf^{\ast}D)$,
    the base ideal of \(|mf^{\ast}D|\) is \(\OO_Y(-mN)\). The same equality holds after replacing $m$ by any sufficiently divisible positive multiple. Therefore, for every $\nu\in\Val_X$,
    \[
    \frac{1}{m}\nu\bigl(\mathfrak b(|mf^*D|)\bigr)=\nu(N).
    \]
    Since $D$ is big, we obtain
    \[
    \sigma_\nu(D)=\sigma_\nu(f^*D)=\inf_m\frac{1}{m}\nu\bigl(\mathfrak b(|mf^*D|)\bigr)=\nu(N).\qedhere
    \]
\end{proof}

\begin{proof}[Proof of Corollary \ref{cor:divisorial}]
    By Theorem \ref{thm:BZD}, each \(D_i\) admits a birational Zariski decomposition with semiample positive part. Take a common resolution \(f\colon Y\to X\) on which $f^{\ast}D_i=P_i+N_i$
    for all \(i\), where \(P_i\) is semiample and \(N_i\geq 0\). Set $N\coloneqq \sum_{i=1}^r N_i$ and $K_Y+\Delta_Y=f^{\ast}(K_X+\Delta)$. After taking a higher model, we may assume that $\Supp(\Delta_Y)\cup \Supp(N)$ has simple normal crossings. 
    
    By Lemma \ref{lem-bzd}, we have \(\sigma_{\mathbf{D}}(\nu)=\nu(N)\) for every valuation \(\nu\). Consequently,
    \[
    \lct_{\sigma}(X,\Delta,\mathbf{D})=\inf_{\nu\in \Val_Y^{\ast}}\frac{A_{Y,\Delta_Y}(\nu)}{\nu(N)}=\lct(Y,\Delta_Y;N).
    \]
    Since $\Supp(\Delta_Y)\cup\Supp(N)$ has simple normal crossings, the threshold is
    \[
    \lct_{\sigma}(X,\Delta,\mathbf D)=\min_{F\subseteq\Supp(N)}\frac{A_{Y,\Delta_Y}(F)}{\mult_F N}.
    \]
    Hence, it is rational and, when finite, is computed by a prime divisor on $Y$.
\end{proof}

\begin{proof}[Proof of Theorem \ref{thm:fg}]
Choose an effective $\Q$-divisor \(\Delta\) such that \((X,\Delta)\) is potentially klt. By Lemma \ref{lem:qfactorial-reduction}, we may assume that $X$ is $\Q$-factorial. Proposition \ref{prop:uniform-adjointization} gives $\lambda\in\Q_{>0}$ and effective big divisors $G_i$ such that $(X,\Delta+G_i)$ is klt and $K_X+\Delta+G_i\sim_{\Q}\lambda D_i$. Each boundary $\Delta+G_i$ is big, and therefore, the multigraded adjoint ring
\[
R\bigl(X;K_X+\Delta+G_1,\dots,K_X+\Delta+G_r\bigr)
\]
is finitely generated by \cite[Theorem 3.5(2)]{KKL}. The assertion follows from \cite[Lemma 3.1]{KKL}.
\end{proof}

\subsection{The boundary of the movable cone}
We conclude by showing that a projective $\Q$-factorial variety of pklt type is a Mori dream space when every nonzero class in the closed movable cone is big. The following convex geometric lemma will be used to prove the rational polyhedrality of the movable cone.

\begin{lemma}\label{lem:fdrvs}
    Let \(V_{\Q}\) be a finite dimensional $\Q$-vector space, and let \(V\coloneqq V_{\Q}\otimes_{\Q} \R\). Let $C\subseteq V$ be a closed pointed convex cone, and let \(\ell\in V_{\Q}^{\ast}\) be strictly positive on \(C\setminus \{0\}\). We also write $\ell$ for the real linear extension to $V.$ Assume that \(C\) is locally rational polyhedral at every point of
    \[
    S\coloneqq \{x\in C\mid \ell(x)=1\}.
    \]
    Then \(C\) is rational polyhedral.
\end{lemma}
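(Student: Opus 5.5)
The plan is to reduce to the compact slice $S$ and use compactness. Since $\ell$ is strictly positive on $C\setminus\{0\}$ and $C$ is closed and pointed, the slice $S$ is compact. Indeed, on the unit sphere of $V$ (for any norm), $\ell$ attains a positive minimum $c$ on $C\cap\{\|x\|=1\}$. Hence $\|x\|\le 1/c$ for every $x\in S$, and $S$ is closed and bounded. Moreover $C=\R_{\ge0}\cdot S$, since every $x\in C\setminus\{0\}$ equals $\ell(x)\cdot(x/\ell(x))$.

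\emph{Local rational polyhedrality} at $x\in S$ will be used in the form: there are an open neighborhood $U_x$ of $x$ and a rational polyhedral cone $\sigma_x$ such that $C\cap U_x=\sigma_x\cap U_x$. By compactness, finitely many neighborhoods $U_{x_1},\dots,U_{x_N}$ cover $S$. The candidate is the cone generated by the rational rays of the $\sigma_{x_i}$ that actually lie in $C$. A cleaner route is to show that $C$ has finitely many extremal rays and that each is rational.

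Let $R$ be an extremal ray of $C$, and let $y$ be its unique point in $S$, so $y\in U_{x_i}$ for some $i$. Near $y$ the cone $C$ agrees with $\sigma_{x_i}$, so $y$ spans an extremal ray of $\sigma_{x_i}$. Here is why. If $y=a+b$ with $a,b\in\sigma_{x_i}$, we can scale $a$ and $b$ down and compare with $ty$ for small $t>0$. The points $ty+s(a-b)$ stay in $U_{x_i}$ after shrinking, because $U_{x_i}$ can be taken conical near $y$ by replacing it with $U_{x_i}\cap\R_{>0}U_{x_i}$. Extremality in $C$ then forces $a,b\in R$.

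Since $\sigma_{x_i}$ is rational polyhedral, its extremal rays are finitely many rational rays. Therefore the extremal rays of $C$ are contained in the finite union over $i$ of the rational extremal rays of the $\sigma_{x_i}$. A closed pointed convex cone in a finite dimensional space is the convex hull of its extremal rays (Krein--Milman applied to the compact convex set $S$, or Minkowski's theorem). So $C$ is generated by finitely many rational rays, and $C$ is rational polyhedral.

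The main obstacle is the localization step: passing from "$C$ agrees with $\sigma_{x_i}$ on a neighborhood of $y$" to "the extremal ray of $C$ through $y$ is an extremal ray of $\sigma_{x_i}$". The care needed is to ensure the local agreement respects the cone structure. I would handle this by shrinking $U_{x_i}$ to a small open ball $B$ around $y$, noting that a decomposition $y=\tfrac12(y+v)+\tfrac12(y-v)$ with $y\pm v\in\sigma_{x_i}$ can be rescaled to $y\pm\epsilon v\in B\cap\sigma_{x_i}\subseteq C$. Extremality of $y$ in $C$ then gives $v\in\R y$. This shows that $y$ generates an extremal ray of $\sigma_{x_i}$.
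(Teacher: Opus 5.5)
Your proposal is correct in substance and follows the paper's route. Both arguments use compactness of $S$, a finite subcover by neighborhoods on which $C$ agrees with a rational polyhedral cone, the fact that extreme points of $S$ (equivalently, extremal rays of $C$) are locally extreme for those rational cones, and Krein--Milman. The paper phrases the key step as ``every extreme point of $S$ in $U_x$ is a vertex of the rational polyhedron $C_x\cap\{\ell=1\}$.''

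One sub-claim needs a small repair. The local cone $\sigma_{x_i}$ need not be pointed. For example, take $C=\R_{\ge0}^2$ and $\ell(a,b)=a+b$; near $(1,0)$ you may take $\sigma=\{b\ge 0\}$. In that case ``$y$ spans an extremal ray of $\sigma_{x_i}$'' is false as stated, and so is the inclusion of the extremal rays of $C$ in the extremal rays of the $\sigma_{x_i}$.

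What your ball argument actually proves is $y\pm v\in\sigma_{x_i}\Rightarrow v\in\R y$. This means the minimal face of $\sigma_{x_i}$ containing $y$ is one-dimensional: either a rational ray or the rational line $\R y$. Since $\sigma_{x_i}$ has finitely many faces, this still gives the finiteness and rationality you need.

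Two other fixes also work. You can intersect each $\sigma_{x_i}$ with a rational pointed cone containing $C$. Or you can work in the slice $\{\ell=1\}$ as the paper does, where pointedness is never an issue.
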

\begin{proof}
    We may assume that $C\neq \{0\}$, and fix a Euclidean norm on $V$. The restriction of $\ell$ to the intersection of $C$ with the unit sphere has a minimum, which is positive value. Hence, $S$ is bounded and since $S$ is closed, $S$ is compact.

    For each $x\in S$, choose an open neighborhood $U_x\subseteq V$ and a rational polyhedral cone $C_x\subseteq V$ such that $C\cap U_x=C_x\cap U_x$. Every extreme point of $S$ contained in $U_x$ is a vertex of $C_x\cap \{\ell=1\}$. This polyhedron has only finitely many vertices, and every vertex is rational since $\ell$ is rational. A finite subcover of $\{U_x\cap S\}_{x\in S}$ therefore implies that $S$ has only finitely many extreme points, all of which belong to $V_{\Q}$. Since a compact convex set is the convex hull of its extreme points, $S$ is a rational polytope. Therefore, $C=\R_{\geq 0}S$ is rational polyhedral.
\end{proof}

\begin{proof}[Proof of Theorem \ref{thm:Mov cone}]
    Let $C\coloneqq \overline{\Mov}(X)$, and let $\pi\colon \Div_{\R}(X)\to N^1(X)_{\R}$ be the natural projection. Choose $\ell\in N^1(X)_{\Q}^{\ast}$ which is strictly positive on $\overline{\mathrm{Eff}}(X)\setminus \{0\}$, and extend $\ell$ linearly to $N^1(X)_{\R}$. Then the set $S\coloneqq \{\alpha\in C\mid \ell(\alpha)=1\}$ is compact, and is contained in $\Bigdiv(X)$.

    Fix $\alpha\in S$. Choose effective big $\Q$-divisors $D_1,\dots,D_r$ such that the image of $\mathcal{C}_{\alpha}\coloneqq \sum_{j=1}^r \R_{\geq 0}D_j$ under $\pi$ contains an open neighborhood of $\alpha$ and $\pi(\mathcal{C}_{\alpha})\setminus \{0\}\subseteq \Bigdiv(X)$. By Theorem \ref{thm:fg}, the ring $R(X;D_1,\dots,D_r)$ is finitely generated. By \cite[Corollary 3.4]{KKL}, the cone $\pi(\mathcal{C}_{\alpha})\cap C$ is a finite union of rational polyhedral cones. Since $\pi(\mathcal{C}_{\alpha})\cap C$ is convex, this intersection is also rational polyhedral. Thus, $C$ is locally rational polyhedral at $\alpha$. By Lemma \ref{lem:fdrvs}, we conclude that $C$ is rational polyhedral.

    Choose effective big \(\Q\)-divisors \(M_1,\dots,M_s\) whose numerical classes generate $C$. Let $\mathcal{C}\coloneqq \sum_{j=1}^s \R_{\geq 0}M_j$. The ring $R(X;M_1,\dots,M_s)$ is finitely generated by Theorem \ref{thm:fg}. For every nonzero $D\in \mathcal{C}\cap \Div_{\Q}(X)$ and every $\Q$-divisor $D'\equiv D$, the divisor $D'$ is big. Hence, again by Theorem \ref{thm:fg}, $R(X,D')$ is finitely generated and $D$ is gen in the sense of \cite[Definition 4.7]{KKL}. The numerical classes of the $M_j$ span $N^1(X)_{\R}$, and $\mathcal{C}$ contains an ample divisor. By \cite[Theorem 1.1]{KKL}, there are finitely many optimal models $f_i\colon X\dashrightarrow X_i$, with $X_i$ projective and $\Q$-factorial, such that every nonzero $\Q$-divisor in $\mathcal{C}$ has a nef transform on at least one $X_i$.
    
    Moreover, each $f_i$ is small. Indeed, let $D\in \mathcal{C}$ be a nonzero $\Q$-divisor for which $f_i$ is an optimal model, and set $D_i\coloneqq (f_i)_{\ast}D$. Since $D$ is big and $[D]\in C$, we have $N_{\sigma}(D)=0$. On a common resolution $p\colon W\to X$ and $q\colon W\to X_i$, write $p^{\ast}D=q^{\ast}D_i+F$, where $F\geq 0$ is $q$-exceptional. If $f_i$ contracts a prime divisor $G$ on $X$, then the strict transform of $G$ occurs in $F$ with positive coefficient. Since $D_i$ is nef and big, this coefficient equals $\sigma_G(D)$, contradicting $N_{\sigma}(D)=0$. Thus, $f_i$ is an isomorphism in codimension one. 

    Strict transform induces isomorphisms between the rational divisor class groups of $X$ and $X_i$ and preserves numerical equivalence by the negativity lemma; see \cite[Corollary 2.6]{KKL}. Since $X$ and $X_i$ are $\Q$-factorial, $\Pic(X_i)_{\Q}=N^1(X_i)_{\Q}$ follows from $\Pic(X)_{\Q}=N^1(X)_{\Q}$.

    Let $M_{j,i}$ be the strict transform of $M_j$ on $X_i$. The ring $R(X_i;M_{1,i},\ldots,M_{s,i})$ is isomorphic to $R(X;M_1,\ldots,M_s)$ and is therefore finitely generated. The numerical image of $\mathcal C_i\coloneqq\sum_{j=1}^s\R_{\ge0}M_{j,i}$ contains $\Nef(X_i)$. In particular, $\mathcal C_i$ contains an ample divisor. By \cite[Corollary 3.4]{KKL}, the cone $\Nef(X_i)$ is rational polyhedral and is generated by finitely many semiample classes. Since $\Pic(X_i)_{\Q}=N^1(X_i)_{\Q}$, every nef $\Q$-divisor on $X_i$ is semiample. Including the identity of $X$ among the $f_i$ also shows that $\Nef(X)$ is rational polyhedral and that every nef $\Q$-divisor on $X$ is semiample.

    Every rational class in $C$ has a nef transform on one of the $X_i$. The cones $f_i^*\Nef(X_i)$ are closed, and there are only finitely many such cones. Since rational classes are dense in $C$, we obtain $C\subseteq\bigcup_i f_i^*\Nef(X_i)$. The reverse inclusion follows since each $f_i$ is small and nef classes are limits of ample classes. Therefore,
    \[
    \overline{\Mov}(X)=\bigcup_i f_i^*\Nef(X_i).
    \]
    Together with $\Pic(X)_{\Q}=N^1(X)_{\Q}$, these properties show that $X$ is a Mori dream space by \cite[Definition 1.10]{HK}.
\end{proof}

\bibliographystyle{habbvr}
\bibliography{biblio}

\end{document}